\newcolumntype{P}[1]{>{\centering\arraybackslash}p{#1}}
\newtheorem{theorem}{Theorem}[section]
\newtheorem{lemma}[theorem]{Lemma}
\newtheorem{proposition}[theorem]{Proposition}
\newtheorem{corollary}[theorem]{Corollary}
\newtheorem{theoremx}{Theorem}
\theoremstyle{definition}
\newtheorem{assumption}[theorem]{Assumption}
\newtheorem{definition}[theorem]{Definition}
\theoremstyle{remark}
\newtheorem{remark}[theorem]{Remark}
\newtheorem{example}[theorem]{Example}
\DeclareFontFamily{U}{wncy}{}
\DeclareFontShape{U}{wncy}{m}{n}{<->wncyr10}{}
\DeclareSymbolFont{mcy}{U}{wncy}{m}{n}
\DeclareMathSymbol{\Sh}{\mathord}{mcy}{"58} 
\newcommand{\numberset}{\mathbb}
\newcommand{\C}{\numberset{C}}
\newcommand{\Q}{\numberset{Q}}
\newcommand{\Z}{\numberset{Z}}
\newcommand{\OF}{\mathcal{O}_F}
\newcommand{\V}{\boldsymbol{\mathrm{V}}}
\DeclareMathOperator{\Jac}{Jac}
\DeclareMathOperator{\Ta}{Ta}
\newcommand{\Taa}{\boldsymbol{\Ta}}
\newcommand{\T}{\boldsymbol{\mathrm{T}}}
\newcommand{\calR}{\mathcal{R}}
\newcommand{\A}{\boldsymbol{\mathrm{A}}}
\DeclareMathOperator{\sh}{Sh}
\DeclareMathOperator{\Hom}{Hom}
\DeclareMathOperator{\Char}{char}
\DeclareMathOperator{\ord}{ord}
\DeclareMathOperator{\Res}{res}
\DeclareMathOperator{\Gal}{Gal}
\DeclareMathOperator{\GL}{GL}
\DeclareMathOperator{\SL}{SL}
\DeclareMathOperator{\id}{id}
\DeclareMathOperator{\Tr}{Tr}
\DeclareMathOperator{\Aut}{Aut}
\DeclareMathOperator{\Frac}{Frac}
\DeclareMathOperator{\Cor}{cor}
\DeclareMathOperator{\res}{res}
\DeclareMathOperator{\loc}{loc}
\DeclareMathOperator{\Fr}{Fr}
\DeclareMathOperator{\hight}{\mathrm{ht}}
\DeclareMathOperator{\ks}{KS}
\newcommand{\KS}{\boldsymbol{\ks}}
\newcommand{\ac}{\mathrm{ac}}
\newcommand{\fs}{\mathrm{fs}}
\newcommand{\tr}{\mathrm{tr}}
\newcommand{\Gr}{\mathrm{Gr}}
\newcommand{\f}{\mathrm{f}}
\newcommand{\s}{\mathrm{s}}
\newcommand{\cyc}{\mathrm{cyc}}
\newcommand{\ta}{\mathrm{tame}}
\newcommand{\wi}{\mathrm{wild}}
\newcommand{\ur}{\mathrm{ur}}
\newcommand{\cont}{\mathrm{cont}}
\newcommand{\BK}{\mathrm{BK}}
\newcommand{\di}{\mathrm{div}}
\newcommand{\tors}{\mathrm{tors}}
\newcommand{\length}{\mathrm{length}}
\newcommand{\ga}{\alpha}
\newcommand{\gb}{\beta}
\newcommand{\e}{\varepsilon}
\newcommand{\gk}{\kappa}
\newcommand{\gl}{\lambda}
\newcommand{\gs}{\sigma}
\newcommand{\gt}{\tau}
\newcommand{\gL}{\Lambda}
\newcommand{\F}{\numberset{F}}
\newcommand{\Zp}{\numberset{Z}_p}
\newcommand{\m}{\mathfrak{m}}
\newcommand{\p}{\mathfrak{p}}
\newcommand{\h}{\mathfrak{h}}
\newcommand{\sfrak}{\mathfrak{s}}
\author{Francesco Zerman}
\address{UniDistance Suisse\\ Schinerstrasse 18, 3900, Brig, Switzerland}
\email{francesco.zerman@unidistance.ch}
\title{Quaternionic Kolyvagin systems and Iwasawa theory for Hida families}
\subjclass{11F80 (primary), 11R23 (secondary)}
\keywords{Kolyvagin systems, Iwasawa theory, Hida families}
\begin{document}

	\begin{abstract}
    	By applying a suitable Kolyvagin's descent, we perform the construction of a modified universal Kolyvagin system for the Galois representation attached to a Hida family of modular forms, starting from the big Heegner point Euler system of Longo--Vigni \cite{Longo-Vigni11:quaternion-algebras-Hida-families} built in towers of Shimura curves. We generalize the work of B{\"u}y{\"u}kboduk \cite{Buyukboduk:big-Heegner-point-kolyvagin-system} to a quaternionic setting, relaxing the classical \emph{Heegner hypothesis} on the tame conductor of the family. As a byproduct of this construction, we give a proof of one divisibility of the anticyclotomic Iwasawa main conjecture for Hida families.
	\end{abstract}

    \maketitle

\section{Introduction}

Let $N$ be a positive integer, $p\nmid 6N$ be a prime number and $\omega\colon (\Z/p\Z)^\times\to \boldsymbol{\mu}_{p-1}$ be the Teichm\"uller character. Let 
\begin{equation*}
	f(q)=\sum_{n=1}^\infty a_n(f)q^n\in S_k\big(\Gamma_0(Np),\omega^j\big)
\end{equation*}
be an ordinary $p$-stabilized newform, where $j\equiv k\bmod 2$. Assume that the residual representation attached to $f$ is absolutely irreducible and $p$-distinguished (see Assumption \ref{ass:residual-representation-irreducible}).

In this paper, we are interested in the study of some arithmetic invariants of the Galois representation attached to the $p$-adic \emph{Hida family} passing through $f$. Thanks to the work of Hida \cite{Hida86:Galois-representations, Hida86:Iwasawa-modules}, this is a free rank-$2$ module $\T$ over a $2$-dimensional complete local Noetherian ring $\calR$ of residue characteristic $p$, endowed with a linear action of the absolute Galois group $G_\Q$ of $\Q$. This \emph{big} representation is related to the Galois representations attached to the cusp forms that lie in the Hida family via \emph{arithmetic specializations}, as we briefly recall in Section \ref{sec:arithmetic-primes}.

Fix an imaginary quadratic field $K$ of discriminant prime to $Np$ and class number prime to $p$. We assume that $K$ satisfies the \emph{generalized Heegner hypothesis} with respect to $N$ (see Assumption \ref{ass:field-K}), i.e.~that $N=N^+ N^-$ where the primes dividing $N^+$ are split in $K$ and $N^-$ consists in the square-free product of an even number of primes that are inert in $K$. In this setting, Longo--Vigni \cite{Longo-Vigni11:quaternion-algebras-Hida-families} built a set of \emph{big Heegner classes}
\begin{equation*}
    \gk_c\in H^1(K[c],\T^\dag)
\end{equation*}
for every positive integer $c$ coprime with $N$, where $K[c]$ is the ring class field of $K$ of conductor $c$ and $\T^\dag$ is the self-dual twist of the representation $\T$ (see Definition \ref{def:self-dual-twist}). These classes satisfy some rigidity properties (see \cite[§8]{Longo-Vigni11:quaternion-algebras-Hida-families}) and can be easily modified (see Definition \ref{def:classes-zeta}) into an anticyclotomic Euler system (or, more precisely, into a \emph{$p$-complete} anticyclotomic Euler system in the sense of \cite{mastella-zerman2025:anticyclotomic}).

One of the goals of this paper is to show how one can use these classes to study some conjectures on the anticyclotomic Iwasawa theory for $\T^\dag$. Let $K_\infty/K$ be the anticyclotomic $\Z_p$-extension of $K$, set $\Gamma^\ac:=\Gal(K_\infty/K)$ and denote by $K_t/K$ the subextension of $K_\infty/K$ of degree $p^t$, for every $t\ge 0$. One of the main topics of Iwasawa theory is the study of the structure of \emph{Iwasawa Selmer groups}, that are submodules of $\varprojlim_t H^1(K_t,\T^\dag)$. By Shapiro's lemma (see Section \ref{sec:shapiro-lemma}), we shift the above problem to the study of the arithmetic of the universal anticyclotomic twist $\T^\ac:=\T^\dag\otimes_\calR\calR\llbracket\Gamma^\ac\rrbracket$ over $K$.

Under some technical assumptions on the ramification of $\T^\dag$ at the primes dividing $Np$ (see Assumptions \ref{ass:tamagawa-factors-at-N} and \ref{ass:H.stz}) together with a big image assumption (Assumption \ref{ass:big-image}), in Section \ref{sec:the-big-heegner-point-kolyvagin-system} we pursue a suitable Kolyvagin's descent to the big Heegner classes $\kappa_c$. This process, inspired by the seminal works of Kolyvagin \cite{Kolyvagin:euler-systems, Kolyvagin:finiteness-of-E-and-sha, Kolyvagin-Logachev:finiteness-of-Sha} and later refined in the anticyclotomic setting by Howard \cite{Howard04:heegner-point-kolyvagin-system} for elliptic curves, Nekov{\'a}{\v{r}} \cite{Nekovar92:chow-groups} and Longo--Vigni \cite{Longo-Vigni:kolyvagin-systems-Heegner-cycles} for modular forms and B{\"u}y{\"u}kboduk \cite{Buyukboduk:big-Heegner-point-kolyvagin-system} for non-quaternionic families of modular forms, gives rise to an explicit set of elements (see Definition \ref{def:derived-classes})
\begin{equation}\label{eq:introduction-classes}
    \boldsymbol{\gk}(n)_{t,m,s}\in H^1(K,\T_{t,m,s}),
\end{equation}
defined over some finite quotients $\T_{t,m,s}$ of $\T^\ac$ (defined in Section \ref{sec:quotients-galois-groups-and-primes}) and with $n$ varying in the set of square-free products of primes $\ell$ that are inert in $K$, coprime with $Np$ and such that the conjugacy class in $\Gal(K(\T_{t,m,s})/\Q)$ of the arithmetic Frobenius $\Fr_\ell$ at $\ell$ coincides with the conjugacy class of the complex conjugation.

The main result of this paper is to show that a slight perturbation of these classes forms a modified universal Kolyvagin system. This notion, studied in detail in \cite{mastella-zerman2025:anticyclotomic}, is a generalization of the classical notion of (universal) Kolyvagin system, first defined in \cite{Mazur-Rubin:Kolyvagin-systems} and later studied by many other authors, e.g.~\cite{Howard04:heegner-point-kolyvagin-system, Buyukboduk:deformations-of-kolyvagin-systems}. Let $\mathcal{F}_\Gr$ be the Greenberg Selmer structure on $\T^\ac$ over $K$ (see Definition \ref{dfn:strict-Greenberg-selmer-structure}), $\mathcal{P}'$ be a suitable set of primes that are inert in $K$ and denote by $\mathcal{N}'$ the set of square-free products of elements of $\mathcal{P}'$. For every couple $(n,\ell)$ with $n\ell\in\mathcal{N}'$ and $\ell$ prime, let $\chi_{n,\ell}$ be an automorphism of $\T^\ac$. In Section \ref{sec:selmer-structures-and-kolyvagin-systems} we recall the definition of the module of (modified) universal Kolyvagin systems for the quadruple $(\T^\ac,\mathcal{F}_\Gr,\mathcal{P}',\{\chi_{n,\ell}\})$, to be denoted by $\overline{\KS}(\T^\ac,\mathcal{F}_\Gr,\mathcal{P}',\{\chi_{n,\ell}\})$. The automorphisms $\{\chi_{n,\ell}\}$ are used to modify the classical notion of universal Kolyvagin system (see \cite[Definition 3.4]{Buyukboduk:deformations-of-kolyvagin-systems}). Then, the main result of this paper reads as follows.

\begin{theoremx}[Theorem \ref{thm:main-thm}]\label{thm:A}
    Under Assumptions \ref{ass:residual-representation-irreducible}, \ref{ass:field-K}, \ref{ass:tamagawa-factors-at-N}, \ref{ass:H.stz} and \ref{ass:big-image}, there is an infinite set of primes $\mathcal{P}'$, a set of automorphisms $\{\chi_{n,\ell}\}$ of $\T^{\ac}$ and a universal Kolyvagin system $\boldsymbol{\gk}^{\ac}\in\overline{\KS}(\T^{\ac},\mathcal{F}_{\Gr},\mathcal{P}',\{\chi_{n,\ell}\})$ such that
    \begin{equation*}
        \boldsymbol{\gk}^{\ac}(1)=\varprojlim_t\big(\Cor^{K[p^{t+1}]}_{K_t}U_p^{-t}\gk_{p^{t+1}}\big)\in H^1(K,\T^{\ac}),
    \end{equation*}
    where $U_p\in\calR^\times$ is the $p$-th Hecke operator.
\end{theoremx}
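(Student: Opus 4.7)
The plan is to start from the big Heegner point Euler system $\{\kappa_c\}$ of Longo--Vigni and, after the modification into the anticyclotomic Euler system $\{\zeta_c\}$ announced before Theorem~A, to apply a Kolyvagin derivative construction level by level on the quotients $\T_{m,s,t}$, obtaining the derived classes $\boldsymbol{\gk}(n)_{t,m,s}$ of \eqref{eq:introduction-classes}. The bulk of the proof consists then in verifying, for a well-chosen set of primes $\mathcal{P}'$ and a well-chosen collection of twisting automorphisms $\{\chi_{n,\ell}\}$, that the system $\{\boldsymbol{\gk}(n)_{t,m,s}\}$ satisfies the axioms of a \emph{modified} universal Kolyvagin system for $(\T^{\ac},\mathcal{F}_\Gr,\mathcal{P}',\{\chi_{n,\ell}\})$ in the sense of \cite{mastella-zerman2025:anticyclotomic}, and finally in identifying $\boldsymbol{\gk}^{\ac}(1)$ via Shapiro's lemma and a compatibility computation along the ring-class tower.

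The verification of the Kolyvagin system axioms splits into three parts. First, for $n\in\mathcal{N}'$, I would check that $\boldsymbol{\gk}(n)_{t,m,s}$ lies in the transverse Selmer group attached to $\mathcal{F}_\Gr(n)$: at primes $v\mid p$ and $v\mid N^+$ the class must satisfy the Greenberg ordinary condition, at primes $v\mid N^-$ the quaternionic Tamagawa-type condition of Assumption \ref{ass:tamagawa-factors-at-N}, and at each prime $\ell\mid n$ the localization must project to the singular quotient $H^1_\s(K_\ell,\T_{m,s,t})$. Second, at primes $\ell\nmid nNp$ outside of $\mathcal{P}'$ I would verify unramifiedness, which is inherited from $\{\zeta_c\}$. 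Third, and most delicate, I would establish the finite--singular comparison: choosing $\mathcal{P}'$ so that at every $\ell\in\mathcal{P}'$ the Frobenius $\Fr_\ell$ is conjugate to complex conjugation on $\Gal(K(\T_{t,m,s})/\Q)$, the local-at-$\ell$ singular part of $\boldsymbol{\gk}(n\ell)_{t,m,s}$ should match, up to the automorphism $\chi_{n,\ell}$, the image of the finite part of $\boldsymbol{\gk}(n)_{t,m,s}$. The combinatorial heart of this step is Kolyvagin's telescoping identity for $D_\ell$ combined with the Euler system norm relations of $\{\zeta_c\}$; the Frobenius hypothesis on $\mathcal{P}'$ is what turns the Euler factor at $\ell$ into a genuine finite-to-singular isomorphism.

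The identification of $\boldsymbol{\gk}^{\ac}(1)$ comes by specialising the construction at $n=1$: at level $t$ the derivative class reduces essentially to $\Cor^{K[p^{t+1}]}_{K_t}\kappa_{p^{t+1}}$, and the $U_p^{-t}$ factor is forced by the $p$-stabilisation in order for the norm compatibility along the anticyclotomic tower to hold. Passing to the limit over $t$ and over $(m,s)$, and then applying Shapiro's lemma as developed in Section~\ref{sec:shapiro-lemma}, yields the claimed equality in $H^1(K,\T^{\ac})$.

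The main obstacle I anticipate is precisely the construction of the twisting automorphisms $\{\chi_{n,\ell}\}$ and the proof that the finite--singular comparison holds with these modifiers. In B{\"u}y{\"u}kboduk's non-quaternionic setting \cite{Buyukboduk:big-Heegner-point-kolyvagin-system} the classical Heegner hypothesis $N^-=1$ ensures that the local Euler factors at $\ell\in\mathcal{P}'$ act transparently, and the finite--singular comparison is essentially canonical. When $N^-\neq 1$, the Shimura-curve origin of $\kappa_c$ introduces extra twists coming from the Jacquet--Langlands correspondence and from the optimal embeddings used to define Heegner points on quaternionic curves, and these twists cannot be trivialised at a single finite level; they must be absorbed coherently as $(n,\ell)$ varies, which is exactly what the relaxed framework of \emph{modified} universal Kolyvagin systems of \cite{mastella-zerman2025:anticyclotomic} is designed to accommodate. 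Showing that a single consistent family $\{\chi_{n,\ell}\}\subset\Aut(\T^{\ac})$ works uniformly for all $(n,\ell)$ and all $(t,m,s)$, and that the resulting system is genuinely universal rather than being defined only at a fixed level, is the crux of the argument.
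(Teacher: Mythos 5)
Your outline correctly mirrors the overall architecture of the paper's proof (derived classes on the quotients $\T_{t,m,s}$, verification of the Selmer condition, finite--singular comparison, and identification of $\boldsymbol{\gk}^{\ac}(1)$ via Shapiro's lemma). However, there are two concrete gaps, both concentrated in your third step.

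First, your choice of $\mathcal{P}'$ is too coarse. You take $\mathcal{P}'$ to consist of primes $\ell$ whose Frobenius is conjugate to complex conjugation in $\Gal(K(\T_{t,m,s})/\Q)$; this is exactly the condition defining $\mathcal{P}_{m,s}$ (Definition~\ref{dfn:admissible-primes}), and by Lemma~\ref{lem:Tl-acts-as-0} it only guarantees that $p^{s}$ divides $\ell+1\pm T_\ell$ in $\calR_m$. When you run the telescoping identity together with the Euler system norm relations and the Eichler--Shimura congruence, the outcome is a relation of the shape
\begin{equation*}
\Bigl(\tfrac{(\ell+1)\Fr_\ell-T_\ell}{p^{s(\ell)}}\Bigr)\,\alpha_\gl\bigl(\loc_\gl \boldsymbol{\gk}(n)\bigr)
=
\Bigl(\tfrac{(\ell+1)-(-1)^{\frac{k+j}{2}-1}T_\ell\Fr_\ell}{p^{s(\ell)}}\Bigr)\,\beta_\gl\bigl([\loc_\gl \boldsymbol{\gk}(n\ell)]_\s\otimes\sigma_\ell\bigr),
\end{equation*}
and to turn this into (K2) you must \emph{invert} both bracketed operators. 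Divisibility of $\ell+1\pm T_\ell$ by $p^{s(\ell)}$ is not enough; you need the quotients $\frac{\ell+1\pm T_\ell}{p^{s(\ell)}}$ to be \emph{units} of $\calR_m$. That is precisely why the paper refines $\mathcal{P}_{m,s}$ to the subsets $\mathcal{L}_{m,s}$ in Definition~\ref{dfn:kolyvagin-primes}, and proving these are still infinite (Lemma~\ref{lem:finer-choice-of-primes}) requires the big-image Assumption~\ref{ass:big-image}, which does not appear in your plan. Without this refinement of $\mathcal{P}'$ you cannot define $\chi_{n,\ell}$ as an automorphism rather than a mere endomorphism, and the Kolyvagin system axiom fails.

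Second, the attribution of the twist $\chi_{n,\ell}$ to the Jacquet--Langlands correspondence and to optimal embeddings on Shimura curves is not where the twist comes from. The norm and congruence relations for the quaternionic $\kappa_c$ (Propositions~\ref{prop:properties-big-Heegner-points} and~\ref{prop:Eichler-Shimura-relation-big-Heegner-points}) have exactly the same shape as in the modular-curve case. The twist $\chi_{n,\ell}=\vartheta_\ell^{-1}$ is the ratio of the two operator coefficients in the displayed formula above, i.e.~it records the mismatch between $(\ell+1)\Fr_\ell-T_\ell$ and $(\ell+1)\Fr_\ell-(-1)^{\frac{k+j}{2}-1}T_\ell$ after dividing by $p^{s(\ell)}$. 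This phenomenon is intrinsic to the Kolyvagin derivative computation on the self-dual twist and is exactly what Büyükboduk's modified framework (\cite[Definition 3.4]{Buyukboduk:deformations-of-kolyvagin-systems}) already encodes in the non-quaternionic case. What the quaternionic setting changes is only the construction of the classes $\kappa_c$ themselves, not the finite--singular comparison, and your diagnosis of the "crux" would send you hunting for an obstruction that is not there while overlooking the invertibility problem that is.
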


Sections \ref{sec:kolyvagin-primes-and-main-result}--\ref{sec:the-key-relation} are devoted to the proof of this theorem. There, we explicitly define the set $\mathcal{P}'$ of admissible primes (see Definition \ref{dfn:kolyvagin-primes}), the automorphisms $\chi_{n,\ell}$ and the modified universal Kolyvagin system $\boldsymbol{\gk}^{\ac}$ (see Section \ref{sec:the-key-relation}), built out of the classes \eqref{eq:introduction-classes}. Usually, it is not possible to massage a modified universal Kolyvagin system into a classical universal Kolyvagin system, and we don't have any evidence that this can be done in our setting. However, in Section \ref{sec:Anticyclotomic-Iwasawa-theory}, we show that there is no detectable difference between the two notions when it comes to applications.

Indeed, in the last section we explain how the existence of a non-trivial modified universal Kolyvagin system yields to the proof of one divisibility of the big Heegner point main conjecture for $\T^\ac$ (\cite[Conjecture 10.8]{Longo-Vigni11:quaternion-algebras-Hida-families}) if we assume that $\calR$ is regular (Assumption \ref{ass:R-regular}). Let $\calR^\ac:=\calR\llbracket\Gamma^\ac\rrbracket$ and $\A^\ac:=\T^\ac\otimes_{\calR^\ac}(\calR^\ac)^\vee$, where $(\bullet)^\vee:=\Hom_{\cont}(\bullet,\Q_p/\Z_p)$ is the Pontryagin dual. For $X$ equal to $\T^\ac$ or $\A^\ac$, denote by $H^1_{\mathcal{F}_\Gr}(K,X)$ the Greenberg Selmer module of $X$ over $K$. If $M$ is a finitely generated torsion $\calR^\ac$-module, its \emph{characteristic ideal}
\begin{equation*}
         \Char_{\calR^{\ac}}(M)=\prod_{\hight\p=1} \p^{\length_{\calR^\ac_\p}(M_\p)}\subseteq \calR^\ac
\end{equation*}
classifies $M$ up to pesudo-isomorphism, where the product runs over the set of high-one primes of $\calR^\ac$. Our second main result reads as follows.

\begin{theoremx}[Theorem \ref{thm:iwasawa-main-conjecture}]\label{thm:B}
    Assume that the system $\boldsymbol{\gk}^\ac\in \overline{\KS}(\T^\ac,\mathcal{F}_\Gr,\mathcal{P}',\{\chi_{n,\ell}\})$ of Theorem \ref{thm:A} is such that $\boldsymbol{\gk}^{\ac}(1)\ne 0$. Then  
    \begin{itemize}
        \item[(a)] $H^1_{\mathcal{F}_\Gr}(K,\T^\ac)$ is a torsion-free  $\calR^\ac$-module of rank 1.
        \item[(b)] $\Char_{\calR^\ac}(H^1_{\mathcal{F}_\Gr}\big(K,\A^\ac)^\vee_{\tors}\big)\supseteq \Char_{\calR^\ac}\big(H^1_{\mathcal{F}_\Gr}(K,\T^{\ac})/\calR^\ac\cdot \boldsymbol{\gk}^\ac(1)\big)^2$.
    \end{itemize}
\end{theoremx}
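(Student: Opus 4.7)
The plan is to apply the general machinery of modified universal Kolyvagin systems developed in \cite{mastella-zerman2025:anticyclotomic} to the input produced by Theorem \ref{thm:A}. Overall, I would (i) verify that the Greenberg Selmer structure $(\T^{\ac},\mathcal{F}_\Gr)$ satisfies the core-rank-one condition required to activate the general bounds; (ii) use the non-vanishing hypothesis on $\boldsymbol{\gk}^{\ac}(1)$ to control the structure of $H^1_{\mathcal{F}_\Gr}(K,\T^{\ac})$; and (iii) transcribe the pointwise length estimates on the Selmer modules obtained at height-one primes of $\calR^{\ac}$ into a divisibility of characteristic ideals, exploiting the regularity of $\calR$ (hence of $\calR^{\ac}$).

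For part (a), the lower bound on the rank is immediate: $\boldsymbol{\gk}^{\ac}(1)\in H^1_{\mathcal{F}_\Gr}(K,\T^{\ac})$ is non-zero by assumption. The upper bound on the rank together with torsion-freeness is the core-rank-one statement. Localizing at a height-one prime $\p\subseteq\calR^{\ac}$ (a DVR thanks to Assumption \ref{ass:R-regular}) reduces the question to the classical Mazur--Rubin setup, where the derived Kolyvagin classes $\boldsymbol{\gk}^{\ac}(n)$ for $n\in\mathcal{N}'$ supply enough cohomological obstructions to annihilate any would-be extra summand or torsion element. The main global input here is a Chebotarev argument over the tower of fields cut out by the finite quotients $\T_{m,s,t}$, which ensures that $\mathcal{P}'$ is dense enough in $\Gal(\bar{K}/\Q)$ to detect every non-zero Selmer class.

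For part (b), the argument is the Mazur--Rubin-style bound on the Pontryagin dual of the discrete Selmer group, transferred through the modified framework. Working prime-by-prime, at each height-one prime $\p$ of $\calR^{\ac}$ I would bound $\length_{\calR^{\ac}_\p}\bigl(H^1_{\mathcal{F}_\Gr}(K,\A^{\ac})^\vee_{\tors}\bigr)_\p$ by twice $\length_{\calR^{\ac}_\p}\bigl(H^1_{\mathcal{F}_\Gr}(K,\T^{\ac})/\calR^{\ac}\cdot\boldsymbol{\gk}^{\ac}(1)\bigr)_\p$, with the factor of two reflecting the self-dual, core-rank-one geometry of the Heegner setup. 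Concretely, I would pick derived classes $\boldsymbol{\gk}^{\ac}(n)$ indexed by chains of Kolyvagin primes $\ell\in\mathcal{P}'$ and combine their finite--singular localization maps with Poitou--Tate global duality to manufacture annihilators of the dual Selmer group of increasing length. Once these pointwise estimates are in hand, regularity of $\calR^{\ac}$ promotes the collection of height-one length inequalities to the claimed inclusion of characteristic ideals via the classification of finitely generated torsion modules up to pseudo-isomorphism.

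The principal obstacle is not the final characteristic-ideal packaging but verifying that the \emph{modified} Kolyvagin-system relations, twisted by the automorphisms $\chi_{n,\ell}$, still feed correctly into the Poitou--Tate identity that underpins the Mazur--Rubin length inequality. In other words, one must check that replacing the naive finite--singular comparison at $\ell$ by its $\chi_{n,\ell}$-twist preserves the relevant local duality pairing up to a unit in $\calR^{\ac}$. This is precisely the compatibility built into the formalism of \cite{mastella-zerman2025:anticyclotomic}, and invoking it in the present setting reduces to the explicit local calculation at each $\ell\in\mathcal{P}'$ already carried out while verifying the modified Kolyvagin-system axioms in Section \ref{sec:the-key-relation}.
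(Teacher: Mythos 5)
Your proposal identifies the right overall architecture --- run the rank-one Kolyvagin-system Selmer bound (Howard's theorem and its modified avatar from \cite{mastella-zerman2025:anticyclotomic}) and then upgrade pointwise length estimates to an inclusion of characteristic ideals using the structure theory of torsion modules over the regular ring $\calR^{\ac}$. But the mechanism you propose for reducing to the one-variable situation --- localizing $\calR^{\ac}$ at a height-one prime $\p$ and then invoking ``the classical Mazur--Rubin setup'' over the DVR $\calR^{\ac}_\p$ --- does not work, and this is precisely where the actual proof takes a different route.

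The problem is that $\calR^{\ac}_\p$, while indeed a DVR, has an \emph{infinite} residue field $\kappa(\p)=\Frac(\calR^{\ac}/\p)$. Kolyvagin's descent and the accompanying Chebotarev arguments require finite residue layers $T/\m^i T$ on which one can do arithmetic over finite Galois groups; these simply do not exist for $\T^{\ac}\otimes_{\calR^{\ac}}\calR^{\ac}_\p$. Moreover, Galois cohomology does not commute with localization of the coefficient ring, so the localized Selmer modules $H^1_{\mathcal{F}_\Gr}(K,\T^{\ac})_\p$ are not of the form $H^1_{\mathcal{F}}(K,T)$ for any Galois module $T$ to which Mazur--Rubin or Howard directly applies. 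What the paper does instead, following Fouquet \cite[\S 5--6]{Fouquet13:Dihedral}, is work with \emph{specializations} $\sfrak\colon\calR^{\ac}\to S$ to DVRs $S$ \emph{finite over} $\OF$ (hence with finite residue field). For such $\sfrak$ the module $\T_{\sfrak}^{\ac}$ is genuinely a classical Kolyvagin-system input, and the length bound is supplied by Lemma~\ref{lem:Howard-result}, after the comparison Lemmas~\ref{lem:equality-Selmer-Nekovar} and~\ref{lem:Greenberg=Bloch-Kato} identify $\mathcal{F}_{\sfrak}$ with Bloch--Kato and with Nekov\'a\v r's extended Selmer groups $\tilde H^1_\f$. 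Part~(b) is then proved by contradiction: if the characteristic-ideal inclusion failed, a density result for such arithmetic specializations (together with the control built into Nekov\'a\v r's Selmer-complex formalism) would produce a specific $\sfrak$ at which the length inequality of Lemma~\ref{lem:Howard-result} is violated. So the essential missing ingredient in your sketch is the passage through a sufficiently dense family of finite-residue-field specializations, plus a control theorem to relate the specialized Selmer groups to the big ones --- this is nontrivial and is exactly the content of \S 5 of the paper and of the cited parts of Fouquet, not something one can shortcut by naive localization at height-one primes.
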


The proof of this theorem follows closely the arguments of Fouquet \cite[§5-6]{Fouquet13:Dihedral}. If we further assume that $p\nmid\varphi(N)$, the work of Cornut--Vatsal \cite{cornut-vatsal2007:nontriviality} (see also \cite[Corollary 3.1.2]{Howard07:variation-of-Heegner-points-in-Hida-families}) implies that $\boldsymbol{\gk}^\ac(1)\ne 0$, therefore the statement of Theorem \ref{thm:B} simplifies. 

\subsection{Acknowledgments}
I would like to thank Stefano Vigni, who gave me this project for my Ph.D. thesis and supported me in every moment. A special thanks goes also to Matteo Longo, for his fast and valuable answers, and to Luca Mastella, for interesting discussions on these topics. The author was supported by the ERC Consolidator Grant \emph{ShimBSD: Shimura varieties and the BSD conjecture}.

\section{Big Galois representations and big Heegner classes}

In this section we study the big Galois representation attached to a Hida family of cusp forms and we review the definition of the big Heegner classes of \cite{Longo-Vigni11:quaternion-algebras-Hida-families}, which live in the cohomology of the self-dual twist of this representation.

\subsection{Setting} Let $N^-$ and $N^+$ be coprime positive integers, with $N^-$ consisting of the square-free product of an even number of primes. Let $N=N^+N^-$ and fix a prime $p\nmid 6N$. Fix once and for all embeddings of algebraic closures $\bar{\Q}\hookrightarrow \bar{\Q}_p$ and $\bar{\Q}\hookrightarrow\C$. Call $\boldsymbol{\mu}_{p-1}$ the group of $(p-1)$-st roots of unity. Define $\omega\colon (\Z/p\Z)^\times\to \boldsymbol{\mu}_{p-1}$ to be the Teichm\"uller character, where we see $\boldsymbol{\mu}_{p-1}$ both inside $\Z_p^\times$ and $\bar{\Q}$. Let
	\begin{equation*}
	f(q)=\sum_{n=1}^\infty a_n(f)q^n\in S_k\big(\Gamma_0(Np),\omega^j\big)
	\end{equation*}
	be a normalized cusp form of weight $k\ge 2$ and character $\omega^j$, which is an eigenform for the Hecke operators $T_\ell$ for $\ell\nmid Np$ and $U_\ell$ for $\ell\mid Np$. Here $j$ is an integer that necessarily satisfies $j\equiv k\pmod 2$. Let $F$ be a finite extension of $\Q_p$ which contains all Fourier coefficients of $f$ and let $\OF$ be its valuation ring. We assume further that $f$ is an ordinary $p$-stabilized newform, in the sense that $a_p(f)\in\OF^\times$ and that the conductor of $f$ is divisible by $N$ (see \cite[Section 2.4]{Vigni22:shafarevich-tate-groups}). 

Let $T_f$ be a $G_\Q$-stable $\OF$-lattice inside the Deligne representation $V_f$ attached to $f$ (see \cite{Deligne:formes-modulaires}), which induces a representation
\begin{equation*}
    \rho_{f}\colon G_\Q\longrightarrow \GL(T_{f})\cong\GL_2(\OF)\subseteq\GL_2(F)
\end{equation*}
of the absolute Galois group of $\Q$ with the property that the arithmetic Frobenius $\Fr_\ell$ at any prime $\ell\nmid Np$ has characteristic polynomial
 \begin{equation*}
     X^2-a_\ell(f) X+\omega^j(\ell)\ell^{k-1}.
 \end{equation*}
If $\pi_F$ is a uniformizer for $\OF$, the \emph{residual $G_\Q$-representation of $\rho_f$} is the composition
 \begin{equation*}
            \bar{\rho}_f: G_\Q\longrightarrow \GL(T_f)\longrightarrow \GL(T_f/\pi_F T_f).
\end{equation*}
The representation $\bar{\rho}_f$ is said to be \emph{$p$-distinguished} if its restriction to a decomposition group $\mathfrak{D}_p$ at $p$ can be put in the shape $\bar{\rho}_f|_{\mathfrak{D}_p}=\left(\begin{smallmatrix}
        \e_1 &*\\ 0&\e_2
    \end{smallmatrix}\right)$ for characters $\e_1\ne \e_2$ (see \cite[Definition 4]{Ghate05:ordinary-forms}). Notice that, since $f$ is $p$-ordinary, it is a result due to Mazur and Wiles that $\bar{\rho}_f|_{\mathfrak{D}_p}$ is always upper triangular (see \cite[§1]{Ghate05:ordinary-forms}).

 \begin{assumption}\label{ass:residual-representation-irreducible}
     Assume that the residual $G_\Q$-representation $\bar{\rho}_{f}$ is irreducible and $p$-distinguished. 
 \end{assumption}

 Under this assumption, all $G_\Q$-stable $\OF$-lattices in $V_f$ are homothetic, therefore the representation $T_f$ is uniquely determined up to isomorphism. Since $p\ne 2$, the representation $\bar{\rho}_{f}$ is irreducible if and only if it is absolutely irreducible (see \cite[Remark 2.5]{Vigni22:shafarevich-tate-groups}).

 \begin{assumption}\label{ass:field-K}
 Fix an imaginary quadratic field $K$ different from $\Q(\sqrt{-1})$ and $\Q(\sqrt{-3})$ with the following properties:
		\begin{itemize}
			\item[(a)] The class number of $K$ is prime to $p$.
			\item[(b)] The discriminant of $K$ is prime to $Np$.
			\item[(c)] All primes dividing $N^+$ (respectively, $N^-$) are split (respectively, inert) in $K$.
		\end{itemize}
\end{assumption}

\begin{remark}
    Points (a) and (b) are assumed in order to simplify many technical arguments. For instance, point (a) implies that the anticyclotomic $p$-extension of $K$ is totally ramified at the primes of $K$ above $p$ (see Section \ref{sec:anticyclotomic-twists}) and point (b) is used in the proof of Lemma \ref{lem:no-invariants}. Point (c) is sometimes called the \emph{generalized Heegner hypothesis} for $K$ with respect to $N=N^+N^-$, which is needed to build the big Heegner points of \cite{Longo-Vigni11:quaternion-algebras-Hida-families} (see Section \ref{sec:big-heegner-classes}). We believe that many arguments that appear in this paper can be adapted to the case when the class number of $K$ is not coprime with $p$ or when $p$ is ramified in $K$.
\end{remark}

Assumptions \ref{ass:residual-representation-irreducible} and \ref{ass:field-K} will be in force until the end of the paper.

\subsection{Hida families} For the convenience of the reader, we recall the construction of the branch of the Hida family where $f$ belongs. For simplicity, we mainly follow \cite[§2.1]{Howard07:variation-of-Heegner-points-in-Hida-families}, \cite[§5]{Longo-Vigni11:quaternion-algebras-Hida-families} and \cite[§1.4]{Nekovar-Plater:parity-selmer-groups}. The reader may also refer to \cite{Hida86:Galois-representations, Hida86:Iwasawa-modules} and \cite[§12.7]{Nekovar06:selmer-complexes} for more details. To start with, let's abbreviate 
\begin{equation*}
    \Gamma:=1+p\Z_p\quad\text{and}\quad \gL_F:=\OF\llbracket \Gamma\rrbracket\subseteq\OF\llbracket \Z_p^\times\rrbracket
\end{equation*}
and fix a profinite generator $\gamma$ of $\Gamma$. For every $r\ge 2$ and $m\ge 1$, let $\mathfrak{h}_{r,m}$ be the $\OF$-algebra generated by all Hecke operators $T_\ell$ for $\ell\nmid Np$, together with the operators $U_\ell$ for $\ell\mid Np$ and the diamond operators $\langle d\rangle$ for $d\in (\Z/p^m\Z)^\times$, acting on the space of cusp forms $S_r(\Gamma_0(N)\cap\Gamma_1(p^m),\bar{\Q}_p)$. We make $\mathfrak{h}_{r,m}$ into an $\OF\llbracket \Z_p^{\times}\rrbracket$-algebra via the map
\begin{equation*}
    [z]\longmapsto z^{r-2}\langle z\rangle
\end{equation*}
for every $z\in\Z_p^\times$, where we indicate with square brackets the group elements in $\OF\llbracket \Z_p^{\times}\rrbracket$. Hida's ordinary projector $e^{\ord}:=\lim_s U_p^{s!}$ defines an idempotent in each $\mathfrak{h}_{r,m}$ and these are compatible with respect to the natural surjections $\h_{r,m+1}\twoheadrightarrow\h_{r,m}$. If we define $\h_{r,m}^{\ord}:=e^{\ord}\h_{r,m}$, the algebra
\begin{equation*}
    \h_r^{\ord}:=\varprojlim_m \h_{r,m}^{\ord}
\end{equation*}
is finite over $\gL_F$ and the image of $U_p$ is invertible in $\h_r^{\ord}$.

 \begin{definition}
        For every $r\ge 2$ and $m\ge 1$, define 
        \begin{itemize}
            \item $\omega_{r,m}=[\gamma]^{p^{m-1}}-\gamma^{(r-2)p^{m-1}}\in \gL_F$;
            \item $P_{r,\e}=[\gamma]-\e(\gamma)\gamma^{r-2}\in\mathcal{O}_F[\e]\llbracket \Gamma\rrbracket$ for every homomorphism $\e\colon \Gamma/\Gamma^{p^{m-1}}\to\bar{\Q}_p^\times$.
        \end{itemize}
    \end{definition}
    An elementary computation shows that $\omega_{r,m}=\prod_{\e}P_{r,\e}$, where the product is taken over all homomorphisms $\e: \Gamma/\Gamma^{p^{m-1}}\to\bar{\Q}_p^\times$. We now recall the following structure theorem about big ordinary Hecke algebras.

    \begin{theorem}[Hida]\label{thm:first-Hida-structure-theorem}
    Let $r,r'\ge 2$.
    \begin{itemize}
        \item[(a)] There is a canonical isomorphism of $\gL_F$-algebras $\mathfrak{h}_r^{\ord}\cong \mathfrak{h}_{r'}^{\ord}$ that sends Hecke operators $T_\ell$, $U_\ell$ and $d^{r-2}\langle d\rangle$ to $T_\ell$, $U_\ell$ and $d^{r'-2}\langle d\rangle$, respectively. 
        \item[(b)] The canonical map $\mathfrak{h}_2^{\ord}\overset{\cong}{\to}\mathfrak{h}_{r}^{\ord}\to \mathfrak{h}_{r,m}^{\ord}$ induces an isomorphism
    \begin{equation*}
        \mathfrak{h}_2^{\ord}/\omega_{r,m}\mathfrak{h}_2^{\ord}\overset{\cong}{\longrightarrow} \mathfrak{h}_{r,m}^{\ord}
    \end{equation*}
    for every $m\ge 1$.
    \end{itemize}
    \end{theorem}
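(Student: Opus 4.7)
The plan is to reproduce Hida's original argument for independence of weight and the control theorem. The conceptual point is that after applying the ordinary projector $e^{\ord}$, the $p$-adic variation of spaces of cusp forms in the weight direction becomes purely cyclotomic, captured exactly by the twist $[z]\mapsto z^{r-2}\langle z\rangle$ that defines the $\gL_F$-algebra structure on $\h_r^{\ord}$.

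For part (a), I would first establish on the module side that the $\OF$-rank of $e^{\ord}S_r(\Gamma_0(N)\cap\Gamma_1(p^m);\OF)$ depends only on $N$ and $m$, and not on $r\ge 2$. Hida proves this by comparing weight-$r$ and weight-$2$ ordinary forms via a construction that uses crucially the invertibility of $U_p$ on the ordinary part, combined with the Eichler--Shimura isomorphism to reinterpret cusp forms as parabolic cohomology. Dualizing and passing to the inverse limit over $m$ then shows that $\h_r^{\ord}$ is a $\gL_F$-module whose generic rank is independent of $r$. The canonical isomorphism $\h_r^{\ord}\cong \h_{r'}^{\ord}$ is obtained by matching Hecke operators: both algebras are generated by $T_\ell$, $U_\ell$, and the images of the group-like elements, with the $\gL_F$-structures differing only by the twists $[z]=z^{r-2}\langle z\rangle$ versus $[z]=z^{r'-2}\langle z\rangle$, which is exactly the prescription in the statement.

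For part (b), using (a) one may replace $\h_r^{\ord}$ by $\h_2^{\ord}$ throughout. The factorization $\omega_{r,m}=\prod_{\e}P_{r,\e}$ over characters $\e\colon \Gamma/\Gamma^{p^{m-1}}\to\bar{\Q}_p^\times$ reduces the problem to identifying each quotient $\h_2^{\ord}/P_{r,\e}$ with the ordinary Hecke algebra acting on weight-$r$ cusp forms of nebentype $\e$ on $\Gamma_0(N)\cap\Gamma_1(p^m)$. That $P_{r,\e}$ lies in the kernel of the specialization is immediate from the definitions of the diamond action and the twist by $\gamma^{r-2}$. The converse, namely surjectivity of specialization and compatibility with the character decomposition, is the technical core of Hida's control theorem; it follows once the freeness and rank-independence of $\h_r^{\ord}$ as a $\gL_F$-module have been established.

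The main obstacle is precisely the uniform rank-control statement underpinning (a): showing that $e^{\ord}$ cuts out a finite free $\OF$-module of rank constant in $r$ inside the a priori weight-dependent spaces of weight-$r$ cuspidal cohomology, and that these ranks descend correctly through the projective system in $m$. This is the technical heart of Hida's theory and is what makes the whole $\gL_F$-adic framework work; everything else in the theorem is a formal consequence once this uniform control is in place.
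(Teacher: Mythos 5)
The paper's proof of this statement is a one-line citation to \cite[Proposition 1.4.3]{Nekovar-Plater:parity-selmer-groups}, which in turn tracks back to Hida's control theorem; so there is no in-paper argument to compare against. Your outline faithfully describes the standard Hida-theoretic route (independence of weight for the ordinary part via invertibility of $U_p$ and Eichler--Shimura, then specialization through $\omega_{r,m}$), and you correctly identify the uniform rank-control statement as the technical core. But you also explicitly do not prove that core, deferring it to Hida as a black box; in that sense your proposal is exactly as much a ``proof by citation'' as the paper's own, just with the structure of the cited argument expanded. One small imprecision: the further decomposition of $\h_2^{\ord}/\omega_{r,m}$ along the factors $P_{r,\e}$ only makes sense after extending scalars to a ring containing the values of all $\e$ (the $P_{r,\e}$ live in $\mathcal{O}_F[\e]\llbracket\Gamma\rrbracket$, not in $\gL_F$); Hida's control theorem is most cleanly stated directly with $\omega_{r,m}\in\gL_F$, and the nebentype refinement is not needed to establish part (b) as stated.
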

    \begin{proof}
        See \cite[Proposition 1.4.3]{Nekovar-Plater:parity-selmer-groups}.
    \end{proof}
    From now on, we will then work with the algebra $\h^{\ord}:=\h_2^{\ord}$. By duality, the fixed cuspform $f$ determines an $\OF$-algebra map
    \begin{equation}\label{eq:theta-f}
        \theta_f\colon \h^{\ord}\twoheadrightarrow \h_{k,1}^{\ord}\to\OF
    \end{equation}
    characterized by $\theta_f(T_\ell)=a_\ell(f)$ for $\ell\nmid Np$,  $\theta_f(U_\ell)=a_\ell(f)$ for $\ell\mid Np$ and $\theta_f([\gamma])=\gamma^{k-2}$. By \cite[Corollary 7.6]{Eisenbud:commutative-algebra}, the $\gL_F$-algebra $\h^{\ord}$ decomposes as the direct sum of its completions at maximal ideals, and we let $\h_{\m_f}^{\ord}$ be the unique local summand through which $\theta_f$ factors. It is also known (see \cite[§12.7.5]{Nekovar06:selmer-complexes} or \cite[Corollary 1.4]{Hida86:Galois-representations}) that the localization of $\h_{\m_f}^{\ord}$ at $\ker(\theta_f)$ is a discrete valuation ring, unramified over the localization of $\gL_F$ at $\gL_F\cap \ker(\theta_f)$. Therefore, there is a unique minimal prime $\mathfrak{a}_f$ contained in $\ker(\theta_f)$. We set $R_f=\h_{\m_f}^{\ord}/\mathfrak{a}_f$.

    \begin{definition}
        The integral closure $\mathcal{R}$ of $R_f$ is called the \emph{branch} of the Hida family where $f$ lives. We denote by $\m_{\mathcal{R}}$ the maximal ideal of $\mathcal{R}$.
    \end{definition}

    \begin{remark}
        This notation is not completely settled, as some authors, such as Howard \cite{Howard07:variation-of-Heegner-points-in-Hida-families}, use the name ``branch of the Hida family where $f$ lives'' for the ring $R_f$. Our choice of terminology, instead, follows \cite{Longo-Vigni11:quaternion-algebras-Hida-families}.
    \end{remark}

    \begin{lemma}
   \label{lem:algebraic-properties-of-R}
       The ring $\mathcal{R}$ is a complete local Noetherian domain which is finitely generated over $\gL_F$. Moreover, it is a Cohen-Macaulay ring, free over $\gL_F$.
   \end{lemma}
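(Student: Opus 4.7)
The approach is to first verify the structural properties of $R_f$, then transfer them to its integral closure $\mathcal{R}$, and finally invoke Auslander--Buchsbaum over the regular base $\gL_F$.

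Since $\h^{\ord}$ is a finite $\gL_F$-algebra, its local direct summand $\h_{\m_f}^{\ord}$ is a complete local Noetherian ring, finite over $\gL_F$ and of Krull dimension $2$; for the dimension, I would use that $\gL_F$ is itself $2$-dimensional, and that $\ker(\theta_f)$, being of height $1$ (its localization is a DVR, as stated in the excerpt), strictly contains the minimal prime $\mathfrak{a}_f$. Consequently $R_f=\h_{\m_f}^{\ord}/\mathfrak{a}_f$ is a complete local Noetherian domain of Krull dimension $2$, finite over $\gL_F$.

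To pass from $R_f$ to $\mathcal{R}$ I would use that $\gL_F$, being a complete local Noetherian ring, is excellent (in particular Nagata), and therefore so is its finite extension $R_f$. Hence $\mathcal{R}$, defined as the integral closure of $R_f$ in $\Frac(R_f)$, is a finite $R_f$-module, and in particular finite over $\gL_F$. Since $R_f$ is complete local Noetherian it is Henselian, so the finite $R_f$-algebra $\mathcal{R}$ decomposes as a finite product of local rings; as $\mathcal{R}$ is a domain, this product reduces to a single factor, making $\mathcal{R}$ itself local. It is then complete (finite over a complete local ring), Noetherian, and a normal domain of Krull dimension $2$.

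For the last two assertions, I would combine Serre's criterion with Auslander--Buchsbaum. A normal ring satisfies $(S_2)$, and together with $\dim\mathcal{R}=2$ this forces $\mathcal{R}$ to be Cohen--Macaulay. As a finite $\gL_F$-module, the depth of $\mathcal{R}$ over $\gL_F$ agrees with its depth over itself, namely $2$, since $\m_{\gL_F}\mathcal{R}$ is $\m_\mathcal{R}$-primary. The Auslander--Buchsbaum formula applied to the $2$-dimensional regular local ring $\gL_F$ then yields projective dimension zero, so $\mathcal{R}$ is $\gL_F$-free. The principal subtlety lies in proving that $\mathcal{R}$ is local rather than merely semi-local: Henselianness of the complete local ring $R_f$ is the key technical input, as without it the normalization could split non-trivially and the freeness step over $\gL_F$ would fail.
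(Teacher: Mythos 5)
Your proof is correct and its skeleton matches the paper's for the last two assertions: both use Serre's criterion (normal $\Rightarrow S_2$, so a $2$-dimensional normal domain is Cohen--Macaulay), and your Auslander--Buchsbaum computation of projective dimension over the regular base $\gL_F$ is precisely the standard proof of what the paper packages as ``miracle flatness,'' so those two steps are the same argument under two names.

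Where you genuinely diverge is in the first sentence of the lemma. The paper disposes of it by citing Longo--Vigni, whereas you supply a self-contained argument: dimension count for $R_f$ via the chain $\mathfrak{a}_f\subsetneq\ker(\theta_f)\subsetneq\m_f$ (using that the localization at $\ker(\theta_f)$ is a DVR, so $\ker(\theta_f)$ has height $1$ and cannot be maximal since the quotient is $\OF$), finiteness of the normalization $\mathcal{R}$ over $R_f$ via excellence/Nagata of the complete local ring $\gL_F$, and locality of $\mathcal{R}$ via Henselianness of $R_f$ forcing the finite domain $\mathcal{R}$ to have a trivial idempotent decomposition. This last point is indeed the subtle one and you handle it correctly; it is exactly what goes into the reference the paper cites, so your version has the advantage of making the dependence visible. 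One tiny thing you leave implicit is that $\ker(\theta_f)\ne\m_f$, which you should record (it follows from surjectivity of $\theta_f$ onto the one-dimensional ring $\OF$), since otherwise the chain you produce would not establish $\dim R_f\ge 2$. Everything else checks out.
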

   \begin{proof}
       For the first part, see \cite[Proposition 5.2]{Longo-Vigni11:quaternion-algebras-Hida-families}. Since $\mathcal{R}$ has Krull dimension $2$ and is integrally closed, Serre's criterion for normality implies that it is a Cohen-Macaulay ring. By miracle flatness, we conclude that $\mathcal{R}$ is flat (hence free) over $\gL_F$.
   \end{proof}

\subsection{Arithmetic primes}\label{sec:arithmetic-primes}
        Let $A$ be a finite commutative $\gL_F$-algebra. An $\OF$-algebra map $A\to\bar{\Q}_p$ is called \emph{arithmetic} if the composition
        \begin{equation*}
            \begin{tikzcd}
	\Gamma & {A^\times} & {\bar{\Q}_p^\times}
	\arrow["{\gamma\mapsto [\gamma]}", from=1-1, to=1-2]
	\arrow[from=1-2, to=1-3]
        \end{tikzcd}
        \end{equation*}
        has the form $\gamma\mapsto \psi(\gamma)\gamma^{r-2}$ for some $r\ge 2$ and some finite order character $\psi$ of $\Gamma$. The kernel of an arithmetic map is called an \emph{arithmetic prime} of $A$.

    As noted in \cite[§12.7.2 and §12.7.4]{Nekovar06:selmer-complexes}, the arithmetic primes of $A$ are exactly all primes lying above $P_{r,\e}\gL_{F'}\cap\gL_F$ for some $r\ge 2$, $F'$ finite extension of $F$ and $\e:\Gamma\to \mathcal{O}_{F'}$ finite order character. 

    Given an arithmetic prime $\p$, the \emph{residue field} $F_\p:=A_\p/\p A_\p=\Frac(A/\p)$ is a finite extension of $F$. The composition $\Gamma\to A^\times\to F_\p^\times$ has the form $\gamma\mapsto \psi_\p(\gamma)\gamma^{r_\p-2}$ for a finite order character $\psi_\p\colon\Gamma\to F_\p^\times$, called the \emph{wild character of $\p$}, and an integer $r_\p\ge 2$, called the \emph{weight of $\p$}.

    \begin{example}
        The map $\theta_f\colon \h_{\m_f}^{\ord}\to\OF$ induced by \eqref{eq:theta-f} is an arithmetic map with trivial wild character and weight $k$.
    \end{example}

    \begin{theorem}[Hida]\label{thm:main-thm-Hida-families}
       Let $\p$ be an arithmetic prime of $\mathcal{R}$ of weight $r_\p$ and wild character $\psi_\p$. Set $m_\p$ to be the maximum between $1$ and the $p$-adic order of the conductor of $\psi_\p$. Then the composition
        \begin{equation*}
            \h^{\ord}\longrightarrow\h_{\m_f}^{\ord}\longrightarrow\mathcal{R}\longrightarrow F_\p
        \end{equation*}
        factors through $\h_{r_\p,m_\p}^{\ord}$ and determines by duality an ordinary $p$-stabilized newform
        \begin{equation*}
            f_\p\in S_{r_\p}(\Gamma_0(Np^{m_\p}),\psi_\p\omega^{k+j-r_\p},F_\p).
        \end{equation*}
    \end{theorem}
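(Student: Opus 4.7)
The plan is in three main steps: first, show that the composition $\h^{\ord}\to\calR\to F_\p$ kills $\omega_{r_\p,m_\p}$ and hence factors through $\h^{\ord}_{r_\p,m_\p}$ by Theorem~\ref{thm:first-Hida-structure-theorem}(b); second, invoke the classical duality between the $p$-ordinary Hecke algebra and the space of cusp forms to produce the eigenform $f_\p$; third, read off the nebentypus from the $\Z_p^\times$-action.

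For the factorization step, I would use the identity $\omega_{r_\p,m_\p}=\prod_\e P_{r_\p,\e}$ recorded just before Theorem~\ref{thm:first-Hida-structure-theorem}, where $\e$ ranges over the characters of $\Gamma/\Gamma^{p^{m_\p-1}}$. By the very choice of $m_\p$, the wild character $\psi_\p$ of $\p$ has conductor dividing $p^{m_\p}$ and hence factors through this quotient, so $P_{r_\p,\psi_\p}=[\gamma]-\psi_\p(\gamma)\gamma^{r_\p-2}$ appears among the factors. By the very definition of an arithmetic prime of weight $r_\p$ and wild character $\psi_\p$, the image of $P_{r_\p,\psi_\p}$ in $F_\p$ is zero. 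Therefore $\omega_{r_\p,m_\p}$ lies in the kernel, and Theorem~\ref{thm:first-Hida-structure-theorem}(b) provides the desired factorization through $\h^{\ord}_{r_\p,m_\p}$.

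Next, by the classical duality between the $p$-ordinary Hecke algebra $\h^{\ord}_{r_\p,m_\p}$ and the space of ordinary cusp forms of level $\Gamma_0(N)\cap\Gamma_1(p^{m_\p})$ and weight $r_\p$, an $F_\p$-algebra homomorphism $\pi_\p\colon\h^{\ord}_{r_\p,m_\p}\to F_\p$ cuts out a unique normalized Hecke eigenform $f_\p$ whose Fourier coefficients are $a_\ell(f_\p)=\pi_\p(T_\ell)$ for $\ell\nmid Np$ and $a_\ell(f_\p)=\pi_\p(U_\ell)$ for $\ell\mid Np$. Ordinarity is immediate since $U_p$ is a unit in $\h^{\ord}$, so $a_p(f_\p)\in F_\p^\times$. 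That $f_\p$ is a $p$-stabilized newform, i.e.\ that its tame conductor is exactly $N$, follows from the fact that $\calR$ is the branch of the Hida family through the newform $f$: all arithmetic specializations on this branch share the tame level of $f$.

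Finally, to identify the nebentypus, I would track the action of $[d]\in\OF\llbracket\Z_p^\times\rrbracket$ for $d\in\Z_p^\times=\boldsymbol{\mu}_{p-1}\times\Gamma$. On the $\Gamma$-part, combining $\pi_\p([\gamma])=\psi_\p(\gamma)\gamma^{r_\p-2}$ with the correspondence $\langle\gamma\rangle_{\h^{\ord}_2}\leftrightarrow\gamma^{r_\p-2}\langle\gamma\rangle_{\h^{\ord}_{r_\p}}$ from Theorem~\ref{thm:first-Hida-structure-theorem}(a) yields the diamond eigenvalue $\psi_\p(\gamma)$ for $\gamma\in\Gamma$. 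On the Teichm\"uller part, I would compare with the baseline arithmetic prime $\ker(\theta_f)$: a direct computation shows $\theta_f([\omega(d)])=\omega(d)^{k+j-2}$, and since the Teichm\"uller component of the action is constant along the branch (being determined modulo $\m_\calR$), we get $\pi_\p([\omega(d)])=\omega(d)^{k+j-2}$; dividing out the weight-shift factor $\omega(d)^{r_\p-2}$ gives the Teichm\"uller part of the nebentypus as $\omega^{k+j-r_\p}$. Combining the two parts yields $\psi_\p\omega^{k+j-r_\p}$, as claimed. The deepest technical input, and the true main obstacle, is Hida's duality/control theorem identifying arithmetic specializations of the big ordinary Hecke algebra with classical ordinary $p$-stabilized newforms of the correct tame level; I would invoke this from \cite{Hida86:Galois-representations, Hida86:Iwasawa-modules} (see also \cite[§1.4]{Nekovar-Plater:parity-selmer-groups}) rather than reprove it.
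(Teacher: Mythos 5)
Your proof is correct and follows the same route that the cited references (Longo--Vigni, Howard, Nekov\'a\v{r}) take; the paper itself simply cites these sources rather than writing out the argument. Your unpacking is accurate: killing $\omega_{r_\p,m_\p}$ via the divisor $P_{r_\p,\psi_\p}$ and Theorem~\ref{thm:first-Hida-structure-theorem}(b) gives the factorization; Hida duality plus the fact that $U_p$ is a unit gives an ordinary eigenform whose tame conductor is pinned to $N$ by the choice of branch through the newform $f$; and the nebentypus bookkeeping is right, with $\theta_f([\omega(d)])=\omega(d)^{k+j-2}$ fixed along the branch because $[\omega(d)]$ maps to a $(p-1)$-st root of unity in the local ring $\calR$ and hence is determined by its residue, yielding $\omega^{k+j-r_\p}$ after subtracting the weight shift $\omega^{r_\p-2}$. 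The one genuinely deep input --- that the specialization is classical and is a $p$-stabilized newform of tame level exactly $N$ --- is, as you note, Hida's control and newform theory, which you correctly invoke rather than reprove.
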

    \begin{proof}
        See \cite[p.300]{Longo-Vigni11:quaternion-algebras-Hida-families}. See also \cite[p.97]{Howard07:variation-of-Heegner-points-in-Hida-families} and \cite[§12.7.4 and §12.7.5]{Nekovar06:selmer-complexes}.
    \end{proof}

\subsection{Big Galois representations}
   
 For every integer $m\ge 0$, denote by $X_m$ the compactified modular curve of level structure $\Gamma_0(N)\cap\Gamma_1(p^m)$ viewed as a scheme over $\Q$, by $\Jac(X_m)$ its Jacobian variety and by $\Ta_p(\Jac(X_m))$ the $p$-adic Tate module of the Jacobian. All Hecke and diamond operators act on $\Jac(X_m)$ and on $\Ta_p(\Jac(X_m))$ via the Albanese action. There is also a natural action of $G_\Q$ on them, coming from the fact that the curve $X_m$ is defined over $\Q$.
    
    As in \cite[§2.1]{Howard07:variation-of-Heegner-points-in-Hida-families} and \cite[§5.5]{Longo-Vigni11:quaternion-algebras-Hida-families}, for every integer $m\ge 1$ we define the $\h^{\ord}$-modules
    \begin{equation*}
        \begin{split}
            \Taa^{\ord}_{\m_f}&=\bigg(\varprojlim_m e^{\ord}\big(\Ta_p(\Jac(X_m))\otimes_{\Z_p}\OF \big)\bigg)\otimes_{\h^{\ord}}\h_{\m_f}^{\ord}\quad\text{and}\quad \T:=\Taa^{\ord}_{\m_f}\otimes_{\h_{\m_f}^{\ord}}\mathcal{R},
        \end{split}
    \end{equation*}
    where the inverse limit is taken with respect to the maps induced by the degeneracy maps $X_{m+1}\to X_m$. These two modules are endowed with a natural $\h^{\ord}$-linear action of the Galois group $G_\Q$. We will work with a twist of the representation $\T$, which we now recall from \cite[p. 96]{Howard07:variation-of-Heegner-points-in-Hida-families} and \cite[§5.4]{Longo-Vigni11:quaternion-algebras-Hida-families}.

    \begin{definition}\label{def:self-dual-twist}
        Factor the $p$-adic cyclotomic character $\e_{\cyc}\colon G_\Q\to \Z_p^\times$ as the product $\e_{\cyc}=\e_{\ta}\cdot \e_{\wi}$ with $\e_{\ta}\colon G_\Q\to \boldsymbol{\mu}_{p-1}$ and $\e_{\wi}\colon G_\Q\to \Gamma$. Define the \emph{critical character} $\Theta\colon G_\Q\to \gL_F^\times$ by
        \begin{equation*}
            \Theta:=\e_{\ta}^{\frac{k+j}{2}-1} \left[\e_{\wi}^{1/2}\right],
        \end{equation*}
        where $\e_{\wi}^{1/2}$ is the unique square root of $\e_{\wi}$ taking values in $\Gamma$.
    \end{definition}
    As shown in \cite[(2)]{Howard07:variation-of-Heegner-points-in-Hida-families}, the equality $\Theta^2=[\e_{\cyc}]$ holds in $\mathcal{R}$. This implies that, if $\ell$ is a prime different from $p$, the relation $\Theta^2(\Fr_\ell)=[\ell]$ holds in $\mathcal{R}$, where $\Fr_\ell$ is any arithmetic Frobenius at $\ell$.

    \begin{definition}
         Let $\mathcal{R}^\dag$ denote $\mathcal{R}$ as a module over itself with $G_\Q$ acting through the character $\Theta^{-1}$. The \emph{critical twist} of $\T$ is the $G_\Q$-module
        \begin{equation*}
            \T^{\dag}:=\T\otimes_{\mathcal{R}}\mathcal{R}^{\dag}.
        \end{equation*}
    \end{definition}

    \begin{proposition}\label{prop:characteristic-polynomial-Frobenius}
		The $\mathcal{R}$-module $\T^{\dag}$ is free of rank two. As a $G_\Q$-representation, $\T^{\dag}$ is unramified outside $Np$. The arithmetic Frobenius $\Fr_\ell$ of a prime $\ell\nmid Np$ acts on $\T^{\dag}$ with characteristic polynomial
		\begin{equation*}
		X^2-\Theta^{-1}(\Fr_\ell)T_\ell X+\ell.
		\end{equation*}
	\end{proposition}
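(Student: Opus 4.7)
The plan is to reduce each of the three claims to the analogous property of the untwisted big representation $\T$ and the explicit form of the character $\Theta$.

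For the freeness, under Assumption \ref{ass:residual-representation-irreducible} Hida's theorem ensures that $\Taa^{\ord}_{\m_f}$ is free of rank two over $\h_{\m_f}^{\ord}$ (see e.g.~\cite[§12.7]{Nekovar06:selmer-complexes}). Extending scalars along $\h_{\m_f}^{\ord}\to\mathcal{R}$ produces a free rank-two $\mathcal{R}$-module $\T$, and since $\mathcal{R}^\dag$ is free of rank one over $\mathcal{R}$ by definition, the twist $\T^\dag=\T\otimes_\mathcal{R}\mathcal{R}^\dag$ is also free of rank two. For the ramification, each modular curve $X_m$ has good reduction at every prime $q\nmid Np$, so by Néron--Ogg--Shafarevich the Tate module $\Ta_p(\Jac(X_m))$ is unramified at $q$; this property is preserved by the inverse limit, by the ordinary projector, and by the successive tensor operations that define $\T$. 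Since $\Theta$ is built from the tame and wild parts of $\e_{\cyc}$, which is unramified outside $p$, the twist by $\Theta^{-1}$ does not enlarge the ramification locus, so $\T^\dag$ is unramified outside $Np$.

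For the characteristic polynomial, I would first establish that $\Fr_\ell$ acts on $\T$ itself with characteristic polynomial $X^2-T_\ell X+[\ell]\,\ell$, and then twist. This intermediate identity is proved by specialization: at an arithmetic prime $\p$ of weight $r_\p$ and wild character $\psi_\p$, Eichler--Shimura gives that $\Fr_\ell$ acts on the Deligne representation of the newform $f_\p$ of Theorem \ref{thm:main-thm-Hida-families} with characteristic polynomial $X^2-a_\ell(f_\p)X+\psi_\p\omega^{k+j-r_\p}(\ell)\ell^{r_\p-1}$, and under the $\gL_F$-algebra structure $[\ell]\mapsto\ell^{r_\p-2}\langle\ell\rangle$ this matches the image of $X^2-T_\ell X+[\ell]\,\ell$ in $F_\p[X]$. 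Since $\mathcal{R}$ is a Noetherian integral domain, free over $\gL_F$ by Lemma \ref{lem:algebraic-properties-of-R}, and the arithmetic primes of $\mathcal{R}$ contract to the infinite family of distinct height-one primes $P_{r,\e}\gL_F$ in the UFD $\gL_F$, their intersection in $\mathcal{R}$ is zero; hence the identity, checked in every $F_\p[X]$, lifts to $\mathcal{R}[X]$. Twisting by $\Theta^{-1}$ multiplies the middle coefficient by $\Theta^{-1}(\Fr_\ell)$ and the constant term by $\Theta^{-2}(\Fr_\ell)=[\e_{\cyc}(\Fr_\ell)]^{-1}=[\ell]^{-1}$, so the new constant term is $[\ell]^{-1}\cdot[\ell]\,\ell=\ell$, yielding exactly the stated characteristic polynomial.

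The main obstacle will be the weight- and nebentypus-bookkeeping in the specialization step: one has to keep careful track of how the factors $\ell^{r_\p-1}$, $\psi_\p(\ell)$ and $\omega^{k+j-r_\p}(\ell)$ coming from Eichler--Shimura interact with the image of $[\ell]$ in $F_\p$ and with the decomposition of $\Theta$ into its tame and wild parts. Once this identification is established, the density argument and the final twist are formal.
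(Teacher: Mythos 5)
Your argument follows the same overall strategy as the paper: establish the three claims for the untwisted module $\Taa^{\ord}_{\m_f}$ (freeness, ramification, and the characteristic polynomial $X^2-T_\ell X+[\ell]\ell$ of $\Fr_\ell$), then observe that twisting by $\Theta^{-1}$ multiplies the trace by $\Theta^{-1}(\Fr_\ell)$ and the determinant by $\Theta^{-2}(\Fr_\ell)=[\ell]^{-1}$, yielding the stated polynomial. The only substantive difference is in the intermediate identity on $\T$: the paper simply cites \cite[Proposition~2.1.2]{Howard07:variation-of-Heegner-points-in-Hida-families}, whereas you propose to re-derive it by specializing at arithmetic primes, invoking Eichler--Shimura for each $f_\p$, and concluding by Zariski density of the arithmetic primes. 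This is a legitimate and self-contained route (it is essentially what Howard's cited source does), and your density argument is sound: arithmetic primes of $\calR$ have height one and form an infinite set, while any nonzero element of the two-dimensional normal domain $\calR$ lies in only finitely many height-one primes. The one place you would need to be careful, and which you rightly flag, is the nebentypus bookkeeping: the image of $[\ell]$ in $F_\p$ involves both the weight $r_\p$, the wild character $\psi_\p$, \emph{and} the fixed Teichm\"uller twist $\omega^{k+j-r_\p}$ that is baked into the convention $\Theta^2=[\e_{\cyc}]$, so matching $\theta_{f_\p}([\ell]\ell)$ with $\chi_{f_\p}(\ell)\ell^{r_\p-1}$ requires tracking the tame part carefully rather than just the diamond operator $\langle\ell\rangle$. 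Modulo that verification, the proposal is correct.
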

	\begin{proof}
        The first claim follows immediately from \cite[Proposition 2.1.2]{Howard07:variation-of-Heegner-points-in-Hida-families}. The second follows from \cite[Proposition 2.1.2]{Howard07:variation-of-Heegner-points-in-Hida-families} and the fact that the character $\Theta$ is unramified outside $p$.
		
		In order to prove the third claim, we start from the fact that the action of $\Fr_\ell$ on $\Taa^{\ord}_{\m_f}$ has characteristic polynomial $X^2-T_\ell X+[\ell]\ell$, as explained in \cite[Proposition 2.1.2]{Howard07:variation-of-Heegner-points-in-Hida-families}. When we move to $\T^\dag$, the original action of $G_\Q$ becomes twisted by $\Theta^{-1}$. Elementary computations show that the action of $\Fr_\ell$ has characteristic polynomial
		\begin{equation*}
		X^2-\Theta^{-1}(\Fr_\ell)T_\ell X+\Theta^{-2}(\Fr_\ell)[\ell]\ell
		\end{equation*}
	on $\T^\dag$. Since $\Theta^2(\Fr_\ell)=[\ell]$, we obtain the claimed polynomial.
	\end{proof}

        \begin{lemma}\label{lem:twist-Hida-representation-irreducible}
            The residual $G_\Q$-representation $\bar{\T}^\dag:=\T^\dag/\m_{\mathcal{R}}\T^\dag$ is absolutely irreducible.
        \end{lemma}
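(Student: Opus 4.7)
The plan is to reduce the claim to Assumption \ref{ass:residual-representation-irreducible}, which guarantees that $\bar{\rho}_f$ is absolutely irreducible.

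Since $\T^\dag = \T\otimes_\calR\calR^\dag$ is obtained by twisting the $G_\Q$-action on $\T$ by the character $\Theta^{-1}\colon G_\Q\to\calR^\times$, reducing modulo $\m_\calR$ yields $\bar{\T}^\dag \cong \bar{\T}\otimes\bar{\Theta}^{-1}$, where $\bar{\Theta}^{-1}$ is a $G_\Q$-character taking values in the residue field $k := \calR/\m_\calR$. Twisting by a character preserves absolute irreducibility, so it suffices to prove that $\bar{\T}:=\T/\m_\calR\T$ is absolutely irreducible.

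I would then identify the semisimplification of $\bar{\T}$ with $\bar{\rho}_f\otimes_{\OF/\pi_F}k$ by comparing traces of Frobenius. The same argument as in the proof of Proposition \ref{prop:characteristic-polynomial-Frobenius}, run before the critical twist, shows that $\Fr_\ell$ acts on $\T$ with characteristic polynomial $X^2-T_\ell X+[\ell]\ell$ for every $\ell\nmid Np$, while by Deligne its action on $T_f$ has characteristic polynomial $X^2-a_\ell(f)X+\omega^j(\ell)\ell^{k-1}$. A direct computation using the $\OF\llbracket\Z_p^\times\rrbracket$-algebra structure on $\h^\ord_2\cong\h^\ord_k$ (which sends $[z]$ to $z^{k-2}\langle z\rangle$ inside $\h^\ord_k$), together with the nebentypus relation $\langle z\rangle f = \omega^j(z)f$, shows that $\theta_f(T_\ell)=a_\ell(f)$ and $\theta_f([\ell]\ell)=\omega^j(\ell)\ell^{k-1}$. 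Reducing modulo $\m_\calR$ on the $\T$-side and modulo $\pi_F$ on the $T_f$-side, the two characteristic polynomials have the same image in $k$ for every $\ell\nmid Np$; by Chebotarev together with Brauer--Nesbitt one obtains an isomorphism $\bar{\T}^{\mathrm{ss}}\cong \bar{\rho}_f\otimes_{\OF/\pi_F}k$ of $k[G_\Q]$-modules.

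Since $\bar{\rho}_f$ is absolutely irreducible by Assumption \ref{ass:residual-representation-irreducible}, any scalar extension of it is; hence $\bar{\T}^{\mathrm{ss}}$ is absolutely irreducible, which forces $\bar{\T}=\bar{\T}^{\mathrm{ss}}$ to be absolutely irreducible, and the further twist by $\bar{\Theta}^{-1}$ yields the absolute irreducibility of $\bar{\T}^\dag$. The main subtlety to manage is the potential mismatch of residue fields: $k$ may strictly contain $\OF/\pi_F$ and $\theta_f\colon\calR\to\OF$ (extended from $R_f$) need not be surjective, so a naive set-theoretic identification of $\bar{\T}$ with $\bar{\rho}_f$ is not available; the trace-based argument via Brauer--Nesbitt sidesteps this issue cleanly, as it only requires the comparison of characteristic polynomials inside the common ring $k$.
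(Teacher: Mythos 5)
Your proof is correct, and it takes the same high-level route as the paper: reduce to $\bar{\T}$ via the observation that twisting by a one-dimensional character preserves absolute irreducibility, then relate $\bar{\T}$ to $\bar{\rho}_f$. The difference is that the paper simply cites \cite[Proposition 5.4]{Longo-Vigni11:quaternion-algebras-Hida-families} for the statement that $\bar{\T}$ is equivalent to $\bar{\rho}_f$ up to finite base change, whereas you supply a self-contained proof of that fact by comparing Frobenius characteristic polynomials and invoking Chebotarev together with Brauer--Nesbitt. Your version is more elementary in the sense that it does not lean on the external reference, and it correctly identifies the relevant Hecke-theoretic computation ($\theta_f(T_\ell)=a_\ell(f)$, $\theta_f([\ell]\ell)=\omega^j(\ell)\ell^{k-1}$) needed to match the two characteristic polynomials. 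The paper's version is shorter but less transparent about why the residual representations agree.

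One small imprecision worth fixing: you write $\bar{\T}^{\mathrm{ss}}\cong\bar{\rho}_f\otimes_{\OF/\pi_F}k$, which presupposes an embedding $\OF/\pi_F\hookrightarrow k$. In fact, since $\theta_f$ is a local $\OF$-algebra map out of the local ring $R_f$, what one gets canonically is an embedding $R_f/\m_{R_f}\hookrightarrow\OF/\pi_F$ and, separately, an embedding $R_f/\m_{R_f}\hookrightarrow k=\calR/\m_\calR$; there is no canonical map directly from $\OF/\pi_F$ to $k$ in general. The clean statement is that $\bar{\T}$ and $\bar{\rho}_f$ become isomorphic (after semisimplification) over any common finite extension of $\OF/\pi_F$ and $k$, which is exactly the ``up to a finite base change'' formulation of the cited proposition, and this is more than enough for your Brauer--Nesbitt argument. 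With that adjustment, the proof is sound.
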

        \begin{proof}
            By \cite[Proposition 5.4]{Longo-Vigni11:quaternion-algebras-Hida-families}, the residual representation $\bar{\T}=\T/\m_{\mathcal{R}}\T$ is equivalent up to a finite base change to the residual representation $\bar{\rho}_f$, which is absolutely irreducible by Assumption \ref{ass:residual-representation-irreducible}. We conclude by noting that tensoring with one-dimensional characters preserves irreducibility.
        \end{proof}

        As explained in \cite[(3)]{Howard07:variation-of-Heegner-points-in-Hida-families}, there is a perfect, alternating, $G_\Q$-invariant $\calR$-bilinear pairing
        \begin{equation}\label{eq:pairing}
            \T^\dag\times\T^\dag\longrightarrow\calR(1),
        \end{equation}
        where $\calR(1)$ is the usual Tate twist of $\calR$. The existence of this pairing implies that the representation $\T^\dag$ (and its quotients) decomposes into the sum of two $\calR$-submodules of rank 1, associated with the eigenvalues $1$ and $-1$ for the action of the complex conjugation $\tau_c$.

        As noted in \cite[p.~300]{Longo-Vigni11:quaternion-algebras-Hida-families}, a consequence of Assumption \ref{ass:residual-representation-irreducible} is that, if $\mathfrak{D}_p$ is a decomposition group for $p$ in $G_{\Q}$, there is an exact sequence of $\mathcal{R}\llbracket \mathfrak{D}_p\rrbracket$-modules
        \begin{equation}\label{eq:exact-sequence-at-p}
            0\longrightarrow F_p^+(\T^\dag)\longrightarrow \T^\dag\longrightarrow F_p^-(\T^\dag)\longrightarrow 0,
        \end{equation}
        where $F_p^+(\T^\dag)$ and $F_p^-(\T^\dag)$ are free $\mathcal{R}$-modules of rank $1$.

\subsection{Big Heegner classes} \label{sec:big-heegner-classes}

For every positive integer $c$ coprime with $N$, in \cite[Definition 7.4]{Longo-Vigni11:quaternion-algebras-Hida-families} Longo and Vigni defined the \emph{big Heegner class of conductor $c$}
\begin{equation*}
    \kappa_c\in H^1(K[c],\T^{\dag}),
\end{equation*}
where $K[c]$ is the ring class field of the imaginary quadratic field $K$ (fixed in Assumption \ref{ass:field-K}) of conductor $c$. If $L/F$ is a finite field extension, denote by $\Cor^L_F$ the corestriction map in cohomology. For the purpose of this work, we don't need to recall the definition of Longo--Vigni's big Heegner classes, since we will only rely on their Euler system properties, proved in \cite[§8]{Longo-Vigni11:quaternion-algebras-Hida-families}. 

\begin{proposition}\label{prop:properties-big-Heegner-points}
    Let $c\in\Z_{>0}$ coprime with $N$ and $\ell\nmid Npc$ a prime of $\Q$ inert in $K$. Then
    \begin{itemize}
        \item[(a)] $ U_p(\kappa_c)=\Cor_{K[c]}^{K[cp]}(\kappa_{cp})$;
        \item[(b)] $T_\ell(\kappa_c)=\Cor_{K[c]}^{K[c\ell]}(\kappa_{c\ell})$.
    \end{itemize}
\end{proposition}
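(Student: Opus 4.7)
The plan is to reduce both claims to the Euler system distribution relations satisfied by the CM points that go into the construction of $\kappa_c$. Recall that Longo--Vigni build $\kappa_c$ by starting from a CM point $P_{c,m}$ of conductor $cp^m$ on the Shimura curve $X_m$ attached to the indefinite quaternion algebra ramified at the primes dividing $N^-$, pushing it to the Jacobian to obtain an element of $\Jac(X_m)(K[cp^m])$, applying the Kummer map, corestricting along $K[cp^m]/K[c]$, applying Hida's ordinary projector, re-normalizing by an appropriate power of $U_p$ to guarantee compatibility along the tower, taking an inverse limit over $m$, tensoring with $\calR$, and twisting by the critical character $\Theta^{-1}$.

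For part (a), the identity $U_p(\kappa_c)=\Cor_{K[c]}^{K[cp]}(\kappa_{cp})$ is essentially built into the definition. Unfolding the inverse limit, the system defining $\kappa_{cp}$ is the tower for conductor $c$ shifted by one step at $p$; corestriction along this extra step contributes an additional factor of $U_p$ once the ordinary projector is applied, and this is precisely what is absorbed into the $U_p$-normalization used to define $\kappa_c$. This is the same phenomenon that makes $U_p$ invertible on the ordinary part of $\h^{\ord}$. Since the twist by $\Theta^{-1}$ is $G_\Q$-equivariant and commutes with the Hecke action, it plays no role here.

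For part (b), the geometric input is the classical distribution relation for Heegner points on Shimura curves: for $\ell\nmid Npc$ inert in $K$, the Hecke correspondence $T_\ell$ on $X_m$ sends the CM point of conductor $c$ to the sum of the CM points of conductor $c\ell$ lying above it. Using Assumption \ref{ass:field-K} and the standard structure of ring class field Galois groups, the $\ell+1$ such points form a single $\Gal(K[c\ell]/K[c])$-orbit, so after the Kummer and ordinary steps this orbit sum becomes exactly $\Cor_{K[c]}^{K[c\ell]}(\kappa_{c\ell})$. One then pushes this finite-level identity through the inverse limit over $m$ and the critical twist to recover the stated relation.

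The main obstacle is the quaternionic bookkeeping. One must pin down the $T_\ell$-correspondence on $X_m$ in the quaternion algebra setting, check that the resulting distribution formula matches the Hecke action on $\T^\dag$ recorded in Proposition \ref{prop:characteristic-polynomial-Frobenius}, and verify compatibility of the finite-level relations with the transition maps of the inverse system so that they descend cleanly to $\T^\dag$ (as opposed to holding only up to an error vanishing in the limit). These verifications, along with the precise normalizations, are carried out in detail in \cite[§8]{Longo-Vigni11:quaternion-algebras-Hida-families}, which I would ultimately invoke.
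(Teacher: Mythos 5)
Your proposal takes the same route as the paper: the paper's entire proof is a citation to \cite[Corollaries 8.2 and 8.4]{Longo-Vigni11:quaternion-algebras-Hida-families}, and you too ultimately invoke those results. Your preliminary sketch of what lies underneath (the $U_p$-normalization absorbing the corestriction along the $p$-tower for part (a), and the distribution relation for CM points on the Shimura curve together with the transitivity of $\Gal(K[c\ell]/K[c])$ on the $\ell+1$ points above for part (b)) is a correct and useful gloss on the content of those corollaries, though be aware that the symbol $X_m$ in this paper is reserved for the classical modular curve of level $\Gamma_0(N)\cap\Gamma_1(p^m)$ rather than for the quaternionic Shimura curve on which the CM points actually live.
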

\begin{proof}
    See \cite[Corollaries 8.2 and 8.4]{Longo-Vigni11:quaternion-algebras-Hida-families}.
\end{proof}

Keep the same notation of the previous proposition. By class field theory, the prime $\gl=(\ell)\subseteq K$ splits completely in the extension $K[c]/K$.  Fix a prime $\gl_c\in K[c]$ above $\gl$. Then, $\gl_c$ is totally ramified in $K[c\ell]$, so that $\gl_c\mathcal{O}_{K[c\ell]}=\gl_{c\ell}^{\ell+1}$ for a prime ideal $\gl_{c\ell}$ of $K[c\ell]$. Denote by $\Fr_{\gl_c/\ell}\in\Gal(K[c]/\Q)$ the arithmetic Frobenius for the extension $\gl_c/\ell$.

\begin{proposition}[Eichler--Shimura relation]\label{prop:Eichler-Shimura-relation-big-Heegner-points}
        Let $\ell\nmid Npc$ be a prime inert in $K$. Then $\kappa_{c\ell}$ and $\Fr_{\gl_c/\ell}(\kappa_c)$ have the same image in $H^1(K[c\ell]_{\gl_{c\ell}},\T^\dag)$.
\end{proposition}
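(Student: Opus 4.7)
The plan is to reduce the claimed congruence, via the definition of the big Heegner classes as inverse limits of Kummer images of CM divisors on a Shimura curve tower, to a classical Eichler--Shimura congruence at each finite level, and then pass to the limit.

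First, I would unpack the construction from \cite[§7]{Longo-Vigni11:quaternion-algebras-Hida-families}: for every $m\ge 1$ the class $\kappa_c$ is built, at level $m$, from a CM point $P_c^{(m)}$ on the Shimura curve $X_m$ (of level $\Gamma_0(N^+)\cap\Gamma_1(p^m)$, attached to the indefinite quaternion algebra ramified at the primes dividing $N^-$) of conductor $c p^m$, defined over $K[cp^m]$. Applying the Kummer map to $\Jac(X_m)(K[cp^m])$, then tensoring with $\OF$, applying $e^{\ord}$, localizing at $\mathfrak{m}_f$, and finally extending scalars to $\calR$ and twisting by $\Theta^{-1}$, yields $\kappa_c$ in the inverse limit. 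The analogous construction at conductor $c\ell$ gives $\kappa_{c\ell}$.

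Second, at each finite level $m$ I would prove the horizontal Eichler--Shimura congruence: for any prime $\lambda_{c\ell}^{(m)}$ of $K[c\ell p^m]$ above $\gl_{c\ell}$, the reductions of $P_{c\ell}^{(m)}$ and of $\Fr_{\gl_c/\ell}(P_c^{(m)})$ to the special fiber of a smooth integral model of $X_m$ at $\ell$ coincide. This is the standard CM--point reduction statement: since $\ell\nmid Np$ is inert in $K$, the CM abelian surface $A$ underlying $P_c^{(m)}$ has supersingular reduction modulo $\lambda$, its endomorphism ring becomes a maximal order in a quaternion algebra, and the isogeny of degree $\ell$ singled out by conductor $c\ell$ reduces to the relative Frobenius; this is exactly the Deuring--type argument used by Gross and Perrin-Riou in the modular case and transferred to the quaternionic setting in e.g.~Bertolini--Darmon. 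Good reduction of $\Jac(X_m)$ at $\ell$ (guaranteed by $\ell\nmid Np$) ensures compatibility of the Kummer map with reduction, so the equality lifts to
\begin{equation*}
    \text{Kum}(P_{c\ell}^{(m)}) \;=\; \Fr_{\gl_c/\ell}\bigl(\text{Kum}(P_c^{(m)})\bigr) \quad \text{in } H^1\bigl(K[c\ell p^m]_{\lambda_{c\ell}^{(m)}},\, \Ta_p \Jac(X_m)\bigr).
\end{equation*}

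Third, I would take the inverse limit over $m$ (compatibility with the degeneracy maps $X_{m+1}\to X_m$ is built into the construction), apply the ordinary projector and localization at $\m_f$, extend scalars to $\calR$, and twist by $\Theta^{-1}$. Since $\Theta$ is unramified at $\ell$ (because $\ell\ne p$ and $\Theta$ factors through $\e_{\cyc}$), the twist commutes with restriction to the decomposition group at $\gl_{c\ell}$ and with the Frobenius action of $\Fr_{\gl_c/\ell}$, so the congruence is preserved. Finally I would corestrict from $K[c\ell p^m]$ down to $K[c\ell]$ compatibly to land in $H^1(K[c\ell]_{\gl_{c\ell}},\T^\dag)$.

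The main technical obstacle is the first finite-level Eichler--Shimura relation in the quaternionic $\Gamma_1(p^m)$-setting, where one cannot argue directly with modular parametrizations. The cleanest path is to work with the smooth integral model of $X_m$ at $\ell$ provided by the Drinfeld--Carayol or Buzzard integral models (available since $\ell\nmid Np$) and to invoke the Serre--Tate/Deuring classification of CM points in supersingular reduction, checking that the auxiliary $\Gamma_1(p^m)$-structure is preserved under the Frobenius because $\ell\ne p$. Once this is in hand, the passage to $\T^\dag$ is formal.
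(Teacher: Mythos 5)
The paper's own proof of this proposition is a single citation to \cite[Proposition 8.7]{Longo-Vigni11:quaternion-algebras-Hida-families}; your proposal is, in essence, a correct reconstruction of the argument that citation points to, namely the finite-level reduction of CM points at the inert prime $\ell$ followed by compatibility of the Kummer map with reduction and passage to the limit, ordinary projection, and critical twist. Two points deserve attention, though neither is a genuine gap in the strategy. First, you reuse the letter $X_m$ for the Shimura curve attached to the quaternion algebra of discriminant $N^-$, whereas in this paper $X_m$ denotes the \emph{classical} modular curve of level $\Gamma_0(N)\cap\Gamma_1(p^m)$ used to construct $\T$; the big Heegner classes indeed come from CM points on the quaternionic Shimura curves, so after proving the finite-level congruence on those curves one must transfer to $\T^{\dag}$ via the Jacquet--Langlands isomorphism of Hecke modules, a step you leave implicit. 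Second, the claim that twisting by $\Theta^{-1}$ ``commutes'' with the Frobenius action is a little loose: because $\Theta$ is unramified at $\ell$, what actually happens is that both sides of the congruence pick up the same scalar $\Theta^{-1}(\Fr_\ell)$, so the equality of unramified cohomology classes is preserved; you should make that bookkeeping explicit rather than wave at commutation. With those two clarifications your outline is sound and matches the approach taken in the cited reference.
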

\begin{proof}
    See \cite[Proposition 8.7]{Longo-Vigni11:quaternion-algebras-Hida-families}.
\end{proof}

\begin{remark}
    Propositions \ref{prop:properties-big-Heegner-points} and \ref{prop:Eichler-Shimura-relation-big-Heegner-points} imply that the classes $\kappa_c$ form a $p$-complete anticyclotomic Euler system, in the language of \cite{mastella-zerman2025:anticyclotomic}. Indeed, part of this paper can be seen as a motivating example of the theory developed therein. See \cite[§4.3]{mastella-zerman2025:anticyclotomic} for further links between these two articles.
\end{remark}

\subsection{Anticyclotomic twists}\label{sec:anticyclotomic-twists}

For every $t\in\Z_{>0}$, let $K_t$ be the maximal $p$-extension in $K[p^{t+1}]/K$ and set $K_0:=K$. By \cite[Theorem 7.24]{Cox22:primes-of-the-form} and Assumption \ref{ass:field-K}, we have that $[K_t:K]=p^t$. 

\begin{definition}
    The field $K_{\infty}:=\bigcup_{t\ge 0} K_t$ is called the \emph{anticyclotomic $\Z_p$-extension} of $K$.
\end{definition}

Let now $n\in\Z_{>0}$ coprime with $Np$ and set $K_t[n]$ to be the composite of $K_t$ and $K[n]$. The fields $K_t$ and $K[n]$ are disjoint over $K$ thanks to ramification issues and Assumption \ref{ass:field-K}. In particular, this implies that any prime $v$ of $K$ that lies above $p$ is totally ramified in $K_{t}$.

\begin{definition}
     Define $\Gamma^{\ac}:=\Gal(K_{\infty}/K)\cong \Z_p$ and $\gL^{\ac}:=\Z_p\llbracket \Gamma^{\ac}\rrbracket$. Moreover, fix a profinite generator $\gamma_{\ac}$ of $\Gamma^{\ac}$.
\end{definition}

\begin{definition}
    Call $\mathcal{R}^{\ac}=\mathcal{R}\llbracket \Gamma^{\ac}\rrbracket\cong\mathcal{R} \ \hat{\otimes}_{\Z_p} \gL^{\ac}$ and set $\T^{\ac}=\T^{\dag}\otimes_{\mathcal{R}}\mathcal{R}^{\ac}\cong\T^{\dag}\hat{\otimes}_{\Z_p}\gL^{\ac}$. Here, the symbol $\hat{\otimes}$ denotes the completed tensor product between topological $\Z_p$-modules.
\end{definition}

We make $\mathcal{R}^{\ac}$ a left $G_K=\Gal(\bar{\Q}/K)$-module via the natural projection $G_K\twoheadrightarrow \Gamma^{\ac}$. Similarly, $\T^{\ac}$ is a $G_K$-module via the action coming from both sides of the tensor product. For our intended applications in anticyclotomic Iwasawa theory, we will mainly work with the representation $\T^{\ac}$ and some of its quotients.

\begin{lemma}\label{lem:residual-representation-Iwasawa-irreducible}
    The $\mathcal{R}^{\ac}$-module $\T^{\ac}$ is free of rank $2$. As a $G_K$-representation, it is unramified outside $Np$. Its residual $G_K$-representation is isomorphic to $\bar{\T}^{\dag}$.
\end{lemma}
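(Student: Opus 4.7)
The plan is to verify the three assertions in turn; each reduces to a direct inspection, relying on Proposition \ref{prop:characteristic-polynomial-Frobenius} (which supplies the analogous facts for $\T^{\dag}$) together with standard properties of the anticyclotomic $\Z_p$-extension.

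First, for freeness: since $\T^{\dag}$ is free of rank $2$ over $\mathcal{R}$ by Proposition \ref{prop:characteristic-polynomial-Frobenius}, and $\T^{\ac}=\T^{\dag}\otimes_{\mathcal{R}}\mathcal{R}^{\ac}$ by definition, extension of scalars along the flat map $\mathcal{R}\to \mathcal{R}^{\ac}$ produces a free $\mathcal{R}^{\ac}$-module of rank $2$. Second, for the ramification: the $G_K$-action on $\T^{\ac}$ is diagonal, combining the action on $\T^{\dag}$ (which is unramified outside $Np$) with the action on $\mathcal{R}^{\ac}$ obtained from the surjection $G_K\twoheadrightarrow\Gamma^{\ac}=\Gal(K_{\infty}/K)$. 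The key observation, already used implicitly in the paragraph preceding the lemma statement, is that the anticyclotomic $\Z_p$-extension $K_{\infty}/K$ is unramified away from the primes of $K$ above $p$ (in fact, primes above $p$ are totally ramified in $K_{\infty}/K$). Hence the $\mathcal{R}^{\ac}$-factor contributes ramification only at $p$, and the tensor product is unramified outside $Np$.

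Third, for the identification of the residual representation: the maximal ideal $\m_{\mathcal{R}^{\ac}}$ of the complete local ring $\mathcal{R}^{\ac}=\mathcal{R}\,\hat{\otimes}_{\Z_p}\gL^{\ac}$ is generated by $\m_{\mathcal{R}}$ together with the augmentation ideal $(\gamma_{\ac}-1)\gL^{\ac}$. Reducing modulo $\m_{\mathcal{R}^{\ac}}$ therefore kills both the $\mathcal{R}$-coefficients of $\T^{\dag}$ down to its residue representation and the $\Gamma^{\ac}$-action (which becomes trivial because the augmentation ideal is sent to zero), yielding a canonical $G_K$-equivariant isomorphism
\begin{equation*}
    \T^{\ac}/\m_{\mathcal{R}^{\ac}}\T^{\ac}\;\cong\;\T^{\dag}/\m_{\mathcal{R}}\T^{\dag}\;=\;\bar{\T}^{\dag}.
\end{equation*}

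There is no serious obstacle here: the lemma is essentially a bookkeeping statement about base change along $\mathcal{R}\hookrightarrow\mathcal{R}^{\ac}$. The only point that requires a moment of care is the identification of $\m_{\mathcal{R}^{\ac}}$ and the verification that primes of $K$ not lying above $p$ are unramified in $K_{\infty}/K$; both are standard and can be invoked without further computation.
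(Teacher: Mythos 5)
Your proposal is correct and follows essentially the same route as the paper: freeness and ramification follow from Proposition \ref{prop:characteristic-polynomial-Frobenius} together with the fact that $K_\infty/K$ is ramified only above $p$, and the residual identification comes from observing that $\m_{\mathcal{R}^{\ac}}$ contains both $\m_{\mathcal{R}}$ and $\gamma_{\ac}-1$ (the paper makes explicit the auxiliary point you use implicitly, namely that $\mathcal{R}^{\ac}\cong\mathcal{R}\llbracket X\rrbracket$ forces $\F_{\mathcal{R}^{\ac}}\cong\F_{\mathcal{R}}$, so no residue-field extension occurs).
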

\begin{proof}
    The first two statements are a direct consequence of Proposition \ref{prop:characteristic-polynomial-Frobenius} and the fact that $\Gamma^{\ac}$ does not contain inertia outside $p$.

    If $\m^{\ac}$ denotes the maximal ideal of $\mathcal{R}^{\ac}$, there is a canonical isomorphism of $G_K$-modules
    \begin{equation*}
        \T^{\ac}/\m^{\ac}\T^{\ac}=\T^\dag/\m_{\mathcal{R}}\T^\dag\otimes_{\F_\mathcal{R}}\F_{\mathcal{R}^{\ac}}
    \end{equation*}
    where $\F_\mathcal{R}$ is the residue field of $\mathcal{R}$ and $\F_{\mathcal{R}^{\ac}}$ is the residue field of $\mathcal{R}^{\ac}$. Since $[\gamma_{\ac}]-1\in\m^{\ac}$, the action of $G_K$ on $\F_{\mathcal{R}^{\ac}}$ is trivial. Moreover, the isomorphism $\mathcal{R}^{\ac}\cong \mathcal{R}\llbracket X\rrbracket$ as $\mathcal{R}$-algebras (see \cite[Proposition 5.3.5]{Neukirch-Schmidt-Wingberg:cohomology-of-number-fields}) implies that $\F_{\mathcal{R}^{\ac}}\cong \F_\mathcal{R}$ as fields. 
\end{proof}

\subsection{Quotients, Galois groups and primes}\label{sec:quotients-galois-groups-and-primes}

Inspired by the setup of \cite{buyukboduk2011:lambda, Buyukboduk:big-Heegner-point-kolyvagin-system, Buyukboduk:deformations-of-kolyvagin-systems}, we define some relevant classes of quotients of $\mathcal{R}$, $\mathcal{R}^\ac$, $\T^\dag$ and $\T^\ac$. For every $m,s\in\Z_{>0}$ and $t\ge 0$, define $\calR_m:=\mathcal{R}/(\omega_{2,m})$, $\calR_{m,s}:=\mathcal{R}/(\omega_{2,m},p^s)$ and $\calR_{t,m,s}:=\mathcal{R}^{\ac}/(\omega_{2,m}, p^s, \gamma_{\ac}^{p^t}-1)$. Define also 
    \begin{equation*}
        \T_m:=\T^\dag\otimes_{\mathcal{R}}\calR_m,\quad \T_{m,s}:=\T^\dag\otimes_{\mathcal{R}}\calR_{m,s}\quad\text{and}\quad \T_{t,m,s}:=\T^{\ac}\otimes_{\mathcal{R}^{\ac}}\calR_{t,m,s}.
    \end{equation*}

\begin{lemma}
    Let $m,s\in\Z_{>0}$ and $t\ge 0$. Then
    \begin{itemize}
        \item[(a)] $\omega_{2,m}\in\m_\calR^{m}$.
        \item[(b)] $\calR_m$ is finite and free over $\OF$.
        \item[(c)] $\calR_{m,s}$ and $\calR_{t,m,s}$ are finite of $p$-power order.
        \item[(d)] $\calR=\varprojlim_{m,s}\calR_{m,s}$ and $\calR^{\ac}=\varprojlim_{t,m,s}\calR_{t,m,s}$.
    \end{itemize}
\end{lemma}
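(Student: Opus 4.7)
\emph{Proof plan.}

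For (a), I would argue by induction on $m$. Writing $x:=[\gamma]-1\in\m_\calR$, one has $\omega_{2,m}=(1+x)^{p^{m-1}}-1$; the base $m=1$ is immediate. For the inductive step, set $y:=(1+x)^{p^{m-2}}-1\in\m_\calR^{m-1}$ by hypothesis. Then
\begin{equation*}
    \omega_{2,m}=(1+y)^p-1=\sum_{k=1}^{p-1}\binom{p}{k}y^k+y^p.
\end{equation*}
For $1\le k\le p-1$, the binomial coefficient lies in $p\Z\subseteq\m_\calR$, so each summand belongs to $\m_\calR^{1+k(m-1)}\subseteq\m_\calR^m$; the term $y^p$ is in $\m_\calR^{p(m-1)}\subseteq \m_\calR^m$ because $p(m-1)\ge m$ for $m\ge 2$.

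For (b), I would first analyze $\gL_F/(\omega_{2,m})$. Under the identification $\gL_F=\OF\llbracket T\rrbracket$ with $T=[\gamma]-1$, Kummer's theorem on $p$-adic valuations of binomial coefficients shows that all coefficients of $\omega_{2,m}=(1+T)^{p^{m-1}}-1$, apart from the leading one in degree $p^{m-1}$, are divisible by $p$. Hence by Weierstrass preparation $\omega_{2,m}$ is a distinguished polynomial (up to unit) of degree $p^{m-1}$, so $\gL_F/(\omega_{2,m})$ is free of rank $p^{m-1}$ over $\OF$. Since $\calR$ is free over $\gL_F$ by Lemma \ref{lem:algebraic-properties-of-R}, tensoring yields that $\calR_m$ is free of finite rank over $\OF$.

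For (c), note that $\OF/p^s$ is a finite ring of $p$-power order since $\OF$ is a finitely generated $\Z_p$-module. Combined with (b), the ring $\calR_{m,s}=\calR_m/p^s\calR_m$ is therefore finite of $p$-power order. For $\calR_{t,m,s}$, I would first observe that $\calR^{\ac}/(\gamma_\ac^{p^t}-1)\cong \calR[\Gamma^\ac/(\gamma_\ac^{p^t})]$ is free of rank $p^t$ over $\calR$, so further quotienting by $(\omega_{2,m},p^s)$ gives a module that is free of rank $p^t$ over $\calR_{m,s}$ and hence finite of $p$-power order.

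For (d), the key observation is that the topologies defined on $\calR$ (resp.\ $\calR^{\ac}$) by the ideals $\{(\omega_{2,m},p^s)\}$ (resp.\ $\{(\omega_{2,m},p^s,\gamma_\ac^{p^t}-1)\}$) and by powers of the maximal ideal coincide. One direction is immediate from part (a) (together with its evident analogue $\gamma_\ac^{p^t}-1\in(\m^{\ac})^{t+1}$ proved by the same induction). For the converse, finiteness from part (c) ensures the maximal ideal of each $\calR_{m,s}$ (resp.\ $\calR_{t,m,s}$) is nilpotent, so a sufficiently high power of $\m_\calR$ (resp.\ $\m^{\ac}$) is contained in $(\omega_{2,m},p^s)$ (resp.\ $(\omega_{2,m},p^s,\gamma_\ac^{p^t}-1)$). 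Combined with the completeness of $\calR$ and $\calR^{\ac}$ with respect to their maximal ideals, this yields the two claimed isomorphisms. None of the steps present a serious obstacle; the most delicate point is the induction in (a), which underpins both the topological comparison in (d) and the structural description in (b).
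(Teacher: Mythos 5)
Your proof is correct and follows essentially the same route as the paper's: part (a) is the classical computation that the paper cites from Washington (you simply carry out the induction explicitly), part (b) is the same Weierstrass/distinguished-polynomial argument combined with freeness of $\calR$ over $\gL_F$, and part (d) is the same comparison of the $\m$-adic topology with the topology defined by the ideals $(\omega_{2,m},p^s,\gamma_{\ac}^{p^t}-1)$, using nilpotence of the maximal ideals of the finite quotients from (c). No gaps.
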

\begin{proof}
    (a) Recall that $\omega_{2,m}=[\gamma]^{p^{m-1}}-\gamma^{p^{m-1}}$, where $\gamma$ is a profinite generator of $\Gamma=1+p\Z_p$. This implies that $\gamma^{p^{m-1}}\in \Gamma^{p^{m-1}}=1+p^m\Z_p$. A classical computation (see \cite[p.116]{Washington:cyclotomic-fields}) yields also that $[\gamma]^{p^{m-1}}-1\in \m_{\Lambda_F}^{m}$, therefore $\omega_{2,m}\in\m_\calR^m$. 

    (b) By Lemma \ref{lem:algebraic-properties-of-R}, we know that $\calR$ is finite and free over $\gL_F$, therefore $\calR_m$ is finite and free over $\gL_F/(\omega_{2,m})$. Since $\omega_{2,m}$ corresponds to a distinguished polynomial of $\OF\llbracket X\rrbracket$ (see \cite[p.116]{Washington:cyclotomic-fields}), the division by $\omega_{2,m}$ induces an isomorphism of $\gL_F/(\omega_{2,m})$ with the elements of $\OF[X]$ of degree $<n$ (see e.g.~\cite[Proposition 7.2]{Washington:cyclotomic-fields}), which is a finitely generated free $\OF$-module.

    (c) Clear from point (b).

    (d) We prove the equality only for $\calR^{\ac}$, since the other one is dealt in the exact same way. Let $\m_{t,m,s}$ be the maximal ideal of $\calR_{t,m,s}$. By Nakayama's lemma, there is an exponent $e_{t,m,s}\in\Z_{>0}$ such that $\m_{t,m,s}^{e_{t,m,s}}=\{0\}$. Therefore, by the proof of point (a), we have that
    \begin{equation*}
        \m_{\calR^\ac}^{e_{t,m,s}}\subseteq (\omega_{2,m}, p^s, \gamma_{\ac}^{p^t}-1)\subseteq \m_{\calR^\ac}^{f_{t,m,s}},
    \end{equation*}
    where $f_{t,m,s}:=\min\{t,m,s\}$. Therefore, the topology on $\calR^\ac$ induced by the family of ideals $(\omega_{2,m}, p^s, \gamma_{\ac}^{p^t}-1)$ coincides with the $\m_{\calR^\ac}$-topology, giving $\calR^{\ac}=\varprojlim_{t,m,s}\calR_{t,m,s}$.
\end{proof}

    \begin{definition}\label{dfn:admissible-primes}
    Let $m,s\in\Z_{>0}$.
	\begin{itemize}
			\item[(a)] Define $\mathcal{P}$ to be the set of all primes $\ell$ inert in $K$ such that $\ell\nmid Np$.
			\item[(b)] Define $\mathcal{P}_{m,s}$ to be the subset of $\mathcal{P}$ made by all primes $\ell$ such that the arithmetic Frobenius $\Fr_\ell$ is conjugated with the complex conjugation $\tau_c$ in $\Gal(K(\T_{m,s})/\Q)$.
			\item[(c)] Define the sets $\mathcal{N}$ and $\mathcal{N}_{m,s}$ to be the sets of all square-free products of elements of $\mathcal{P}$ and $\mathcal{P}_{m,s}$, respectively.
                \item[(d)] For $n \in\mathcal{N}$, define $\mathcal{G}_n:=\Gal(K[n]/K[1])$.
	\end{itemize}
    \end{definition}

    By the Chebotarev density theorem, each set $\mathcal{P}_{m,s}$ consists of infinitely many primes and $\mathcal{P}_{m,s}\subseteq\mathcal{P}_{m',s'}$ whenever $m\ge m'$ and $s\ge s'$. For every $n\in\mathcal{N}$ we also have that $\mathcal{G}_n\cong \prod_{\ell\mid n}\mathcal{G}_\ell$, where the product varies among all prime divisors of $n$, and each group $\mathcal{G}_\ell$ is cyclic of order $\ell+1$ (see e.g.~\cite[§3]{Gross91:Kolyvagins-work}).

    \begin{lemma}\label{lem:Tl-acts-as-0}
        Let $\ell\in\mathcal{P}_{m,s}$ and call $\gl$ the prime of $K$ above $\ell$. Then
        \begin{itemize}
            \item[(i)] The arithmetic Frobenius $\Fr_\lambda$ acts trivially on $\T_{t,m,s}$, for every $t\in\Z_{>0}$. 
            \item[(ii)] $\ell\equiv -1\bmod p^s$ and the image of the Hecke operator $T_\ell$ in $\calR_m$ is divisible by $p^s$.
        \end{itemize}
    \end{lemma}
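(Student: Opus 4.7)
The plan is to exploit the hypothesis $\Fr_\ell \sim \tau_c$ in $\Gal(K(\T_{m,s})/\Q)$ in three ways: by squaring (to produce the identity on $\T_{m,s}$), by comparing determinants (to get the congruence on $\ell$), and by comparing traces of the $G_\Q$-action on $\T^\dag$ (to handle $T_\ell$).

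First, for (i), I would decompose
\[\T_{t,m,s}\cong \bigl(\T^\dag/(\omega_{2,m},p^s)\T^\dag\bigr)\otimes_{\calR_{m,s}}\calR^\ac/(\omega_{2,m},p^s,\gamma_\ac^{p^t}-1),\]
with the second factor a $G_K$-module via the projection $G_K\twoheadrightarrow\Gamma^\ac$, and check triviality on each factor separately. Since $\ell$ is inert in $K$, $\Fr_\gl$ acts on $\T^\dag$ in the same way as $\Fr_\ell^2\in G_K$; the conjugacy $\Fr_\ell\sim\tau_c$ yields $\Fr_\ell^2\sim\tau_c^2=1$ in $\Gal(K(\T_{m,s})/\Q)$, so $\Fr_\gl$ acts trivially on the first factor. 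For the anticyclotomic factor, recall that $\tau_c$ conjugates $\Gamma^\ac$ by inversion (a defining property of the anticyclotomic $\Z_p$-extension). Fixing a lift $\tilde\tau_c$ of $\tau_c$ in $\Gal(K_\infty/\Q)$ of order $2$ (e.g.\ the restriction of complex conjugation) and writing $\Fr_\ell=\sigma\tilde\tau_c$ with $\sigma\in\Gamma^\ac$, one computes
\[\Fr_\gl=\Fr_\ell^2=\sigma\,(\tilde\tau_c\sigma\tilde\tau_c^{-1})\,\tilde\tau_c^2=\sigma\sigma^{-1}=1\quad\text{in }\Gamma^\ac,\]
so the anticyclotomic action is trivial and (i) follows for every $t\ge 1$.

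For (ii), the alternating self-duality \eqref{eq:pairing} identifies $\det_\calR(\T^\dag)\cong\calR(1)$, so the determinant of the $G_\Q$-representation $\T^\dag$ is the $p$-adic cyclotomic character $\e_\cyc$. Equating determinants on $\T_{m,s}$ via the conjugacy gives $\ell\equiv-1\pmod{(\omega_{2,m},p^s)}$ in $\calR$. Since $\calR$ is free over $\gL_F$ and $\omega_{2,m}$ is a distinguished polynomial, the contraction of $(\omega_{2,m},p^s)$ to $\Z_p$ is exactly $p^s\Z_p$, yielding $\ell\equiv-1\pmod{p^s}$. For $T_\ell$, Proposition \ref{prop:characteristic-polynomial-Frobenius} gives $\tr(\Fr_\ell)=\Theta^{-1}(\Fr_\ell)T_\ell$. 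On the other hand, $\tau_c^2=1$ together with $\det(\tau_c)=-1$ force the $\tau_c$-eigenvalues on $\T^\dag$ to be $+1$ and $-1$, each eigenspace being free of rank $1$ over the local ring $\calR$; hence $\tr(\tau_c)=0$. Conjugacy then gives $\Theta^{-1}(\Fr_\ell)T_\ell\equiv 0$ in $\calR_{m,s}$, and since $\Theta^{-1}(\Fr_\ell)\in\gL_F^\times\subseteq\calR^\times$ is a unit, we conclude that $T_\ell\in(\omega_{2,m})+p^s\calR$, i.e.\ the image of $T_\ell$ in $\calR_m$ is divisible by $p^s$.

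The main subtlety is the anticyclotomic half of (i): the assumption on $\mathcal{P}_{m,s}$ controls $\Fr_\ell$ only modulo $K(\T_{m,s})$ and not modulo $K(\T_{t,m,s})$, so the triviality of the anticyclotomic action is not formal from the hypothesis alone. It follows instead from the independent identity $\tilde\tau_c\sigma\tilde\tau_c^{-1}=\sigma^{-1}$ on $\Gamma^\ac$, which automatically forces $\Fr_\ell^2$ to be trivial in $\Gamma^\ac$ for every prime $\ell$ inert in $K$. With that in hand, everything else reduces to standard determinant and trace computations using Proposition \ref{prop:characteristic-polynomial-Frobenius} and the self-duality of $\T^\dag$.
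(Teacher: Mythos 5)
Your proof is correct and follows essentially the same approach as the paper. For part (i), the paper splits the action the same way, handling $\T_{m,s}$ via the conjugacy $\Fr_\ell\sim\tau_c$ and the anticyclotomic factor separately; your explicit derivation that $\Fr_\ell^2$ is trivial in $\Gamma^\ac$ from the dihedral relation $\tilde\tau_c\sigma\tilde\tau_c^{-1}=\sigma^{-1}$ is exactly the class-field-theory fact the paper cites (that $\gl$ splits completely in $K_\infty/K$). For part (ii), the paper compares characteristic polynomials of $\tau_c$ and $\Fr_\ell$ directly; you compare determinants and traces separately, which for rank-$2$ modules is the same computation, and your extra care in contracting $(\omega_{2,m},p^s)$ to $\Z_p$ to get the genuine congruence $\ell\equiv -1\bmod p^s$ is a worthwhile explicit step the paper leaves implicit.
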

    \begin{proof}
        (i) The element $\Fr_\gl=\Fr_\ell^2$ acts trivially on $\T_{m,s}$ by condition (b) of Definition \ref{dfn:admissible-primes} and it acts trivially on $\gL^{\ac}$ since $\gl$ is split in $K_\infty/K$ by class field theory.

        (ii) As explained after Equation \eqref{eq:pairing} (see also \cite[Lemma 4.1.15]{Zerman24:PhD-thesis}), the characteristic polynomial for the action of $\tau_c$ on $\T_m$ is $X^2-1$. By comparing this polynomial with the characteristic polynomial of $\Fr_\ell$ computed in Proposition \ref{prop:characteristic-polynomial-Frobenius}, we obtain the claimed relations.
    \end{proof}

\begin{lemma}
 \label{lem:charachteristic-polynomial-frobenius-Tm}
     Let $s\ge m$ and $\ell\in\mathcal{P}_{m,s}$. Then, $\Fr_\ell$ acts on $\T_m$ with characteristic polynomial
     \begin{equation*}
         X^2-(-1)^{\frac{k+j}{2}-1} T_\ell X+\ell.
     \end{equation*}
 \end{lemma}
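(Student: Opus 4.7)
The plan is to reduce modulo $\omega_{2,m}$ the characteristic polynomial already computed in Proposition \ref{prop:characteristic-polynomial-Frobenius}, which acts on $\T^\dag$ via $X^2 - \Theta^{-1}(\Fr_\ell)T_\ell X + \ell$. The only nontrivial task is therefore to show that $\Theta^{-1}(\Fr_\ell) \equiv (-1)^{\frac{k+j}{2}-1} \pmod{\omega_{2,m}}$, since the resulting polynomial is then exactly the one in the statement.

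To do this I would split the critical character according to its definition, $\Theta = \e_\ta^{\frac{k+j}{2}-1}\,[\e_\wi^{1/2}]$, and analyse the two factors separately. From Lemma \ref{lem:Tl-acts-as-0}(ii) we have $\ell \equiv -1 \pmod{p^s}$, so in particular $\ell \equiv -1 \pmod{p}$. Since the Teichm\"uller lift of $-1 \in \F_p^\times$ is simply $-1 \in \Z_p^\times$, this gives $\e_\ta(\Fr_\ell) = -1$ and therefore the tame contribution is the scalar $(-1)^{\frac{k+j}{2}-1}$.

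For the wild factor, I would use $\e_\cyc = \e_\ta \cdot \e_\wi$ to obtain $\e_\wi(\Fr_\ell) = \ell \cdot \e_\ta(\Fr_\ell)^{-1} = -\ell$, which lies in $1 + p^s\Z_p$ because $\ell \equiv -1 \pmod{p^s}$. The key arithmetic point is that for $p$ odd, squaring on $\Gamma = 1+p\Z_p$ preserves the valuation of $x-1$ (one checks this directly by expanding $(1+p^a u)^2 = 1 + 2p^a u + p^{2a}u^2$ and observing $2a > a$). Hence the unique square root $\e_\wi^{1/2}(\Fr_\ell) \in \Gamma$ also lies in $1+p^s\Z_p = \Gamma^{p^{s-1}}$. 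Since $s \ge m$, this element belongs to $\Gamma^{p^{m-1}}$, and by the explicit form $\omega_{2,m} = [\gamma]^{p^{m-1}} - 1$ the class $[g]$ is trivial modulo $\omega_{2,m}$ for every $g \in \Gamma^{p^{m-1}}$. Thus $[\e_\wi^{1/2}(\Fr_\ell)] \equiv 1 \pmod{\omega_{2,m}}$.

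Combining, $\Theta(\Fr_\ell) \equiv (-1)^{\frac{k+j}{2}-1} \pmod{\omega_{2,m}}$, and since this scalar is its own inverse, the same holds for $\Theta^{-1}(\Fr_\ell)$. Substituting into Proposition \ref{prop:characteristic-polynomial-Frobenius} yields the desired polynomial for the action on $\T_m = \T^\dag/\omega_{2,m}\T^\dag$. The main technical obstacle here is the valuation-preservation of squaring on $\Gamma$ used above; everything else is bookkeeping with the splitting $\e_\cyc = \e_\ta \e_\wi$ and the explicit description of $\omega_{2,m}$.
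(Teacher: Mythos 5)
Your proof is correct and follows essentially the same route as the paper: reduce the characteristic polynomial of Proposition \ref{prop:characteristic-polynomial-Frobenius} and show $\Theta(\Fr_\ell)\equiv(-1)^{\frac{k+j}{2}-1}$ in $\calR_m$ using $\ell\equiv-1\bmod p^s$ and the fact that the square root of an element of $1+p^s\Z_p$ stays in $1+p^s\Z_p=\Gamma^{p^{s-1}}$, whose image in $\calR_m$ is trivial. The only cosmetic difference is that you verify the vanishing of $[\e_\wi^{1/2}(\Fr_\ell)]-1$ directly from the formula $\omega_{2,m}=[\gamma]^{p^{m-1}}-1$, whereas the paper invokes Theorem \ref{thm:first-Hida-structure-theorem}.
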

 \begin{proof}
     In light of Proposition \ref{prop:characteristic-polynomial-Frobenius}, it is enough to prove that $\Theta(\Fr_\ell)=(-1)^{\frac{k+j}{2}-1}$ in $\calR_m$. Since $\ell\equiv -1\bmod{p^s}$ and $\e_{\cyc}(\Fr_\ell)=\ell$, we obtain that
     \begin{equation*}
         \Theta(\Fr_\ell)=(-1)^{\frac{k+j}{2}-1}[x^{\frac{1}{2}}]\in\gL_F^\times
     \end{equation*}
     for some $x\in1+p^s\Z_p$, where $x^{\frac{1}{2}}$ is the unique square root of $x$ in $1+p\Z_p$. Since $p>2$, the element $x^{\frac{1}{2}}$ lies in $1+p^s\Z_p=\Gamma^{p^{s-1}}$. Since $s\ge m$, by Theorem \ref{thm:first-Hida-structure-theorem} we know that the image of $\Gamma^{p^{s-1}}$ in $\calR_m$ is trivial.
 \end{proof}

\subsection{Galois invariants over abelian extensions of $K$}

The following lemma is a variant of Nakayama's lemma.

\begin{lemma}\label{lem:nakayama-for-Galois-representation}
    Let $A$ be a local Noetherian ring with maximal ideal $\m_A$ and $T$ be a finitely generated $A$-torsion-free module. Let also $G$ be a topological group that acts continuously and $A$-linearly on $T$. If $H^0(G,T/\mathfrak{m}_A T)=\{0\}$, then $H^0(G,T)=\{0\}$.
\end{lemma}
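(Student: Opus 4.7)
The plan is to iterate the hypothesis along the $\m_A$-adic filtration of $T$ and then invoke Krull's intersection theorem. Set $M:=H^0(G,T)$; the goal is to show $M=0$. Since $G$ acts $A$-linearly, every power $\m_A^n T$ is a $G$-stable submodule of $T$, and the short exact sequence of $A[G]$-modules
\[
0\longrightarrow \m_A T\longrightarrow T\longrightarrow T/\m_A T\longrightarrow 0
\]
yields, after taking $G$-invariants, a left-exact sequence whose last term vanishes by hypothesis. This forces $M=H^0(G,\m_A T)\subseteq \m_A T$.

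Next, I would prove by induction on $n\geq 1$ that $M\subseteq \m_A^n T$. Assuming the inclusion at level $n$, the analogous short exact sequence for the pair $\m_A^{n+1}T\subseteq \m_A^n T$ upgrades this to $M\subseteq \m_A^{n+1}T$, provided the graded piece $\m_A^n T/\m_A^{n+1}T$ has no nonzero $G$-invariants. In the applications of interest the module $T$ is free over $A$ (compare Lemmas \ref{lem:algebraic-properties-of-R} and \ref{lem:residual-representation-Iwasawa-irreducible}), so the graded piece identifies with $(\m_A^n/\m_A^{n+1})\otimes_{A/\m_A}(T/\m_A T)$ as an $A[G]$-module, with trivial $G$-action on the first factor. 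Hence
\[
H^0\!\big(G,\m_A^n T/\m_A^{n+1}T\big)\cong \big(\m_A^n/\m_A^{n+1}\big)\otimes_{A/\m_A} H^0\!\big(G,T/\m_A T\big)=0,
\]
closing the induction.

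Finally, since $T$ is finitely generated over the Noetherian local ring $A$, Krull's intersection theorem gives $\bigcap_{n\ge 1}\m_A^n T=0$, and combining this with the inductive step forces $M=0$. The delicate point will be the identification of the graded piece, which rests on the vanishing $\mathrm{Tor}_1^A(A/\m_A,T)=0$: this holds whenever $T$ is $A$-flat (in particular free), whereas mere torsion-freeness over a general Noetherian local ring would not suffice (e.g.\ $\m_A\subseteq A=k[[s,t]]$ is torsion-free but not flat). This is harmless here since the lemma is invoked only for free modules such as $\T^\dag$, $\T^{\ac}$ and their truncations.
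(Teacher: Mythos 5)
Your argument is correct for free $T$ (equivalently, for finitely generated flat $T$ over a local ring), and as you note this covers every invocation in the paper: $\T^\dag$, $\T^\ac$, $F_v^-(\T^\ac)$ and all their truncations and specializations are free over the relevant base. The route is genuinely different from the paper's. After the common first step $T^G\subseteq\m_A T$, the paper passes immediately to $T^G\subseteq\m_A T^G$ by writing a nonzero $x\in T^G\subseteq\m_A T$ as $x=ay$ with $a\in\m_A\setminus\{0\}$ and $y\in T$, so that $a(\sigma(y)-y)=0$ and torsion-freeness gives $y\in T^G$; Nakayama's lemma then finishes. You instead climb the $\m_A$-adic filtration one graded piece at a time, using the identification $\m_A^nT/\m_A^{n+1}T\cong(\m_A^n/\m_A^{n+1})\otimes_{A/\m_A}(T/\m_A T)$ to push $T^G$ into $\m_A^{n+1}T$, and close with Krull's intersection theorem. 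Your device (kill each graded invariant) replaces the paper's (produce a $G$-invariant preimage of $x$), and costs you the flatness that you correctly flag.

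You are also right to be uneasy about the stated hypothesis ``torsion-free'', and it is worth noting that the same unease applies to the paper's own proof. The step ``write $x\in\m_A T$ as $x=ay$ with $a\in\m_A$'' implicitly asserts $\m_A T=\bigcup_{a\in\m_A}aT$, which fails once $\m_A$ is not principal: over $A=k\llbracket s,t\rrbracket$ the vector $(s,t)\in\m_A A^2$ is not of the form $a\cdot(y_1,y_2)$ with $a\in\m_A$, since such an $a$ would divide both $s$ and $t$ and hence be a unit. Since the lemma is applied with $A$ equal to $\calR$, $\calR^\ac$ and their quotients, which are not discrete valuation rings, both arguments really only run under the hypothesis ``$T$ free''; yours has the merit of making this limitation explicit, while the paper's proof would need the same repair to reach the stated generality.
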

\begin{proof}
    The submodule $(T^G+\mathfrak{m}_A T)/\mathfrak{m}_A T$ is contained in $(T/\mathfrak{m}_A T)^G$, hence it is trivial. This implies that $T^G\subseteq \mathfrak{m}_A T$. Then, we can write any nonzero element $x\in T^G$ as $x=ay$ with $a\in\mathfrak{m}_A\setminus\{0\}$ and $y\in T$. For every $\gs\in G$, one has the relation
    \begin{equation*}
        0=\gs(ay)-ay=a(\gs(y)-y).
    \end{equation*}
    Since $T$ is $A$-torsion-free, we obtain that $\gs(y)-y=0$, i.e.~$y\in T^G$. Hence, $T^G\subseteq \mathfrak{m}_A T^G$. Since $T$ is finitely generated over the Noetherian ring $A$, then also $T^G$ is finitely generated over $A$, and we conclude by applying Nakayama's lemma.
\end{proof}

We now apply this criterion to show that some relevant representations have no invariants over the absolute Galois group of some ring class fields. Once again, fix $m,s\in\Z_{>0}$, $t\ge 0$ and call $D_K$ the discriminant of $K$.

\begin{lemma}\label{lem:no-invariants}
    Let $n$ be a positive integer coprime with $NpD_K$ and let $T$ be any of $\T^{\dag}$, $\T^{\ac}$, $\T_m$, $\T_{m,s}$ and $\T_{t,m,s}$. Then $H^0(K[n],T)=\{0\}$.
\end{lemma}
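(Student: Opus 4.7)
My plan is to reduce the vanishing for each of the five modules to the single residual statement $H^0(K[n], \bar{\T}^\dag) = 0$, and then to deduce that statement from the absolute irreducibility of $\bar{\T}^\dag$ together with a ramification obstruction on its determinant.

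For $\T^\dag$ and $\T^\ac$, which are free (hence torsion-free) over the local Noetherian domains $\calR$ and $\calR^\ac$ respectively, Lemma \ref{lem:nakayama-for-Galois-representation} applies directly: the residue modulo the maximal ideal is (isomorphic to) $\bar{\T}^\dag$ by Lemma \ref{lem:residual-representation-Iwasawa-irreducible}. For the finite-length quotients $\T_{m,s}$ and $\T_{t,m,s}$, which are free modules over the Artinian local rings $\calR_{m,s}$ and $\calR_{t,m,s}$, I would iterate the short exact sequences
\begin{equation*}
    0 \longrightarrow \m_A^{i+1} T \longrightarrow \m_A^i T \longrightarrow \m_A^i T/\m_A^{i+1} T \longrightarrow 0
\end{equation*}
and take $G_{K[n]}$-invariants: each graded piece is isomorphic to a finite direct sum of copies of $\bar{\T}^\dag$, and nilpotence of $\m_A$ in the Artinian case closes the induction. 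For $\T_m$, whose base ring $\calR_m$ is Noetherian of Krull dimension $1$ but not Artinian, I would instead use that $\T_m$ is a finitely generated free $\OF$-module (since $\calR_m$ is free over $\OF$) and apply Lemma \ref{lem:nakayama-for-Galois-representation} with $A = \OF$, then reduce $H^0(K[n], \T_m/\pi_F \T_m) = 0$ by the very same Artinian iteration inside the now finite local ring $\calR_m/\pi_F \calR_m$.

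It remains to establish the residual claim. The ring class field $K[n]$ is Galois over $\Q$ (complex conjugation preserves the order of $K$ of conductor $n$), so $G_{K[n]}$ is a normal subgroup of $G_\Q$ and hence $(\bar{\T}^\dag)^{G_{K[n]}}$ is a $G_\Q$-stable subspace of $\bar{\T}^\dag$. By the absolute irreducibility of $\bar{\T}^\dag$ (Lemma \ref{lem:twist-Hida-representation-irreducible}) this subspace is either $0$ or the whole space. In the latter case $\bar{\T}^\dag$, and in particular its determinant, would factor through $\Gal(K[n]/\Q)$; but by Proposition \ref{prop:characteristic-polynomial-Frobenius} the constant term of the characteristic polynomial of $\Fr_\ell$ gives $\det \bar{\T}^\dag = \bar{\e}_{\cyc}$, the mod-$p$ cyclotomic character, which is ramified at $p$ for $p > 2$. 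On the other hand $K[n]/\Q$ is unramified at $p$: by class field theory $K[n]/K$ is unramified at primes above $p$ since $p \nmid n$, and $K/\Q$ is unramified at $p$ by Assumption \ref{ass:field-K}. This contradiction finishes the argument.

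The main technical obstacle is the case $T = \T_m$: because $\calR_m$ is not Artinian one cannot iterate on powers of $\m_{\calR_m}$ directly, and the workaround is to temporarily forget the $\calR_m$-structure and exploit the $\OF$-freeness of $\T_m$, combining Nakayama over $\OF$ with the Artinian iteration in $\calR_m/\pi_F\calR_m$.
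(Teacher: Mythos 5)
Your proof is correct, and it diverges from the paper's at two places. First, for the residual vanishing: the paper shows directly that $\Q(\bar{\T}^\dag)\cap K[n]=\Q$ by observing that any prime ramifying in the intersection would have to divide both $Np$ and $nD_K$, which are coprime; this forces the image of $G_{K[n]}$ in $\Aut(\bar{\T}^\dag)$ to equal the image of $G_\Q$, and absolute irreducibility finishes. You instead use normality of $G_{K[n]}$ in $G_\Q$ to get a $G_\Q$-stable subspace, and rule out the ``full space'' case by comparing $\det\bar{\T}^\dag=\bar{\e}_{\cyc}$ (ramified at $p$) with the unramifiedness of $K[n]/\Q$ at $p$. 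Both routes are valid and of comparable length; the paper's is perhaps a touch more conceptual in that it directly produces the surjection $G_{K[n]}\twoheadrightarrow\Gal(\Q(\bar{\T}^\dag)/\Q)$, which is reusable information (the paper records it as Remark \ref{rk:irreducibility-over-ring-class-fields}), whereas your determinant argument is more self-contained and avoids needing to analyze the intersection field. Second, for the reduction step: the paper applies Lemma \ref{lem:nakayama-for-Galois-representation} uniformly to $\T^\dag$, $\T_m$, $\T_{m,s}$ (and then to $\T^\ac$, $\T_{t,m,s}$), taking the torsion-freeness hypothesis for granted; you instead route $\T_{m,s}$ and $\T_{t,m,s}$ through a finite filtration by powers of the maximal ideal with graded pieces a direct sum of copies of $\bar{\T}^\dag$, and route $\T_m$ through its $\OF$-module structure before doing the Artinian iteration modulo $\pi_F$. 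This is a more careful treatment: $\calR_m$, $\calR_{m,s}$ and $\calR_{t,m,s}$ need not be domains, so free modules over them are not $A$-torsion-free in the sense the lemma's proof uses, and your detour via $\OF$-freeness (where $\OF$ is a DVR, so the lemma really does apply) together with the graded-piece induction sidesteps that issue cleanly.
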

\begin{proof}
    Let $T$ be one of $\T^{\dag}$,  $\T_m$ or $\T_{m,s}$ and denote by $\bar{T}$ the residual $G_\Q$-representation of $T$, which is a free module of rank $2$ over a finite field, unramified outside $Np$. Let $F:=\Q(\bar{T})\cap K[n]$ and let $q$ be a prime that ramifies in $F$. Since the extension $\Q(\bar{T})/\Q$ is unramified outside $Np$, we must have that $q\mid Np$. From the fact that the extension $K[n]/\Q$ is unramified outside $nD_K$, we must have that $q\mid nD_K$. Since $Np$ is coprime with $n D_K$ (see Assumption \ref{ass:field-K}), no prime of $\Q$ ramifies in $F$. Therefore, $F=\Q$. Hence, by restricting automorphisms from $\bar{\Q}$ to $\Q(\bar{T})$ we obtain a surjection $G_{K[n]}\twoheadrightarrow\Gal(\Q(\bar{T})/\Q)$. Since, by Lemma \ref{lem:twist-Hida-representation-irreducible}, the representation $\bar{T}$ has no $\Gal(\Q(\bar{T})/\Q)$-invariants, we conclude that it does not have any nontrivial $G_{K[n]}$-invariant. By Lemma \ref{lem:nakayama-for-Galois-representation}, it follows that $H^0(K[n],T)=\{0\}$.

    By Lemma \ref{lem:residual-representation-Iwasawa-irreducible} there is an isomorphism of $G_K$-representations between $\bar{\T}^{\dag}$ and $\bar{\T}^{\ac}$, therefore $H^0(K[n],\bar{\T}^{\ac})=\{0\}$. By applying Lemma \ref{lem:nakayama-for-Galois-representation} again, we obtain that $\T^{\ac}$ and $\T_{t,m,s}$ have no $G_{K[n]}$-invariants.
\end{proof}

\begin{remark}\label{rk:irreducibility-over-ring-class-fields}
    In the first part of the proof of Lemma \ref{lem:no-invariants} we showed that, under our running assumptions, the residual representation $\bar{\T}^\dag=\bar{\T}^\ac$ is $G_{K[n]}$-irreducible, for every $n$ coprime with $NpD_K$.
\end{remark}

\subsection{Shapiro's lemma}\label{sec:shapiro-lemma}

Let $F$ be a perfect field and $F_\infty/F$ be a $\Z_p$-extension. For any $t\ge 0$, call $F_t$ the $t$-th layer of the extension. Let $T$ be a $\Z_p\llbracket G_F\rrbracket$-module. We allow $G_F$ to act on both factors of $T\otimes_{\Z_p}\Z_p[\Gal(F_t/F)]$ and $T \ \hat{\otimes}_{\Z_p}\Z_p\llbracket  \Gal(F_\infty/F)\rrbracket$ via the natural projections.

    \begin{lemma}\label{lem:Shapiro-application}
        Let $t\ge 0$. Shapiro's lemma induces isomorphisms
        \begin{itemize}
            \item[(i)] $\sh_t\colon H^1(F_t,T)\cong H^1\big(F, T\otimes_{\Z_p}\Z_p[\Gal(F_t/F)]\big)$;
            \item[(ii)] $\sh_\infty\colon \varprojlim_{t} H^1(F_t,T)\cong H^1\big(F, T \ \hat{\otimes}_{\Z_p}\Z_p\llbracket \Gal(F_\infty/F)\rrbracket\big)$, where the inverse limit is taken with respect to the corestriction maps.\label{condition:shapiro-anticyclotomic-p-extension}
        \end{itemize}
    \end{lemma}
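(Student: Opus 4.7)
\emph{Proof plan.} For part (i), the strategy is to reduce the statement to the classical Shapiro isomorphism by identifying $T\otimes_{\Z_p}\Z_p[\Gal(F_t/F)]$ with the induced module $\Ind_{G_{F_t}}^{G_F}T$ as $G_F$-modules. Concretely, after fixing a set of coset representatives $\{\sigma_i\}$ for $G_{F_t}\backslash G_F$, one sends $x\otimes \bar{\sigma}_i$ to the unique function in $\Ind_{G_{F_t}}^{G_F}T$ supported on $G_{F_t}\sigma_i^{-1}$ and taking the value $\sigma_i^{-1}x$ there; one checks that this map is $G_F$-equivariant and independent of the choice of representatives. Shapiro's lemma then yields the desired isomorphism $\sh_t\colon H^1(F_t,T)\cong H^1(F,T\otimes_{\Z_p}\Z_p[\Gal(F_t/F)])$.

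For part (ii), the plan is to pass to the inverse limit over $t$ of the isomorphisms $\sh_t$. First I would verify that, under the identifications of part (i), the corestriction map $\Cor^{F_t}_{F_{t'}}\colon H^1(F_t,T)\to H^1(F_{t'},T)$ for $t\ge t'$ corresponds to the map in cohomology induced by the natural $G_F$-equivariant surjection $T\otimes_{\Z_p}\Z_p[\Gal(F_t/F)]\twoheadrightarrow T\otimes_{\Z_p}\Z_p[\Gal(F_{t'}/F)]$ coming from the projection of Galois groups; this is standard and follows from the compatibility of Shapiro's lemma with induction and restriction. Since $T\ \hat{\otimes}_{\Z_p}\Z_p\llbracket\Gal(F_\infty/F)\rrbracket=\varprojlim_t T\otimes_{\Z_p}\Z_p[\Gal(F_t/F)]$ as topological $G_F$-modules, it remains to commute continuous $H^1$ with the inverse limit.

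For the commutation, I would apply the standard result that if $(M_t)$ is an inverse system of compact continuous $G_F$-modules with surjective transition maps, then $H^1_{\cont}(F,\varprojlim_t M_t)\cong \varprojlim_t H^1_{\cont}(F,M_t)$; the relevant Milnor-type short exact sequence has vanishing $R^1\varprojlim$ term because the Mittag--Leffler condition is satisfied (the transition maps on the modules, and hence on the $H^0$'s, are surjective). Applying this to $M_t=T\otimes_{\Z_p}\Z_p[\Gal(F_t/F)]$ gives the isomorphism $\sh_\infty$.

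The routine content is the unpacking of Shapiro's isomorphism; the only genuinely delicate step is the commutation of continuous cohomology with the inverse limit, which I expect to be the main obstacle. This is handled cleanly once one observes that the transition maps on the induced modules are surjective, so Mittag--Leffler applies and $R^1\varprojlim$ vanishes.
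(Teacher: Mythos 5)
Your proposal gives an explicit proof where the paper simply cites Colmez, \emph{Th\'eorie d'Iwasawa des repr\'esentations de de Rham}, Proposition~II.1.1 (and then records the compatibility diagram between $\sh_\alpha$, $\sh_t$, corestriction, and $\psi_{\alpha,t}$, which is exactly the content of your first paragraph of part (ii)). So the route is the same as Colmez's, just unfolded. Part (i) and the reduction of (ii) to a $\varprojlim$-commutation are fine.

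There is, however, a genuine error in your justification of the Mittag--Leffler condition. You assert that because the transition maps $M_t=T\otimes_{\Z_p}\Z_p[\Gal(F_t/F)]\twoheadrightarrow M_{t'}$ are surjective, the induced maps on $H^0$'s are surjective. This does not follow: taking $G_F$-invariants is only left exact. Indeed, under Shapiro's lemma in degree $0$ one has $H^0(F,M_t)\cong H^0(F_t,T)$, and the transition map $H^0(F,M_t)\to H^0(F,M_{t'})$ is identified with the corestriction (norm) map $H^0(F_t,T)\to H^0(F_{t'},T)$, which is certainly not surjective in general. What actually makes the argument go through is compactness, not surjectivity: since $T$ is a profinite $\Z_p$-module (in the paper's applications $T$ is $\T^\dag$ or its finite quotients) and $\Gal(F_t/F)$ is finite, each $M_t$, and hence each $H^0(F,M_t)$, is a compact Hausdorff abelian group, and for a countable inverse system of compact Hausdorff groups with continuous transition maps one always has $R^1\varprojlim=0$. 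Replacing the surjectivity claim by this compactness argument fixes the step and brings your proof in line with the content of Colmez's proposition.
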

    \begin{proof}
        See \cite[ Proposition II.1.1]{Colmez:theorie-dIwasawa}. 
    \end{proof}

      Let now $\ga>t\ge 1$ and call $\psi_{\ga,t}:\Z_p[\Gal(F_\ga,F)]\to \Z_p[\Gal(F_t,F)]$ the map induced by the natural projection between Galois groups. In the proof of \cite[Proposition II.1.1]{Colmez:theorie-dIwasawa}, point (ii) of the previous lemma is shown using the commutativity of
    \begin{equation}\label{eq:diagram-Shapiro-lemma}
        \begin{tikzcd}
	{H^1(F_\ga,T)} & {H^1\big(F, T\otimes_{\Z_p}\Z_p[\Gal(F_\ga/F)]\big)} \\
	{H^1(F_t,T)} & {H^1\big(F, T\otimes_{\Z_p}\Z_p[\Gal(F_t/F)]\big),}
	\arrow["\sh_\ga", from=1-1, to=1-2]
	\arrow["\sh_t", from=2-1, to=2-2]
	\arrow["\Cor"', from=1-1, to=2-1]
	\arrow["{\psi_{\ga,t}}", from=1-2, to=2-2]
\end{tikzcd}
    \end{equation}
    where the left vertical map is corestriction and the right one is the map induced by $\psi_{\ga,t}$.

    Coming back to our arithmetic setting, Lemma \ref{lem:Shapiro-application} yields isomorphisms
    \begin{equation}\label{eq:shapiro-isomorphisms}
        \sh_t\colon H^1(K_t,\T_{m,s})\cong H^1(K,\T_{t,m,s})\quad\text{and}\quad  \sh_\infty\colon \varprojlim_t H^1(K_t,\T^\dag)\cong H^1(K,\T^{\ac})
    \end{equation}
    for every $m,s\in\Z_{>0}$ and $t\ge 0$. Therefore, studying the cohomology of $\T^{\ac}$ over $K$ is equivalent to studying the cohomology of $\T^\dag$ over the anticyclotomic tower.

\section{Selmer structures and Kolyvagin systems}\label{sec:selmer-structures-and-kolyvagin-systems}

The aim of this section is to give the definition of \emph{modified universal Kolyvagin system} for the representation $\T^{\ac}$. This is the slight generalization of the classical notion of universal Kolyvagin system (see \cite{Mazur-Rubin:Kolyvagin-systems, Howard04:heegner-point-kolyvagin-system, Buyukboduk:deformations-of-kolyvagin-systems}) used and studied in \cite{mastella-zerman2025:anticyclotomic}.

\subsection{Local conditions and Selmer structures}

Let $L$ be a finite extension of $K$, let $R$ be a complete local Noetherian ring with finite residue field of characteristic $p$ and let $M$ be an $R\llbracket G_L\rrbracket$-module which is finitely generated and free as an $R$-module and unramified outside $Np$.

\begin{definition}
    A \textit{local condition} $\mathcal{F}$ on $M$ at a place $w$ of $L$
is the choice of an $R$-submodule $H_{\mathcal{F}}^1(L_w,M)\subseteq H^1(L_w,M)$. 
\end{definition}

\begin{remark}
    If $w$ is an archimedean prime of $L$, then $L_w=\C$ and so $H^1(L_w,M)=\{0\}$. Therefore, the choice of local conditions at archimedean primes is uniquely determined and will not play a role.
\end{remark}

 \begin{definition}\label{dfn:propagated-condition}
     Given an $R\llbracket G_L\rrbracket$-submodule (resp.~quotient) $N$ of $M$ and a local condition $\mathcal{F}$ on $M$, the \emph{propagated condition} on $N$ is the preimage (resp.~image) of $H_{\mathcal{F}}^1(L_w,M)$ under the natural map $H^1(L_w,N)\to H^1(L_w,M)$ (resp.~$ H^1(L_w,M)\to H^1(L_w,N)$).
 \end{definition}

 \begin{definition}
     A \emph{Selmer structure $\mathcal{F}$} on $M$ is a collection of local conditions on $M$ at every prime $w$ of $L$. The \emph{Selmer module} attached to $\mathcal{F}$ is
     \begin{equation*}
            H^1_{\mathcal{F}}(L,M):=\ker\bigg(H^1(L,M)\to\prod_w \frac{H^1(L_w,M)}{H^1_{\mathcal{F}}(L_w,M)}\bigg),
        \end{equation*}
        where $w$ runs over all places of $L$.
 \end{definition}

 We will be primarily concerned with local conditions of the following type.
 \begin{itemize}
     \item[(i)] The \emph{unramified} local condition on $M$ at $w$ is
     \begin{equation*}
         H^1_{\ur}(L_w,M):=\ker\big(H^1(L_w,M)\xrightarrow{\res} H^1(L_w^{\ur},M)\big),
     \end{equation*}
     where $L_w^{\ur}$ is the maximal unramified extension of $L_w$. 
     \item[(ii)] If  $L_w$ has residue characteristic different from $p$, the \emph{$F$-transverse} local condition on $M$ at $v$ is
     \begin{equation*}
         H^1_{F-\tr}(L_w,M):=\ker\big(H^1(L_w,M)\xrightarrow{\res} H^1(F,M)\big),
     \end{equation*}
     where $F$ is a maximal totally tamely ramified abelian $p$-extension of $L_w$.
\end{itemize}

Let $K(\ell)$ be the maximal $p$-subextension of $K[\ell]/K$. When $L=K$ and $w=\gl$ is a prime of $K$ that lies above an element of $\mathcal{P}_{m,s}$ for some $m,s\in\Z_{>0}$, local class field theory implies that $F=K(\ell)_{\lambda_\ell}$ is a maximal totally tamely ramified abelian $p$-extension of $K_\gl$, where $\gl_\ell$ is the unique prime of $K(\ell)$ above $\gl$ (see \cite[Lemma 4.1.21]{Zerman24:PhD-thesis}). For simplicity, we set $H^1_{\tr}(K_\gl,M):=H^1_{F-\tr}(K_\gl,M)$ and call it the \emph{transverse condition}.
    
    \begin{remark}
    \label{rk:transverse-condition}    
        If $M$ is unramified at $\gl$, using the fact that $M$ is a $\Z_p$-module and that $K(\ell)_{\lambda_\ell}$ is the maximal $p$-extension of $K_\gl$ contained in $K[\ell]_{\gl_\ell}$, one can also easily prove that
        \begin{equation*}
         H^1_{\tr}(K_\gl,M)=\ker\big(H^1(K_\gl,M)\xrightarrow{\res} H^1(K[\ell]_{\gl_\ell'},M)\big)
     \end{equation*}
     for every prime $\gl_\ell'$ of $K[\ell]$ above $\gl_\ell$. For more details, see \cite[Lemma 4.2.8]{Zerman24:PhD-thesis}.
    \end{remark}

    \begin{definition}
         When $L_w$ has residue characteristic different from $p$ and $M$ is unramified at $w$, we define the \emph{singular quotient} 
         \begin{equation*}
             H_{\s}^1(L_w,M):=\frac{H^1(L_w,M)}{H_{\ur}^1(L_w,M)}.
         \end{equation*}
    \end{definition}

\subsection{Greenberg conditions}

Let $v$ be a place of $K$ above $p$ and fix an absolute decomposition group $\mathfrak{D}_v$ for $v$. Since $\mathcal{R}^{\ac}$ is flat over $\mathcal{R}$, by tensoring the sequence in \eqref{eq:exact-sequence-at-p} with $\mathcal{R}^{\ac}$ we obtain an exact sequence of $\mathcal{R}^\ac\llbracket \mathfrak{D}_v\rrbracket$-modules
\begin{equation}\label{eq:exact-sequence-at-p-Iwasawa}
            0\longrightarrow F_v^+(\T^{\ac})\longrightarrow \T^{\ac}\longrightarrow F_v^-(\T^{\ac})\longrightarrow 0
\end{equation}
 where $F_v^+(\T^{\ac})$ and $F_v^-(\T^{\ac})$ are free $\mathcal{R}^{\ac}$-modules of rank $1$.

 \begin{definition}\label{dfn:strict-Greenberg-selmer-structure}
        Let $L$ be a finite extension of $K$. The \emph{(strict) Greenberg Selmer structure} $\mathcal{F}_{\Gr}$ on $\T^{\ac}$ over $L$ is defined by setting 
        \begin{equation*}
            H^1_{\mathcal{F}_{\Gr}}(L_w,\T^{\ac}):=\begin{cases}
                H_{\ur}^1(L_w,\T^{\ac})\quad &\text{if $w\nmid p$}\\
                \ker\big(H^1(L_w,\T^{\ac})\to H^1(L_w,F_v^-(\T^{\ac}))\big)\quad &\text{if $w\mid p$}
            \end{cases}
        \end{equation*}
        where $w$ runs over all places of $L$ and the unnamed map is induced by \eqref{eq:exact-sequence-at-p-Iwasawa}.
    \end{definition}

    In the same way, one defines the (strict) Greenberg Selmer structure for the representation $\T^{\dag}$, just using the exact sequence  \eqref{eq:exact-sequence-at-p} in place of \eqref{eq:exact-sequence-at-p-Iwasawa}. Moreover, the Selmer structure $\mathcal{F}_{\Gr}$ propagates (in the sense of Definition \ref{dfn:propagated-condition}) to the quotients $\T_m$, $\T_{m,s}$ and $\T_{t,m,s}$ for every $m,s\in\Z_{>0}$ and $t\ge 0$.

\subsection{Modified Kolyvagin systems}\label{sec:modified-Kolyvagin-systems}

Fix now $m,s\in\Z_{>0}$, $t\ge 0$, $\ell\in\mathcal{P}_{m,s}$ and call $\gl=(\ell)$ the prime of $K$ above $\ell$. There is a chain of functorial isomorphisms
\begin{equation}\label{eq:definition-finite-singular}
    \phi_\lambda^{\fs}\colon H^1_{\ur}(K_\gl,\T_{t,m,s})\overset{\ga_\gl}{\longrightarrow} \T_{t,m,s}\overset{\gb_\gl^{-1}}{\longrightarrow} H^1_{\s}(K_\gl,\T_{t,m,s})\otimes \mathcal{G}_\ell
\end{equation}
that come from \cite[Proposition 4.7]{Buyukboduk:big-Heegner-point-kolyvagin-system}, which is called the \emph{finite-singular isomorphism}.

\begin{remark}
        As noted in the proof of \cite[Proposition 1.1.7]{Howard04:heegner-point-kolyvagin-system}, the isomorphism $\ga_\gl$ is given by evaluating cocycles at the Frobenius automorphism $\Fr_\gl$, whereas $\gb_\gl$ is given by sending the class of $\xi\otimes\gs$ to $\xi(\tilde{\gs})$, for any lift $\tilde{\gs}\in\Gal(\bar{\Q}_\ell/K_\gl^{\ur})$ of $\gs$. For more details, see also \cite[Lemma 2.8]{mastella-zerman2025:anticyclotomic}.
\end{remark}

For every $\chi\in \Aut_{\calR_{t,m,s}}(\T_{t,m,s})$, we define
\begin{equation*}
    \phi_\lambda^{\fs}(\chi)\colon H^1_{\ur}(K_\gl,\T_{t,m,s})\overset{\ga_\gl}{\longrightarrow} \T_{t,m,s}\overset{\chi}{\longrightarrow} \T_{t,m,s}\overset{\gb_\gl^{-1}}{\longrightarrow} H^1_{\s}(K_\gl,\T_{t,m,s})\otimes \mathcal{G}_\ell.
\end{equation*}
By \cite[Lemma 4.10]{Buyukboduk:big-Heegner-point-kolyvagin-system} (see also \cite[Lemma 1.2.4]{Mazur-Rubin:Kolyvagin-systems}), there is a functorial splitting 
\begin{equation*}
            H^1(K_\gl,\T_{t,m,s})=H_{\ur}^1(K_\gl,\T_{t,m,s})\oplus H_{\tr}^1(K_\gl,\T_{t,m,s}),
\end{equation*}
so that $H_{\tr}^1(K_\gl,\T_{t,m,s})$ projects isomorphically onto $H_{\s}^1(K_\gl,\T_{t,m,s})$. We now use the transverse condition to modify existing Selmer structures.

\begin{definition}
        Let $\mathcal{F}$ be a Selmer structure on $\T_{t,m,s}$ over $K$. For every $n\in\mathcal{N}_{m,s}$, the \emph{modified Selmer structure} $\mathcal{F}(n)$ on $\T_{t,m,s}$ is given by
        \begin{equation*}
            H^1_{\mathcal{F}(n)}(K_v,\T_{t,m,s}):=\begin{cases}
                H^1_{\mathcal{F}}(K_v,\T_{t,m,s}) &\text{if $v\nmid n$}\\
                H^1_{\tr}(K_v,\T_{t,m,s}) &\text{if $v\mid n$}
            \end{cases}
        \end{equation*}
        for every place $v$ of $K$.
\end{definition}

\begin{definition}
    For every $n\in\mathcal{N}$ set $\mathcal{G}(n)=\bigotimes_{\ell\mid n}\mathcal{G}_\ell$, where the tensor product runs over all primes dividing $n$.
\end{definition}
 
Let now $\mathcal{F}$ be a Selmer structure on $\T_{t,m,s}$ over $K$ with the property that $H^1_{\mathcal{F}}(K_v,\T_{t,m,s})=H^1_{\ur}(K_v,\T_{t,m,s})$ for every $v\nmid Np$. Following \cite[§3.1]{Buyukboduk:deformations-of-kolyvagin-systems}, for every $n\in\mathcal{N}_{m,s}$ and $\ell\in\mathcal{P}_{m,s}$ coprime with $n$, we define the maps:
 \begin{itemize}
            \item $\psi_{n\ell}^\ell\colon H^1_{\mathcal{F}(n\ell)}(K,\T_{t,m,s})\otimes\mathcal{G}(n\ell)\to H_{\s}^1(K_{\gl},\T_{t,m,s})\otimes\mathcal{G}(n\ell)$ to be localization at $\gl=(\ell)$ followed by the projection to the singular quotient;
            \item $\psi_n^\ell(\chi)\colon H^1_{\mathcal{F}(n)}(K,\T_{t,m,s})\otimes\mathcal{G}(n)\to H_{\s}^1(K_{\gl},\T_{t,m,s})\otimes\mathcal{G}(n\ell)$ to be localization at $\gl=(\ell)$ followed by the twisted finite-singular isomorphism $\phi_\gl^{\fs}(\chi)$.
\end{itemize}

Let now $\mathcal{P}_{m,s}'$ be an infinite subset of $\mathcal{P}_{m,s}$ and define $\mathcal{N}_{m,s}'$ to be the set of all square-free products of primes that lie below elements of $\mathcal{P}_{m,s}'$. Moreover, for every couple $(n,\ell)$ with $\ell$ prime and $n\ell\in\mathcal{N}_{m,s}'$, let $\chi_{n,\ell}$ be an automorphism of $\T_{t,m,s}$.

\begin{definition}\label{dfn:kolyvagin-system}
        A \emph{Kolyvagin system} for the quadruple $(\T_{t,m,s},\mathcal{F},\mathcal{P}_{m,s}', \{\chi_{n,\ell}\})$ is a collection of cohomology classes $\{\boldsymbol{\gk}^{\ac}(n)_{t,m,s}\}_{n\in\mathcal{N}_{m,s}'}$ such that, for every $n\ell\in\mathcal{N}_{m,s}'$ with $\ell$ prime, we have:
    \begin{itemize}
        \item[(K1)] $\boldsymbol{\gk}^{\ac}(n)_{t,m,s}\in H^1_{\mathcal{F}(n)}(K,\T_{t,m,s})\otimes\mathcal{G}(n)$;
        \item[(K2)] $\psi_n^\ell(\chi_{n,\ell})(\boldsymbol{\gk}^{\ac}(n)_{t,m,s})=\psi_{n\ell}^\ell(\boldsymbol{\gk}^{\ac}(n\ell)_{t,m,s})$.
    \end{itemize}
    We denote by $\KS(\T_{t,m,s},\mathcal{F},\mathcal{P}_{m,s}',\{\chi_{n,\ell}\})$ the $\calR_{t,m,s}$-module of all Kolyvagin systems for the quadruple $(\T_{t,m,s},\mathcal{F},\mathcal{P}_{m,s}',\{\chi_{n,\ell}\})$.
\end{definition}

Let now $\mathcal{P}'\subseteq \mathcal{P}_{1,1}$ such that $\mathcal{P}_{m,s}':=\mathcal{P}'\cap\mathcal{P}_{m,s}$ is an infinite subset of $\mathcal{P}_{m,s}$ for any $m,s\in\Z_{>0}$, and denote by $\mathcal{N}'$ the set of square-free products of elements of $\mathcal{P}'$. For every couple $(n,\ell)$ with $\ell$ prime and $n\ell\in\mathcal{N}'$, let $\chi_{n,\ell}\in\Aut_{\calR^{\ac}}(\T^{\ac})$ and denote with the same letter the induced automorphism of $\T_{t,m,s}$, for every $t\ge 0$ and $m,s\in\Z_{>0}$. Let $\mathcal{F}$ be a Selmer structure on $\T^\ac$ over $K$ with the property that $H^1_{\mathcal{F}}(K_v,\T^\ac)=H^1_{\ur}(K_v,\T^\ac)$ for every $v\nmid Np$. 

\begin{definition}
    The $\mathcal{R}^{\ac}$-module of \emph{universal Kolyvagin systems} for $(\T^{\ac},\mathcal{F},\mathcal{P}', \{\chi_{n,\ell}\})$ is
    \begin{equation*}
            \overline{\KS}(\T^{\ac},\mathcal{F},\mathcal{P}', \{\chi_{n,\ell}\}):=\varprojlim_{t,m,s}\varinjlim_{m',s'} \KS(\T_{t,m,s},\mathcal{F},\mathcal{P}_{m',s'}', \{\chi_{n,\ell}\}),
        \end{equation*}
    where the direct limit is taken with respect to all $m',s'$ with $m'\ge m$ and $s'\ge s$.
\end{definition}

\begin{remark}
    The attentive reader will notice that in the above definition we are using the fact that the propagation of $\mathcal{F}$ to $\T_{t,m,s}$ coincides with the unramified local condition at any prime $v\nmid Np$. For an explanation of this, see the first part of the proof of Lemma \ref{lem:easier-description-Greenberg-condition}.
\end{remark}

Assume now that $\boldsymbol{\gk}^{\ac}\in \overline{\KS}(\T^{\ac},\mathcal{F}_{\Gr},\mathcal{P}',\{\chi_{n,\ell}\})$ is induced by the set of modified Kolyvagin systems $\boldsymbol{\gk}^{\ac}(n)_{t,m,s}\in \KS(\T_{t,m,s},\mathcal{F}_{\Gr},\mathcal{P}_{m,s}',\{\chi_{n,\ell}\})$. Since $1\in\mathcal{N}_{m,s}'$ for every $m,s\in\Z_{>0}$, one can define
\begin{equation*}
    \boldsymbol{\gk}^{\ac}(1):=\varprojlim_{t,m,s} \boldsymbol{\gk}^{\ac}(1)_{t,m,s}\in \varprojlim_{t,m,s} H^1(K,T_{t,m,s})\cong H^1(K,\T^{\ac})\cong\varprojlim_t H^1(K_t,\T^{\dag}),
\end{equation*}
where the last isomorphism comes from \eqref{eq:shapiro-isomorphisms}.

\begin{remark}
    The aim of the next chapter will be to build a universal modified Kolyvagin system out of the big Heegner classes $\kappa_c$. In Lemma \ref{lem:key-formula}, we explicitly build the (inverse of) the twisting automorphisms $\chi_{n,\ell}$ that make the machinery work. We don't expect to be able to use these automorphisms to modify the definition of the derived classes in order to obtain a classical universal Kolyvagin system, as it is sometimes suggested in literature (see e.g. \cite[proof of Theorem 1.7.5]{Howard04:heegner-point-kolyvagin-system}). Indeed, one of the main obstacles is that the twisting automorphisms may not be Galois equivariant, and so they may not induce an automorphism of $H^1(K,\T_{t,m,s})$.
    
    For a broader discussion on \emph{modified} universal Kolyvagin systems, see \cite[§2.7]{mastella-zerman2025:anticyclotomic}. In any case, the slogan that one has to keep in mind is that, when it comes to bound the rank of Selmer groups, a modified universal Kolyvagin system has the same usage of a classical (universal) Kolyvagin system. We will make this clear and precise in Section \ref{sec:Anticyclotomic-Iwasawa-theory}.
\end{remark}

\section{The big Heegner point Kolyvagin system}\label{sec:the-big-heegner-point-kolyvagin-system}

In this chapter we show how to choose a suitable subset $\mathcal{P}'$ of $\mathcal{P}_{1,1}$ and build a modified universal Kolyvagin system for the triple $(\T^{\ac},\mathcal{F}_{\Gr},\mathcal{P}')$, starting from the set of Longo--Vigni's big Heegner classes recalled in Section \ref{sec:big-heegner-classes}. Before starting the actual construction, we pursue a deeper study of the ramification of  $\T^{\ac}$ at primes dividing $Np$.

\subsection{Controlling ramification}\label{sec:controlling-ramification}

Let $v$ be a prime of $K$ dividing $N$ and let $\mathfrak{I}_v$ be a fixed inertia group at $v$. For the rest of this article, we assume the following condition.

\begin{assumption}\label{ass:tamagawa-factors-at-N}
    For every $v\mid N$, the $\mathcal{R}$-module $H^1(\mathfrak{I}_v,\T^{\dag})$ is free.
\end{assumption}

\begin{remark}
    This condition has been widely studied in \cite[§4.3]{Buyukboduk-Sakamoto25:on-the-artin-formalism}. A simpler version of the arguments that go into the proof of \cite[Proposition 4.30]{Buyukboduk-Sakamoto25:on-the-artin-formalism} shows that, if the cardinality of the image of $\mathfrak{I}_v$ in $\Aut_{\mathcal{R}}(\T^\dag)$ is either $0$ or $\infty$ or it is not divisible by $p$, then $H^1(\mathfrak{I}_v,\T^{\dag})$ is free if the $p$-part of the Tamagawa factor for a single member of the Hida family passing through $f$ equals $1$.
\end{remark}

Since $\mathfrak{I}_v$ acts trivially on $\gL^{\ac}$ and the functor $-\ \hat{\otimes}_{{\Z}_p}\gL^{\ac}$ is exact, we have that $H^1(\mathfrak{I}_v,\T^{\ac})\cong H^1(\mathfrak{I}_v,\T^{\dag})\hat{\otimes}_{\Z_p}\gL^{\ac}$, therefore $H^1(\mathfrak{I}_v,\T^{\ac})$ is a free $\mathcal{R}^{\ac}$-module. Here we present an important consequence of Assumption \ref{ass:tamagawa-factors-at-N}.

\begin{proposition}\label{prop:tamagawa-numbers-at-N}
    A ring homomorphism $\mathfrak{s}\colon \mathcal{R}\to S$ induces an isomorphism $(\T^\dag)^{\mathfrak{I}_v}\otimes_{\mathcal{R}}S\cong (\T^{\dag}\otimes_{\mathcal{R}} S)^{\mathfrak{I}_v}$ and the module $H^1(\mathfrak{I}_v,\T^{\dag}\otimes_{\mathcal{R}}S)$ is $S$-torsion-free. The same holds replacing $\mathcal{R}$ with $\mathcal{R}^{\ac}$ and $\T^{\dag}$ with $\T^{\ac}$.
\end{proposition}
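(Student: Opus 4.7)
The guiding idea is to exploit Assumption \ref{ass:tamagawa-factors-at-N} to deduce that $(\T^\dag)^{\mathfrak{I}_v}$ is in fact $\calR$-flat, and then to propagate everything under $-\otimes_\calR S$ via an explicit four-term exact sequence computing the inertia cohomology. For the anticyclotomic assertion, one uses that $H^1(\mathfrak{I}_v,\T^\ac)\cong H^1(\mathfrak{I}_v,\T^\dag)\hat{\otimes}_{\Z_p}\gL^\ac$ (as already recorded right after Assumption \ref{ass:tamagawa-factors-at-N}) is $\calR^\ac$-free, and then runs through the same argument with $(\T^\ac,\calR^\ac)$ in place of $(\T^\dag,\calR)$.

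First I would reduce the computation of $H^\bullet(\mathfrak{I}_v,\T^\dag)$ to a procyclic pro-$p$ situation. Since $v\mid N$ and $p\nmid N$, the residue characteristic $\ell$ of $v$ satisfies $\ell\ne p$. The image of the wild inertia $\mathfrak{I}_v^{\wi}$ in $\GL_2(\calR)$ is a finite group of order prime to $p$ -- it meets the pro-$p$ principal congruence kernel of $\GL_2(\calR)\twoheadrightarrow\GL_2(k)$ trivially -- and the same holds for the prime-to-$p$ part of the tame quotient $\mathfrak{I}_v^{\ta}\cong\prod_{\ell'\ne\ell}\Z_{\ell'}$. Maschke-type averaging implies that these pieces contribute trivially to the higher cohomology of a pro-$p$ module and cut out free $\calR$-direct summands as invariants. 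Inflation-restriction thus reduces the problem to the action of a topological generator $\gamma$ of the pro-$p$ tame part $\Z_p$ on a free $\calR$-submodule $N\subseteq\T^\dag$, producing the four-term exact sequence
\[
0\longrightarrow (\T^\dag)^{\mathfrak{I}_v}\longrightarrow N\xrightarrow{\ \gamma-1\ } N \longrightarrow H^1(\mathfrak{I}_v,\T^\dag)\longrightarrow 0.
\]

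Next I would split this into the two short exact sequences $0\to M\to N\to H^1(\mathfrak{I}_v,\T^\dag)\to 0$ and $0\to(\T^\dag)^{\mathfrak{I}_v}\to N\to M\to 0$, where $M=\Imm(\gamma-1)$. Since both $N$ and $H^1(\mathfrak{I}_v,\T^\dag)$ are $\calR$-flat (the latter by Assumption \ref{ass:tamagawa-factors-at-N}), the two-out-of-three property for flatness in a short exact sequence -- extracted from the Tor long exact sequence -- gives flatness of $M$ and then of $(\T^\dag)^{\mathfrak{I}_v}$. Tensoring both short exact sequences with $S$ therefore preserves exactness, and recombining them yields the analogous four-term exact sequence over $S$. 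Identifying its kernel and cokernel produces the desired base-change isomorphism $(\T^\dag)^{\mathfrak{I}_v}\otimes_\calR S\cong(\T^\dag\otimes_\calR S)^{\mathfrak{I}_v}$ together with $H^1(\mathfrak{I}_v,\T^\dag\otimes_\calR S)\cong H^1(\mathfrak{I}_v,\T^\dag)\otimes_\calR S$; the latter is $S$-free (and thus $S$-torsion-free) because $H^1(\mathfrak{I}_v,\T^\dag)$ is $\calR$-free.

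The main obstacle is the procyclicity reduction of the first step: one must carefully justify that the wild and prime-to-$p$-tame pieces of $\mathfrak{I}_v$ really contribute trivially to the cohomology of the pro-$p$ $\calR$-module $\T^\dag$ and do not spoil the freeness of the relevant invariants. Once this is secured, the rest of the argument is a routine Tor and base-change manipulation, and the anticyclotomic statement follows by copying the proof verbatim with $(\T^\ac,\calR^\ac)$ replacing $(\T^\dag,\calR)$.
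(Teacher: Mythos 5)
Your argument is correct, but it takes a genuinely different route from the paper's. The paper's proof is a two-step reduction: the base-change isomorphism $(\T^\dag)^{\mathfrak{I}_v}\otimes_\calR S\cong(\T^\dag\otimes_\calR S)^{\mathfrak{I}_v}$ is quoted directly from \cite[Proposition 4.17]{Buyukboduk-Sakamoto25:on-the-artin-formalism} as a consequence of Assumption \ref{ass:tamagawa-factors-at-N}, and torsion-freeness of $H^1(\mathfrak{I}_v,T_S)$ is then extracted by applying that isomorphism to both $S$ and $S/xS$ to see that $T_S^{\mathfrak{I}_v}\to(T_S/xT_S)^{\mathfrak{I}_v}$ is surjective, whence multiplication by $x$ is injective on $H^1(\mathfrak{I}_v,T_S)$ by the long exact sequence of $0\to T_S\xrightarrow{\ x\ }T_S\to T_S/xT_S\to 0$. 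You instead reprove the base-change statement from scratch via the structural analysis of $\mathfrak{I}_v$ (killing the wild and prime-to-$p$ tame parts, reducing to a procyclic pro-$p$ generator, and pushing the four-term sequence through a Tor computation); this is essentially the content of the external proposition the paper cites, so your proof is more self-contained, and it yields the slightly stronger conclusion that $H^1(\mathfrak{I}_v,T_S)\cong H^1(\mathfrak{I}_v,\T^\dag)\otimes_\calR S$ is $S$-free rather than merely torsion-free. The price is twofold: the reduction step you yourself flag requires the routine but nontrivial verifications (vanishing of higher continuous cohomology of pro-prime-to-$p$ groups on pro-$p$ modules, passage to the limit over $\T^\dag/\m_\calR^n\T^\dag$, and compatibility of the averaging idempotents with $-\otimes_\calR S$); and identifying the kernel and cokernel of $\gamma-1$ on $N\otimes_\calR S$ with $T_S^{\mathfrak{I}_v}$ and $H^1(\mathfrak{I}_v,T_S)$ requires rerunning the same reduction for $T_S$, which is legitimate precisely because in every application $S$ is again a complete local Noetherian ring with finite residue field of characteristic $p$, so that $T_S$ is still a pro-$p$ profinite module.
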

\begin{proof}
    Call $T_S=\T^{\dag}\otimes_{\mathcal{R}}S$. By \cite[Proposition 4.17]{Buyukboduk-Sakamoto25:on-the-artin-formalism} and Assumption \ref{ass:tamagawa-factors-at-N}, we obtain that $(\T^{\dag})^{\mathfrak{I}_v}\otimes_{\mathcal{R}}S\cong T_S^{\mathfrak{I}_v}$. Then, for every $x\in S\setminus\{0\}$, the map $T_S^{\mathfrak{I}_v}\to (T_S/xT_S)^{\mathfrak{I}_v}$ is surjective. Taking the long exact sequence in cohomology of the multiplication by $x$, we obtain that the multiplication by $x$ in $H^1(\mathfrak{I}_v,T_S)$ is injective. The same argument applies replacing $\mathcal{R}$ with $\mathcal{R}^{\ac}$ and $\T^{\dag}$ with $\T^{\ac}$.
\end{proof}

We now study the behaviour at $p$. Inspired by \cite[Hypothesis H.stz]{Buyukboduk:big-Heegner-point-kolyvagin-system}, for the rest of this article we work under the following assumption.

\begin{assumption}\label{ass:H.stz}
    For every prime $v\mid p$ of $K$ we assume that
    \begin{equation*}
        H^0(K_v, F_v^-(\bar{\T}^{\dag}))=\{0\},
    \end{equation*}
    where $F_v^-(\bar{\T}^{\dag}):=F_v^-(\T^{\dag})\otimes_{\mathcal{R}}\mathcal{R}/\mathfrak{m}_\mathcal{R}$.
\end{assumption}

 By Lemma \ref{lem:residual-representation-Iwasawa-irreducible}, the residual $G_K$-representation $\bar{\T}^\dag$ coincides with $\bar{\T}^{\ac}$. Therefore, Assumption \ref{ass:H.stz} is equivalent to assume that $H^0(K_v, F_v^-(\bar{\T}^{\ac}))=\{0\}$.

\begin{remark}
    Notice that $F_v^-(\bar{\T}^{\ac})\cong F_v^-(\bar{\T}^{\dag})$ is a vector space of dimension $1$ over a finite field. Moreover, the action of $G_{K_v}$ on it factors through the product of characters $\eta_v\Theta^{-1}$, where $\eta_v\colon G_{K_v}\to\mathcal{R}^{\times}$ is the unramified character defined by sending $\Fr_v$ to $U_p$ (see \cite[p. 300]{Longo-Vigni11:quaternion-algebras-Hida-families}). Therefore, Assumption \ref{ass:H.stz} is equivalent to requiring that the character $\eta_v\Theta^{-1}$ is not identically congruent to $1$ modulo $\m_{\mathcal{R}}$. 
\end{remark}

An important consequence of Assumptions \ref{ass:tamagawa-factors-at-N} and \ref{ass:H.stz} is an explicit description of the Greenberg local conditions on the quotients of $\T^{\ac}$. Most of the arguments appearing in the proof of the following result are present in \cite{Buyukboduk:big-Heegner-point-kolyvagin-system}, see in particular \cite[proof of Corollary 4.23]{Buyukboduk:big-Heegner-point-kolyvagin-system} and \cite[proof of Proposition 4.26]{Buyukboduk:big-Heegner-point-kolyvagin-system}.

\begin{lemma}\label{lem:easier-description-Greenberg-condition}
    Let $m,s\in\Z_{>0}$, $t\ge 0$ and $v$ be a place of $K$. Then
    \begin{equation*}
         H^1_{\mathcal{F}_{\Gr}}(K_v,\T_{t,m,s})=\begin{cases}
                H_{\ur}^1(K_v,\T_{t,m,s})\quad &\text{if $v\nmid p$}\\
                \ker\big(H^1(K_v,\T_{t,m,s})\to H^1(K_v,F_v^-(\T_{t,m,s}))\big)\quad &\text{if $v\mid p$},
            \end{cases}
    \end{equation*}
    where $F_v^-(\T_{t,m,s}):=F_v^-(\T^{\ac})\otimes_\mathcal{R^{\ac}}\calR_{t,m,s}$.
\end{lemma}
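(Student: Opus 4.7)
In both cases the inclusion of the propagated condition into the condition stated in the lemma is immediate from the commutative diagram defining propagation (Definition~\ref{dfn:propagated-condition}), so the plan is to show the reverse inclusion, i.e., that every class in the claimed local condition lifts to one in $H^1_{\mathcal{F}_{\Gr}}(K_v,\T^{\ac})$ under the map induced by the quotient $\T^{\ac}\twoheadrightarrow \T_{t,m,s}$. For primes $v\nmid p$ the problem becomes the surjectivity of $H^1_{\ur}(K_v,\T^{\ac})\to H^1_{\ur}(K_v,\T_{t,m,s})$. If $v\nmid Np$ both modules are unramified, so unramified cohomology equals the $\Fr_v$-coinvariants and the surjectivity is inherited from that of $\T^{\ac}\twoheadrightarrow\T_{t,m,s}$. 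If $v\mid N$ I would invoke Proposition~\ref{prop:tamagawa-numbers-at-N}, which yields the isomorphism $(\T^{\ac})^{\mathfrak{I}_v}\otimes_{\calR^{\ac}}\calR_{t,m,s}\cong(\T_{t,m,s})^{\mathfrak{I}_v}$; the induced map $(\T^{\ac})^{\mathfrak{I}_v}\twoheadrightarrow(\T_{t,m,s})^{\mathfrak{I}_v}$ is therefore surjective, and passing to $\Fr_v$-coinvariants (a right-exact operation) gives the required surjectivity on $H^1_{\ur}=H^1(\mathfrak{D}_v/\mathfrak{I}_v,(\bullet)^{\mathfrak{I}_v})$.

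For $v\mid p$ the argument has two steps. First, I would apply Lemma~\ref{lem:nakayama-for-Galois-representation} to the rank-one free modules $F_v^-(\T^{\ac})$ and $F_v^-(\T_{t,m,s})$, using Assumption~\ref{ass:H.stz} as input on the residual side, to deduce $H^0(K_v,F_v^-(\T^{\ac}))=0=H^0(K_v,F_v^-(\T_{t,m,s}))$. From the long exact sequences attached to $0\to F_v^+\to \bullet\to F_v^-\to 0$ (applied to $\T^{\ac}$ and to $\T_{t,m,s}$), these vanishings produce injections $H^1(K_v,F_v^+(\bullet))\hookrightarrow H^1(K_v,\bullet)$ whose images coincide with the kernels of the maps to $H^1(K_v,F_v^-(\bullet))$. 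Consequently $H^1_{\mathcal{F}_{\Gr}}(K_v,\T^{\ac})$ and the candidate local condition on $\T_{t,m,s}$ get identified with $H^1(K_v,F_v^+(\T^{\ac}))$ and $H^1(K_v,F_v^+(\T_{t,m,s}))$ respectively, so everything reduces to proving surjectivity of the natural map $H^1(K_v,F_v^+(\T^{\ac}))\to H^1(K_v,F_v^+(\T_{t,m,s}))$.

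This last surjectivity is, in my view, the main technical obstacle. The plan is to proceed by d\'{e}vissage along the three generators $p^s,\omega_{2,m},\gamma_{\ac}^{p^t}-1$ of the defining ideal $I$ of $\calR_{t,m,s}$ inside $\calR^{\ac}$: they form a regular sequence on the free $\calR^{\ac}$-module $F_v^+(\T^{\ac})$, since $\calR^{\ac}\cong\calR\llbracket X\rrbracket$ is Cohen--Macaulay (Lemma~\ref{lem:algebraic-properties-of-R}) and the three elements constitute a system of parameters. At each step one has a short exact sequence $0\to A\xrightarrow{x}A\to A/xA\to 0$ with $x\in\{p^s,\omega_{2,m},\gamma_{\ac}^{p^t}-1\}$ acting injectively on $A$; the associated long exact sequence shows both that $H^1(K_v,A)\to H^1(K_v,A/xA)$ is surjective as soon as $x$ is a non-zero-divisor on $H^2(K_v,A)$, and that $H^2(K_v,A/xA)=H^2(K_v,A)/xH^2(K_v,A)$, so that the vanishing of $H^2$ propagates through the d\'{e}vissage. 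It therefore suffices to prove $H^2(K_v,F_v^+(\T^{\ac}))=0$, which I would deduce from the Iwasawa-theoretic form of local Tate duality (cf.~\cite{Nekovar06:selmer-complexes}, \S5.2) combined with the self-dual pairing \eqref{eq:pairing}: the latter identifies $\Hom_{\calR^{\ac}}(F_v^+(\T^{\ac}),\calR^{\ac}(1))\cong F_v^-(\T^{\ac})$ as $G_{K_v}$-modules, so duality pairs $H^2(K_v,F_v^+(\T^{\ac}))$ with $H^0(K_v,F_v^-(\T^{\ac}))=0$, yielding the desired vanishing. Formulating this duality statement correctly for finitely generated $\calR^{\ac}$-modules is the most delicate point of the argument.
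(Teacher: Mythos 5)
Your proposal is correct and follows essentially the same route as the paper's proof: inflation--restriction and surjectivity of invariants for $v\nmid p$ (with Proposition~\ref{prop:tamagawa-numbers-at-N} handling $v\mid N$), and for $v\mid p$ the reduction to $H^1(K_v,F_v^+(\T^{\ac}))\twoheadrightarrow H^1(K_v,F_v^+(\T_{t,m,s}))$ via the vanishing of $H^0(K_v,F_v^-(\cdot))$ (Assumption~\ref{ass:H.stz} plus Nakayama), then dévissage along $\omega_{2,m},p^s,\gamma_{\ac}^{p^t}-1$ using $H^2(K_v,F_v^+(\T^{\ac}))=0$ from the self-dual pairing and local Tate duality. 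The only cosmetic difference is that you establish the $H^2$-vanishing once at the top and propagate it through the dévissage, whereas the paper invokes Nakayama and duality at each intermediate quotient; both are sound.
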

\begin{proof}
    Let $v\nmid Np$. The commutative diagram with exact rows
        \begin{equation*}
            \begin{tikzcd}
	{H^1_{\mathcal{F}_{\Gr}}(K_v,\T^{\ac})} & {H^1(K_v,\T^{\ac})} & {H^1(K_v^{\ur},\T^{\ac})} \\
	{H^1_{\ur}(K_v,\T_{t,m,s})} & {H^1(K_v,\T_{t,m,s})} & {H^1(K_v^{\ur},\T_{t,m,s})}
	\arrow[from=1-1, to=1-2]
	\arrow["\Res", from=1-2, to=1-3]
	\arrow["\Res", from=2-2, to=2-3]
	\arrow[from=2-1, to=2-2]
	\arrow[from=1-2, to=2-2]
	\arrow[from=1-3, to=2-3]
            \end{tikzcd}
        \end{equation*}
        induces a map $\phi\colon H^1_{\mathcal{F}_{\Gr}}(K_v,\T^{\ac})\longrightarrow H^1_{\ur}(K_v,\T_{t,m,s})$.
        To obtain the desired equality, it suffices to show that $\phi$ is surjective. By inflation--restriction, the map $\phi$ corresponds to the map
        \begin{equation*}
            H^1(K_v^{\ur}/K_v,(\T^{\ac})^{\mathfrak{I}_v})\longrightarrow H^1(K_v^{\ur}/K_v,\T_{t,m,s}^{\mathfrak{I}_v})
        \end{equation*}
        induced by the morphism $(\T^{\ac})^{\mathfrak{I}_v}\to \T_{t,m,s}^{\mathfrak{I}_v}$, where $\mathfrak{I}_v$ is a fixed inertia at $v$. Since $v\nmid Np$, the inertia $\mathfrak{I}_v$ acts trivially on $\T^{\ac}$ and $\T_{t,m,s}$ (see Lemma \ref{lem:residual-representation-Iwasawa-irreducible}). The claim follows by applying the long exact sequence in cohomology to the surjective map $\T^{\ac}\twoheadrightarrow \T_{t,m,s}$ and noticing that $\Gal(K_v^{\ur}/K_v)\cong\hat{\Z}$ has cohomological dimension 1. 

        If $v\mid N$, the proof goes on exactly as in the previous case, where the surjectivity of $(\T^{\ac})^{\mathfrak{I}_v}\to \T_{t,m,s}^{\mathfrak{I}_v}$ is given by the isomorphism $(\T^{\ac})^{\mathfrak{I}_v}\otimes_{\mathcal{R}^{\ac}}\calR_{t,m,s}\cong \T_{t,m,s}^{\mathfrak{I}_v}$ of Proposition \ref{prop:tamagawa-numbers-at-N}.
        
        Let now $v\mid p$.  Let's use the letter $T$ to denote any of $\T^{\ac}$, $\T^{\ac}/(\omega_{2,m})$, $\T^{\ac}/(\omega_{2,m},p^s)$ or $\T^{\ac}/(\omega_{2,m}, p^s, \gamma_{\ac}^{p^t}-1)=\T_{t,m,s}$. Assumption \ref{ass:H.stz} together with Nakayama's lemma (see Lemma \ref{lem:nakayama-for-Galois-representation}) yields $H^0(K_v, F_v^-(T))=\{0\}$. The duality between $F_v^-(T)$ and $F_v^+(T)$ coming from the perfect alternating pairing  of \eqref{eq:pairing} together with Tate local duality implies that
    \begin{equation}
    \label{equ:H-stz-modified}
        H^2(K_v, F_v^+(T))=\{0\},
    \end{equation}
    as noted also in \cite[p. 809]{Buyukboduk:big-Heegner-point-kolyvagin-system}. Since $F_v^+(\T^{\ac})$ is free over $\mathcal{R}^{\ac}$, multiplication by $\omega_{2,s}$ yields the exact sequence 
    \begin{equation*}
        \begin{tikzcd}
	0 & {F_v^+(\T^{\ac})} & {F_v^+(\T^{\ac})} & {F_v^+(\T^{\ac}/(\omega_{2,m}))} & 0.
	\arrow[from=1-1, to=1-2]
	\arrow["{\omega_{2,s}}", from=1-2, to=1-3]
	\arrow[from=1-3, to=1-4]
	\arrow[from=1-4, to=1-5]
        \end{tikzcd}
    \end{equation*}
    Taking the long exact sequence in cohomology, equation (\ref{equ:H-stz-modified}) gives a surjection
    \begin{equation*}
        H^1(K_v,F_v^+(\T^{\ac}))\twoheadrightarrow H^1\big(K_v,F_v^+(\T^{\ac}/(\omega_{2,m}))\big).
    \end{equation*}
    Repeating the same argument to the exact sequences attached to the multiplication by $p^s$ and $\gamma_{\ac}^{p^t}-1$, we eventually obtain a surjection $H^1(K_v,F_v^+(\T^{\ac}))\twoheadrightarrow H^1(K_v,F_v^+(\T_{t,m,s}))$. Then, the commutative diagram with exact rows 
    \begin{equation*}
       \begin{tikzcd}
	{H^1(K_v,F_v^+(\T^{\ac}))} & {H^1(K_v,\T^{\ac})} & {H^1(K_v,F_v^-(\T^{\ac}))} \\
	{H^1(K_v,F_v^+(\T_{t,m,s}))} & {H^1(K_v,\T_{t,m,s})} & {H^1(K_v,F_v^-(\T_{t,m,s}))}
	\arrow[two heads, from=1-1, to=2-1]
	\arrow[from=1-2, to=2-2]
	\arrow[from=1-3, to=2-3]
	\arrow[from=1-1, to=1-2]
	\arrow[from=1-2, to=1-3]
	\arrow[from=2-1, to=2-2]
	\arrow[from=2-2, to=2-3]
\end{tikzcd}
    \end{equation*}
    yields the claim.
\end{proof}

\subsection{Construction of the classes}

In this subsection we pursue a suitable Kolyvagin's descent to the big Heegner classes of  Section \ref{sec:big-heegner-classes}. 

\begin{definition}\label{def:classes-zeta}
        Let $t, n\in\Z_{>0}$ with $n$ prime to $Np$. Define
        \begin{equation*}
		\mathfrak{z}(n)_t:=\Cor_{K_t[n]}^{K[np^{t+1}]} U_p^{-t}\gk_{np^{t+1}}\in H^1(K_t[n],\T^\dag),
		\end{equation*}
        where $U_p\in \mathcal{R}^{\times}$ is the $p$-th Hecke operator. Denote also by $\mathfrak{z}(n)_{t,m,s}$ the image of $\mathfrak{z}(n)_t$ in $H^1(K_t[n],\T_{m,s})$.
\end{definition}

As a direct consequence of the compatibility properties of Proposition \ref{prop:properties-big-Heegner-points}, we obtain the following result.

\begin{lemma}\label{lem:compatibility-of-z}
    Let $t, n\in\Z_{>0}$ with $n$ prime to $Np$ and let $\ell\in\mathcal{P}$ coprime with $n$. Then
    \begin{itemize}
        \item[(a)] $\Cor_{K_{t}[n]}^{K_{t+1}[n]}\big(\mathfrak{z}(n)_{t+1}\big)=\mathfrak{z}(n)_{t}.$
        \item[(b)] $\Cor_{K_t[n]}^{K_t[n\ell]}\big(\mathfrak{z}(n\ell)_{t}\big)=T_\ell \, \mathfrak{z}(n)_{t}$.
    \end{itemize}
\end{lemma}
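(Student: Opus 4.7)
The plan is to unwind the definition of $\mathfrak{z}(n)_t$ and in both parts reduce to Proposition \ref{prop:properties-big-Heegner-points} by using transitivity of corestriction. The basic observation is that $K_t\subseteq K[p^{t+1}]$ (since $K_t$ is built inside $K[p^{t+1}]$), so $K_t[n]\subseteq K[np^{t+1}]$; analogously $K_{t+1}[n]\subseteq K[np^{t+2}]$ and $K_t[n\ell]\subseteq K[n\ell p^{t+1}]$. This lets me factor every corestriction that appears through the ring class field in which the relevant big Heegner class is already defined, so that the Euler system relations of Proposition \ref{prop:properties-big-Heegner-points} can be applied directly.

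For part (a), I expand
\begin{equation*}
\Cor_{K_t[n]}^{K_{t+1}[n]}\mathfrak{z}(n)_{t+1}=\Cor_{K_t[n]}^{K_{t+1}[n]}\Cor_{K_{t+1}[n]}^{K[np^{t+2}]}U_p^{-(t+1)}\kappa_{np^{t+2}},
\end{equation*}
use transitivity of corestriction to collapse the two outer corestrictions into $\Cor_{K_t[n]}^{K[np^{t+2}]}$, and then refactor this as $\Cor_{K_t[n]}^{K[np^{t+1}]}\Cor_{K[np^{t+1}]}^{K[np^{t+2}]}$. Pulling the scalar $U_p^{-(t+1)}\in\calR^\times$ past the inner corestriction and applying Proposition \ref{prop:properties-big-Heegner-points}(a) with $c=np^{t+1}$ replaces $\Cor_{K[np^{t+1}]}^{K[np^{t+2}]}\kappa_{np^{t+2}}$ by $U_p\kappa_{np^{t+1}}$; the extra factor of $U_p$ cancels one power of $U_p^{-1}$, yielding $\Cor_{K_t[n]}^{K[np^{t+1}]}U_p^{-t}\kappa_{np^{t+1}}=\mathfrak{z}(n)_t$.

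For part (b), the same pattern applies: expand $\mathfrak{z}(n\ell)_t$ using its definition, use transitivity to rewrite $\Cor_{K_t[n]}^{K_t[n\ell]}\Cor_{K_t[n\ell]}^{K[n\ell p^{t+1}]}$ as $\Cor_{K_t[n]}^{K[np^{t+1}]}\Cor_{K[np^{t+1}]}^{K[n\ell p^{t+1}]}$, and apply Proposition \ref{prop:properties-big-Heegner-points}(b) with $c=np^{t+1}$ to turn $\Cor_{K[np^{t+1}]}^{K[n\ell p^{t+1}]}\kappa_{n\ell p^{t+1}}$ into $T_\ell\kappa_{np^{t+1}}$. The only remaining point is to move $T_\ell$ and $U_p^{-t}$ (both elements of $\calR$) outside the corestriction; this is legitimate because the Hecke action on these cohomology groups is induced from an $\calR$-module structure that commutes with the Galois-theoretic corestriction.

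The computations themselves are essentially formal. The only place that deserves care is the verification of the field inclusions $K_t[n]\subseteq K[np^{t+1}]$ and its variants, which rely on the fact (used in Section \ref{sec:shapiro-lemma}) that the ring class extensions of different conductors fit together as expected and that $K_t$ sits inside $K[p^{t+1}]$ by construction. Once these inclusions are in hand and one checks that scalars in $\calR$ commute with corestriction, both identities reduce by a short manipulation to Proposition \ref{prop:properties-big-Heegner-points}.
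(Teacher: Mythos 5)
Your proof is correct and is exactly the argument the paper has in mind: the paper gives no written proof, stating only that the lemma is ``a direct consequence of the compatibility properties of Proposition \ref{prop:properties-big-Heegner-points}'', and your calculation is the natural filling-in of that claim. The transitivity manipulations and the field inclusions $K_t[n]\subseteq K[np^{t+1}]$, $K_{t+1}[n]\subseteq K[np^{t+2}]$, $K_t[n\ell]\subseteq K[n\ell p^{t+1}]$ are all valid (since $K_t\subseteq K[p^{t+1}]$ by construction and ring class fields are monotone in the conductor), and the $U_p$-bookkeeping in part (a) correctly cancels the extra factor from Proposition \ref{prop:properties-big-Heegner-points}(a).
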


Fix now $m,s\in\Z_{>0}$, $t\ge 0$ and let $n\in\mathcal{N}_{m,s}$. Recall that $\mathcal{G}_n=\Gal(K[n]/K[1])\cong\Gal(K_t[n]/K_t)\cong \prod_{\ell'\mid n}\mathcal{G}_{\ell}$ and fix a generator $\sigma_{\ell}$ of $\mathcal{G}_{\ell}$. Define the \emph{derivative operators} 
\begin{equation*}
    D_{\ell'}=\sum_{i=1}^{\ell}i\gs_{\ell}^i\in \Z[\mathcal{G}_{\ell}]\quad\text{and}\quad D_n=\prod_{\ell\mid n} D_{\ell}\in\Z[\mathcal{G}_n],
\end{equation*}
where $\ell$ runs over all primes dividing $n$. A straightforward computation (see \cite[(3.5)]{Gross91:Kolyvagins-work}) shows that
\begin{equation}\label{eq:formula-derivative-operators}
    (\gs_{\ell}-1)D_{\ell}=\ell+1-\Tr_{\mathcal{G}_{\ell}}
\end{equation}
in $\Z[\mathcal{G}_{\ell}]$, where $\Tr_{\mathcal{G}_{\ell}}=\sum_{i=0}^{\ell}\gs_{\ell}^i$ is the trace of $\mathcal{G}_{\ell}$. 

\begin{proposition}\label{prop:first-descent-results}
    For every $s'\ge s$ and $\ell\in\mathcal{P}_{m,s'}$ coprime with $n$ we have
    \begin{itemize}
        \item[(a)] $ \Cor_{K_t[n]}^{K_t[n\ell]}(\mathfrak{z}(n\ell)_{t,m,s'})=0$ in $H^1(K_t[n],\T_{m,s'})$.
        \item[(b)] $D_{n\ell}\mathfrak{z}(n\ell)_{t,m,s'}\in H^1(K_t[n\ell],\T_{m,s'})^{\mathcal{G}_\ell}$.
        \item[(c)] The restriction map $H^1(K_t[n], \T_{m,s'})\to H^1(K_t[n\ell],\T_{m,s'})^{\mathcal{G}_\ell}$ is an isomorphism.
    \end{itemize}
\end{proposition}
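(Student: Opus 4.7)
My plan is to tackle the three claims in order, each resting on a different input: the Euler system norm relation for (a), a telescoping identity in the group algebra for (b), and inflation--restriction for (c).

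For part (a), I would invoke Lemma \ref{lem:compatibility-of-z}(b), which yields the equality $\Cor_{K_t[n]}^{K_t[n\ell]}\mathfrak{z}(n\ell)_{t} = T_\ell\, \mathfrak{z}(n)_{t}$ in $H^1(K_t[n], \T^\dag)$. Reducing modulo $(\omega_{2,m}, p^{s'})$, the statement amounts to showing that $T_\ell$ acts as zero on $\T_{m,s'}$. Since $\ell \in \mathcal{P}_{m,s'}$, Lemma \ref{lem:Tl-acts-as-0}(ii) ensures that the image of $T_\ell$ in $\calR_m$ is divisible by $p^{s'}$, so its action on $\T_{m,s'} = \T^\dag \otimes_\calR \calR/(\omega_{2,m},p^{s'})$ is indeed trivial.

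For part (b), it suffices to verify that $(\sigma_{\ell'}-1)$ annihilates $D_{n\ell}\mathfrak{z}(n\ell)_{t,m,s'}$ for every prime divisor $\ell'$ of $n\ell$ (after implicitly strengthening the hypothesis so that all such primes lie in $\mathcal{P}_{m,s'}$, which is the natural setting for Kolyvagin's descent at level $p^{s'}$). Writing $D_{n\ell} = D_{\ell'} D_{n\ell/\ell'}$ and applying the telescoping identity (\ref{eq:formula-derivative-operators}) gives
\[
(\sigma_{\ell'}-1)D_{n\ell}\mathfrak{z}(n\ell)_{t,m,s'} \;=\; D_{n\ell/\ell'}\bigl((\ell'+1) - \Tr_{\mathcal{G}_{\ell'}}\bigr)\mathfrak{z}(n\ell)_{t,m,s'}.
\]
The coefficient $\ell'+1$ vanishes modulo $p^{s'}$ by Lemma \ref{lem:Tl-acts-as-0}(ii). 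For the trace term, I would use the standard fact that $\Tr_{\mathcal{G}_{\ell'}}$ coincides with the composition $\Res \circ \Cor$ on cohomology; combining this with Lemma \ref{lem:compatibility-of-z}(b) yields $\Tr_{\mathcal{G}_{\ell'}} \mathfrak{z}(n\ell)_{t,m,s'} = \Res(T_{\ell'}\mathfrak{z}(n\ell/\ell')_{t,m,s'})$, which vanishes by the same argument as in part (a).

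Finally, for (c), the inflation--restriction sequence attached to $\Gal(K_t[n\ell]/K_t[n]) \cong \mathcal{G}_\ell$ gives
\begin{equation*}
0 \to H^1\bigl(\mathcal{G}_\ell, \T_{m,s'}^{G_{K_t[n\ell]}}\bigr) \to H^1(K_t[n], \T_{m,s'}) \to H^1(K_t[n\ell], \T_{m,s'})^{\mathcal{G}_\ell} \to H^2\bigl(\mathcal{G}_\ell, \T_{m,s'}^{G_{K_t[n\ell]}}\bigr),
\end{equation*}
so bijectivity of the restriction map reduces to the vanishing of $\T_{m,s'}^{G_{K_t[n\ell]}}$. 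By the variant of Nakayama's lemma established in Lemma \ref{lem:nakayama-for-Galois-representation}, it suffices to show that $\bar{\T}^{\dag}$ has no non-zero $G_{K_t[n\ell]}$-invariants. The main obstacle here is precisely this vanishing: the proof of Lemma \ref{lem:no-invariants} exploited the fact that $K[n]/\Q$ is unramified outside $nD_K$, whereas $K_t[n\ell]$ is ramified at $p$, so the field $F := \Q(\bar{\T}^{\dag}) \cap K_t[n\ell]$ is only guaranteed to be unramified outside $p$ rather than everywhere. I plan to combine this ramification restriction with the forthcoming big image assumption (Assumption \ref{ass:big-image}) to exclude non-trivial $p$-ramified subfields of $\Q(\bar{\T}^{\dag})$, forcing $F = \Q$; together with the absolute irreducibility of $\bar{\T}^{\dag}$ from Lemma \ref{lem:twist-Hida-representation-irreducible}, this yields the required vanishing and concludes the proof.
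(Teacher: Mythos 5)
Your proofs of (a) and (b) are essentially the paper's, but in (b) you overreach: the statement only asserts $\mathcal{G}_\ell$-invariance, not $\mathcal{G}_{n\ell}$-invariance, so you only need to check annihilation by $\sigma_\ell - 1$. This is precisely the case that your argument handles cleanly, and no ``implicit strengthening of the hypothesis'' is required or warranted -- the primes dividing $n$ lie only in $\mathcal{P}_{m,s}$, and your proposed strengthening would not follow from the hypotheses of the proposition. (The $\mathcal{G}_n$-invariance of $D_n\mathfrak{z}(n)_{t,m,s}$ is deduced afterward in the paper by iterating part (b) with different pairs $(n',\ell')$ at level $p^s$, not by proving the stronger statement in one pass.)

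The genuine gap is in (c). You correctly identify the obstruction: $K_t[n\ell]/\Q$ is ramified at $p$, so the argument of Lemma \ref{lem:no-invariants} does not apply directly, since $\Q(\bar{\T}^\dag)\cap K_t[n\ell]$ need not be trivial. However, your proposed fix -- using Assumption \ref{ass:big-image} to rule out $p$-ramified subfields of $\Q(\bar{\T}^\dag)$ -- cannot work. The Hida representation is genuinely ramified at $p$ (e.g.~via the critical character $\Theta$), so $\Q(\bar{\T}^\dag)$ does contain nontrivial $p$-ramified subfields, and the big-image hypothesis says nothing about the intersection with the anticyclotomic tower. Even if the intersection $F$ were nontrivial, one would still have to argue that $\bar{\T}^\dag$ has no invariants under the image of $G_{K_t[n\ell]}$, which is a subgroup and not the full Galois group, so irreducibility over $\Gal(\Q(\bar{\T}^\dag)/\Q)$ does not suffice. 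The paper sidesteps all of this by applying Shapiro's lemma in degree zero: $H^0(K_t[n\ell],\T_{m,s'})\cong H^0(K[n\ell],\T_{t,m,s'})$, where the right-hand side is taken over $K[n\ell]$, which \emph{is} unramified outside $n\ell D_K$, so Lemma \ref{lem:no-invariants} applies directly to the anticyclotomic twist $\T_{t,m,s'}$. This reduction is the missing idea; with it, the inflation--restriction sequence gives the isomorphism exactly as you set it up.
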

\begin{proof}
    (a) By Lemma \ref{lem:compatibility-of-z}, one has $\Cor_{K_t[n]}^{K_t[n\ell]}(\mathfrak{z}(n\ell)_{t,m,s'})=T_{\ell} \,\mathfrak{z}(n)_{t,m,s'}$ and Lemma \ref{lem:Tl-acts-as-0} implies that $T_{\ell}$ is the zero operator on $H^1(K_t[n],\T_{m,s'})$.

    (b) It suffices to prove that $(\gs_{\ell}-1)D_{n\ell}\mathfrak{z}(n\ell)_{t,m,s'}=0$
    in $H^1(K_t[n],\T_{m,s'})$. Using equation \eqref{eq:formula-derivative-operators} we obtain that
		\begin{equation*}
		(\gs_{\ell}-1)D_{n\ell}\mathfrak{z}(n\ell)_{t,m,s'}=(\ell+1-\Tr_{\mathcal{G}_{\ell}})D_{n}\mathfrak{z}(n\ell)_{t,m,s'}.
		\end{equation*}
		Since $\ell\in\mathcal{P}_{m,s'}$, we have that $\ell+1$ is zero in $\calR_{m,s'}$. By  \cite[Corollary 1.5.7]{Neukirch-Schmidt-Wingberg:cohomology-of-number-fields}, one has that $\Tr_{\mathcal{G}_{\ell}}=\res_{K_t[n]}^{K_t[n\ell]}\circ \Cor_{K_t[n]}^{K_t[n\ell]}$ and we conclude by applying point (a).

    (c) By Shapiro's lemma, we have that $H^0(K_t[n\ell],\T_{m,s'})\cong H^0(K[n\ell],\T_{t,m,s'})$, which is trivial thanks to Lemma \ref{lem:no-invariants}. We conclude by applying the inflation--restriction exact sequence.
\end{proof}

As a consequence of this proposition, we have that $D_n\mathfrak{z}(n)_{t,m,s}\in H^1(K_t[n],\T_{m,s})^{\mathcal{G}_n}$ and that the restriction map $H^1(K_t[1], \T_{m,s})\to H^1(K_t[n],\T_{m,s})^{\mathcal{G}_n}$ is an isomorphism.

\begin{definition}\label{def:derived-classes}
Define $\boldsymbol{\gk}'(n)_{t,m,s}:=\Cor_{K_t}^{K_t[1]}\big(\res_{K_t[1]}^{K_t[n]}\big)^{-1}D_n\mathfrak{z}(n)_{t,m,s}\in H^1(K_t, \T_{m,s})$.
\end{definition}

Recalling the isomorphism $\sh_t\colon H^1(K_t,\T_{m,s})\cong H^1(K,\T_{t,m,s})$ of \eqref{eq:shapiro-isomorphisms}, we are able to give the following final definition. 

\begin{definition}
    For every $n\in\mathcal{N}_{m,s}$ and $t\ge 0$, define
    \begin{equation*}
            \boldsymbol{\gk}(n)_{t,m,s}:=\sh_t(\boldsymbol{\gk}'(n)_{t,m,s})\in H^1(K,\T_{t,m,s}).
    \end{equation*}
\end{definition}

For every $\ga\ge t$ let $\psi_{\ga,t}\colon H^1(K,\T_{\ga,m,s})\to H^1(K,\T_{t,m,s})$ be the natural projection. We recover the following compatibility in the anticyclotomic tower.

    \begin{lemma}\label{lem:vertical-compatibility-kn}
	    For every $\ga> t\ge 0$ we have that $\psi_{\ga,t}(\boldsymbol{\gk}(n)_{\ga,m,s})=\boldsymbol{\gk}(n)_{t,m,s}$.
	\end{lemma}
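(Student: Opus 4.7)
The plan is to unwind the definition of $\boldsymbol{\gk}(n)_{t,m,s}$ and reduce everything to the vertical Euler system relation provided by Lemma \ref{lem:compatibility-of-z}(a).

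\textbf{Step 1: Reduction via Shapiro's diagram.} Under the identification $\T_{t,m,s}\cong \T_{m,s}\otimes_{\Z_p}\Z_p[\Gal(K_t/K)]$ and analogously for $\alpha$, the natural projection $\T_{\alpha,m,s}\to\T_{t,m,s}$ corresponds to the map $\psi_{\alpha,t}$ of group algebras from diagram \eqref{eq:diagram-Shapiro-lemma}. Hence that diagram gives
\begin{equation*}
    \psi_{\alpha,t}\circ \sh_\ga=\sh_t\circ \Cor^{K_\alpha}_{K_t},
\end{equation*}
so it suffices to prove the intermediate equality $\Cor^{K_\alpha}_{K_t}(\boldsymbol{\gk}'(n)_{\alpha,m,s})=\boldsymbol{\gk}'(n)_{t,m,s}$ in $H^1(K_t,\T_{m,s})$.

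\textbf{Step 2: Linear disjointness and a Mackey-type square.} Since the class number of $K$ is prime to $p$, $K[1]/K$ has degree prime to $p$ while $K_\alpha/K$ has $p$-power degree; moreover $K[n]/K$ is ramified only at primes dividing $n$ whereas $K_\alpha/K$ is ramified only at $p$. These observations yield $K[n]\cap K_\alpha=K$ and a straightforward degree computation shows that
\begin{equation*}
    K_\alpha[1]\cap K_t[n]=K_t[1] \quad\text{and}\quad K_\alpha[1]\cdot K_t[n]=K_\alpha[n].
\end{equation*}
Under these linear disjointness conditions, the square
\begin{equation*}
    \begin{tikzcd}
	{H^1(K_\alpha[1],\T_{m,s})} & {H^1(K_\alpha[n],\T_{m,s})} \\
	{H^1(K_t[1],\T_{m,s})} & {H^1(K_t[n],\T_{m,s})}
	\arrow["\res", from=1-1, to=1-2]
	\arrow["\res", from=2-1, to=2-2]
	\arrow["{\Cor^{K_\alpha[1]}_{K_t[1]}}"', from=1-1, to=2-1]
	\arrow["{\Cor^{K_\alpha[n]}_{K_t[n]}}", from=1-2, to=2-2]
    \end{tikzcd}
\end{equation*}
commutes by the standard Mackey formula in the degenerate case.

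\textbf{Step 3: Compatibility upstairs at level $n$.} By Lemma \ref{lem:compatibility-of-z}(a) and transitivity of corestriction, one has $\Cor^{K_\alpha[n]}_{K_t[n]}(\mathfrak{z}(n)_{\alpha,m,s})=\mathfrak{z}(n)_{t,m,s}$. Moreover, since $K_\alpha[n]/K_t$ splits as a direct product $\Gal(K_\alpha/K_t)\times\Gal(K[n]/K)$ (by the disjointness above), the operator $D_n\in\Z[\mathcal{G}_n]$ commutes with $\Cor^{K_\alpha[n]}_{K_t[n]}$. Combining, $\Cor^{K_\alpha[n]}_{K_t[n]}(D_n\mathfrak{z}(n)_{\alpha,m,s})=D_n\mathfrak{z}(n)_{t,m,s}$. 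Writing $\xi_?:=(\res_{K_?[1]}^{K_?[n]})^{-1}D_n\mathfrak{z}(n)_{?,m,s}$, the commutative square of Step 2 applied to $\xi_\ga$ gives $\res_{K_t[1]}^{K_t[n]}(\Cor^{K_\alpha[1]}_{K_t[1]}(\xi_\alpha))=D_n\mathfrak{z}(n)_{t,m,s}$. By Proposition \ref{prop:first-descent-results}(c) the restriction map is injective, hence $\Cor^{K_\alpha[1]}_{K_t[1]}(\xi_\alpha)=\xi_t$.

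\textbf{Step 4: Conclusion.} Transitivity of corestriction along the two routes $K_\alpha\subseteq K_\alpha[1]$ and $K_t\subseteq K_t[1]$ then yields
\begin{equation*}
    \Cor^{K_\alpha}_{K_t}\big(\Cor^{K_\alpha[1]}_{K_\alpha}(\xi_\alpha)\big)=\Cor^{K_t[1]}_{K_t}\big(\Cor^{K_\alpha[1]}_{K_t[1]}(\xi_\alpha)\big)=\Cor^{K_t[1]}_{K_t}(\xi_t),
\end{equation*}
which is precisely the desired identity $\Cor^{K_\alpha}_{K_t}(\boldsymbol{\gk}'(n)_{\alpha,m,s})=\boldsymbol{\gk}'(n)_{t,m,s}$. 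Combined with Step 1, this gives the statement. The main technical point to check is the linear disjointness input in Step 2, which rules out any ``interference'' between the ring class tower at $n$ and the anticyclotomic tower; everything else is a formal manipulation of corestriction and of the Kolyvagin derivative.
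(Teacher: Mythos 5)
Your proof is correct and follows essentially the same strategy as the paper's: reduce to a corestriction identity on $\T_{m,s}$ via the Shapiro diagram \eqref{eq:diagram-Shapiro-lemma}, use the vertical Euler system relation of Lemma \ref{lem:compatibility-of-z}(a), and exploit the linear disjointness of $K_\alpha[1]$ and $K_t[n]$ over $K_t[1]$ to commute corestriction with $D_n$ and with the inverse of the restriction map. The only difference is that you spell out the Mackey-type commutative square and the field-theoretic disjointness computations that the paper compresses into a citation to Neukirch--Schmidt--Wingberg.
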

	\begin{proof}
		By Lemma \ref{lem:compatibility-of-z} we have that $\Cor_{K_{t}[n]}^{K_{t+1}[n]}(\mathfrak{z}(n)_{\ga})=\mathfrak{z}(n)_{t}$. Corestriction commutes with the action of $D_n$ (see \cite[Proposition 1.5.4]{Neukirch-Schmidt-Wingberg:cohomology-of-number-fields}), with reduction to $\T_{m,s}$ and, since $K_\ga[1]$ and $K_t[n]$ are disjoint over $K_t[1]$, also with the inverse of the restriction defining $\boldsymbol{\gk}'(n)_{\ga,m,s}$ and  $\boldsymbol{\gk}'(n)_{t,m,s}$ (see \cite[Corollary 1.5.8]{Neukirch-Schmidt-Wingberg:cohomology-of-number-fields}). Therefore, we obtain that
        \begin{equation*}
            \Cor_{K_\ga/K_t}(\boldsymbol{\gk}'(n)_{\ga,m,s})=\boldsymbol{\gk}'(n)_{t,m,s}.
        \end{equation*}
        The commutative diagram \eqref{eq:diagram-Shapiro-lemma} with $F=K$, $F_\ga=K_\ga$, $F_t=K_t$ and $T=\T_{m,s}$ yields the claimed equality.
\end{proof}

\subsection{Kolyvagin primes and main result}\label{sec:kolyvagin-primes-and-main-result}

The aim of the rest of this chapter is to show that a slight modification of the classes $\boldsymbol{\gk}(n)_{t,m,s}$ form a modified universal Kolyvagin system. In order to do this, besides our running assumptions, we will need the following big image assumption on the representation $\T^{\dag}$. 

\begin{assumption}\label{ass:big-image}
    The image of $G_\Q$ in $\Aut_{\mathcal{R}}(\T^{\dag})$ contains the scalars $\Z_p^\times$.
\end{assumption}

\begin{remark}
    Let us make a few comments on this assumption.
    \begin{itemize}
        \item[(i)] In what follows, Assumption \ref{ass:big-image} is used in the proof of Lemmas \ref{lem:finer-choice-of-primes} and \ref{lem:Howard-result}. In order to obtain Theorems \ref{thm:A} and \ref{thm:B}, one could relax this and assume that the image of $G_\Q$ in $\Aut_{\mathcal{R}}(\T^{\dag})$ contains the scalars $(\Z_p^\times)^n$, for some natural number $n$ not divisible by $p-1$. However, this would make the notation and the arguments of the next pages heavier, so we leave this slight generalization to the interested reader.
        \item[(ii)] There is an infinite class of Hida families for which Assumption \ref{ass:big-image} holds. For example, this holds for infinitely many primes $p$ if $N$ is square-free and the fixed modular form $f$ has weight $2$ and trivial character. Indeed, by \cite[Theorem 4.15]{Vigni22:shafarevich-tate-groups}, the image of $G_\Q$ in $\Aut_{\mathcal{R}}(\T^{\dag})$ contains $\SL_2(\mathcal{R})$ whenever $p$ lies in an explicit set of primes of density $1$. By Proposition \ref{prop:characteristic-polynomial-Frobenius}, if $\ell\nmid Np$ then $\Fr_\ell$ has characteristic polynomial with determinant $\ell$. Combining this two facts with the density of the set of primes $\ell\nmid Np$ in $\Z_p$, we conclude that the image of $G_\Q$ contains all elements of $\Aut_{\mathcal{R}}(\T^{\dag})$ whose determinant lies in $\Z_p$.
        \item[(iii)] If one assumes that the residual representation attached to $f$ contains $\SL_2(\F_q)$ for some $q\ge 7$, it is possible to compute explicit subgroups of $\Aut_{\mathcal{R}}(\T^{\dag})$ that lie in the image of $G_\Q$, by combining \cite[Corollary 11.2]{Conti-Lang-Medvedovsky23:Big-images} with an argument of Papier (see \cite[Section 4]{Ribet_1985}).
    \end{itemize}
\end{remark}

We now need to shrink the set $\mathcal{P}_{m,s}$ of admissible primes. As a consequence of Lemma \ref{lem:Tl-acts-as-0}, the two quantities $\ell+1\pm T_\ell$ are divisible by $p^s$ in $\calR_m$. This motivates the following definition.

\begin{definition}\label{dfn:kolyvagin-primes}
    For every $m,s\in\Z_{>0}$ let $\mathcal{L}_{m,s}$ be the set of primes of $\mathcal{P}_{m,s}$ such that the two quantities $\frac{\ell+1\pm T_\ell}{p^s}$ are units in $\calR_m$. Define also
    \begin{equation*}
        \mathcal{P}':=\bigcup_{s\ge m>0} \mathcal{L}_{m,s}.
    \end{equation*}
    Set $\mathcal{P}_{m,s}':=\mathcal{P}'\cap\mathcal{P}_{m,s}$ and let $\mathcal{N}_{m,s}'$ be the set of all square-free products of elements of $\mathcal{P}_{m,s}'$.
\end{definition}

As a consequence of Assumption \ref{ass:big-image}, we obtain the following result.

\begin{lemma}\label{lem:finer-choice-of-primes}
    For every $m,s\in\Z_{>0}$ with $s\ge m$ the set $\mathcal{L}_{m,s}$ is infinite.
\end{lemma}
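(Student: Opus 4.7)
The plan is to prove this via Chebotarev applied to the extension $K(\T_{m,s+1})/\Q$, using Assumption \ref{ass:big-image} to manufacture an element of the Galois group whose Frobenius image in $\Aut_{\calR}(\T^{\dag})$ has exactly the desired trace and determinant modulo $p^{s+1}$.

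First, I would pick an auxiliary scalar $u = 1 + p^{s}w \in \Z_p^\times$ with $w \in \Z_p^\times$ (so that $v_p(1-u^{2}) = s$). By Assumption \ref{ass:big-image}, there exists $g_u \in G_\Q$ whose image in $\Aut_{\calR}(\T^{\dag})$ is the scalar $u$. Let $\tau_c$ denote a chosen complex conjugation in $G_\Q$ and set $\sigma := \tau_c\, g_u \in G_\Q$. Since $u \equiv 1 \pmod{p^s}$, the element $g_u$ acts trivially on $\T_{m,s}$, hence its restriction to $K(\T_{m,s})$ is trivial; therefore $\sigma|_{K(\T_{m,s})}$ is conjugate to $\tau_c|_{K(\T_{m,s})}$. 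On $\T_{m,s+1}$, the characteristic polynomial for $\tau_c$ is $X^2-1$ (see the discussion after \eqref{eq:pairing}), so $\sigma$ has eigenvalues $\pm u$, giving $\tr(\sigma) = 0$ and $\det(\sigma) = -u^2$ in $\calR_{m,s+1}$.

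Next, apply Chebotarev's density theorem to the finite extension $K(\T_{m,s+1})/\Q$ to produce infinitely many rational primes $\ell$, coprime with $Np D_K$, whose arithmetic Frobenius in $\Gal(K(\T_{m,s+1})/\Q)$ is conjugate to $\sigma$. Each such $\ell$ automatically lies in $\mathcal{P}_{m,s}$ by step one. Using Lemma \ref{lem:charachteristic-polynomial-frobenius-Tm} (which requires $s \ge m$) and matching traces and determinants on $\T_m$ with the computations for $\sigma$, one obtains the congruences
\begin{equation*}
    T_\ell \equiv 0 \pmod{p^{s+1}} \quad \text{and} \quad \ell \equiv -u^{2} \pmod{p^{s+1}}
\end{equation*}
in $\calR_m$. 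A direct expansion of $1 - u^{2} = -2p^{s}w - p^{2s}w^{2}$ then gives, in $\calR_m$,
\begin{equation*}
    \frac{\ell + 1 \pm T_\ell}{p^{s}} \equiv \frac{1 - u^2}{p^s} \pmod{p} \equiv -2w \pmod{p},
\end{equation*}
where the $T_\ell / p^s$ term is absorbed since $T_\ell \in p^{s+1}\calR_m \subseteq p \cdot p^s \calR_m$. Because $p$ is odd and $w \in \Z_p^\times$, the image of $-2w$ in the residue field $\F_\calR$ of $\calR_m$ is nonzero, so $(\ell+1 \pm T_\ell)/p^{s}$ is a unit in the local ring $\calR_m$. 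Hence $\ell \in \mathcal{L}_{m,s}$, and Chebotarev provides infinitely many such $\ell$.

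The only delicate points I expect are verifying that $g_u$ really can be chosen to act as the scalar $u$ on the full representation $\T^\dag$ (this is exactly the content of Assumption \ref{ass:big-image}, with the scalar taken in $\Z_p^\times \hookrightarrow \calR^\times$), and being careful that the computation of $\det(\sigma)$ and $\tr(\sigma)$ modulo $p^{s+1}$ transfers to the right characteristic polynomial on $\T_m$ -- this is where the hypothesis $s \ge m$ is essential, via Lemma \ref{lem:charachteristic-polynomial-frobenius-Tm}, because otherwise the sign $(-1)^{(k+j)/2-1}\Theta(\Fr_\ell)$ need not reduce correctly in $\calR_m$.
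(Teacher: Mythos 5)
Your proof is correct and follows the same high-level strategy as the paper (Chebotarev plus Assumption \ref{ass:big-image} to manufacture a suitable conjugacy class), but the construction is genuinely different and slightly cleaner. The paper fixes an auxiliary prime $\ell_0\in\mathcal{P}_{m,s}$, multiplies $\Fr_{\ell_0}$ by a scalar $\sigma_\alpha$ with $\alpha\in 1+p^s\Z_p$, applies Chebotarev in $\Gal(K(\T_{m,2s})/\Q)$, and then argues that for a generic choice of $x$ with $\alpha=1+p^sx$ the two quantities $(\alpha^2\ell_0+1\pm\alpha T_{\ell_0})/p^s$ avoid the maximal ideal of $\calR_m$ (this is where the paper needs $\bmod\, p^{2s}$ and a genericity argument to handle both signs). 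You instead multiply the complex conjugation $\tau_c$ directly by a scalar $g_u$ with $u=1+p^sw$, $w\in\Z_p^\times$, which has the pleasant effect of forcing $\Tr(\Fr_\ell\mid\T_{m,s+1})=0$, i.e.\ $T_\ell\equiv 0\pmod{p^{s+1}}$. As a result the $T_\ell/p^s$ term drops out modulo $p$, both $\pm$ conditions collapse to the single computation $(\ell+1\pm T_\ell)/p^s\equiv -2w\pmod{p\calR_m}$, and it suffices to work in $\Gal(K(\T_{m,s+1})/\Q)$ rather than $\bmod\,p^{2s}$. So your argument avoids the need to pick $\ell_0$ and the avoid-two-values step, at the cost of nothing.

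One point worth making explicit in either proof: when invoking Assumption \ref{ass:big-image} to produce $g_u$ acting as the scalar $u$, one also needs $g_u$ to restrict trivially to $K$ (otherwise $\sigma=\tau_c g_u$ would act trivially on $K$ and Chebotarev would produce split rather than inert primes). This is harmless: $K/\Q$ is ramified only at primes dividing $D_K$, while $\Q(\T^\dag)/\Q$ is unramified outside $Np$, and these are coprime by Assumption \ref{ass:field-K}, so $K$ and $\Q(\T^\dag)$ are linearly disjoint over $\Q$ and the coset of elements of $G_\Q$ acting as the scalar $u$ meets $G_K$. The paper omits this verification too, so I would not call it a gap, but if you write this up it is worth a line.
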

\begin{proof}
    First, notice that an element $r\in\mathcal{R}_m$ is a unit if and only if its projection to one (hence any) nonzero quotient of $\calR_m$ is a unit.

    Fix a prime $\ell_0\in\mathcal{P}_{m,s}$. By Lemma \ref{lem:charachteristic-polynomial-frobenius-Tm}, we have that
    \begin{equation*}
        \Tr(\Fr_{\ell_0}\mid \T_m)=(-1)^{\frac{k+j}{2}-1}T_{\ell_0}\quad\text{and}\quad\det(\Fr_{\ell_0}\mid \T_m)=\ell_0.
    \end{equation*}
    Let now $\ga\in 1+p^s\Z_p$ and $\gs_\ga\in G_\Q$ such that its image in $\Aut(\T_m)$ is the scalar $\ga$, whose existence is granted by Assumption \ref{ass:big-image}. Then one obtains
    \begin{equation*}
        \Tr(\Fr_{\ell_0}\gs_\ga\mid \T_m)=(-1)^{\frac{k+j}{2}-1}\ga T_{\ell_0}\quad\text{and}\quad\det(\Fr_{\ell_0}\gs_\ga\mid \T_m)=\ga^2\ell_0.
    \end{equation*}
    Let $\mathcal{L}_{m,s}(\ell_0, \alpha)$ be the set consisting of those primes $\ell$ such that $\Fr_\ell$ is conjugated to $\Fr_{\ell_0}\gs_\ga$ in $\Gal(K(\T_{m,2s})/\Q)$. By Chebotarev's density theorem, the set $\mathcal{L}_{m,s}(\ell_0, \alpha)$ has infinite cardinality. For any $\ell\in\mathcal{L}_{m,s}(\ell_0, \alpha)$, one has
    \begin{equation}\label{eq:proof-finer-set-of-primes}
        T_{\ell}\equiv \ga T_{\ell_0}\quad\text{and}\quad \ell\equiv\ga^2\ell_0
    \end{equation}
    as elements of $\calR_{m,2s}$. Reducing to $\calR_{m,s}$ the above congruences and using the fact that $\ga\equiv 1\bmod p^s$, one easily sees that $\mathcal{L}_{m,s}(\ell_0, \alpha)\subseteq\mathcal{P}_{m,s}$. 

    We now look for an $\ga\in 1+p^s\Z_p$ such that the division of the two elements
    \begin{equation}\label{eq:elements-to-be-divided}
        \ga^2\ell_0+1\pm\ga T_{\ell_0}
    \end{equation}
    by $p^s$ is a unit of $\mathcal{R}_{m,2s}$. The existence of such an $\ga$ would imply that the set of primes $\mathcal{L}_{m,s}(\ell_0,\ga)\subseteq\mathcal{L}_{m,s}$, since, by \eqref{eq:proof-finer-set-of-primes}, we have the congruences
    \begin{equation*}
        \ell+1\pm T_\ell\equiv \ga^2\ell_0+1\pm\ga T_{\ell_0}
    \end{equation*}
    in $\calR_{m,2s}$, and this would conclude the proof.

    Writing $\alpha=1+p^{s}x$ for $x\in\Z_p$, dividing the elements \eqref{eq:elements-to-be-divided} by $p^s$ and reducing them modulo the maximal ideal of $\mathcal{R}_m$, one finds that in order to get such an $\alpha$ it is sufficient to require that the reduction of $x$ avoids two explicit values. Since $p>2$, this is always possible.
\end{proof}

As a consequence of this lemma, we also have that the sets $\mathcal{P}_{m,s}'$ are infinite. If $\ell\in\mathcal{P}_{m,s}'$, let $s(\ell)$ be the unique positive integer such that such that $\ell\in \mathcal{L}_{m,s(\ell)}$. In this case, thanks to Lemma \ref{lem:finer-choice-of-primes}, the values $\frac{\ell+1\pm T_\ell}{p^{s(\ell)}}$ are units in $\mathcal{R}_m$. Moreover, we have the following result.

\begin{corollary}\label{cor:finer-choice-of-primes}
    If $\ell\in\mathcal{P}_{m,s}'$, the image of $\frac{(\ell+1)\Fr_\ell\pm T_\ell}{p^{s(\ell)}}$ in $\Aut(\T_{m,s})$ is invertible.
\end{corollary}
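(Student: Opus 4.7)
The key observation is that the determinant of $(\ell+1)\Fr_\ell \pm T_\ell$, viewed as an endomorphism of the free rank-$2$ $\calR_m$-module $\T_m$, equals $p^{2s(\ell)}$ times a unit of $\calR_m$. Once established, this immediately forces $\frac{(\ell+1)\Fr_\ell \pm T_\ell}{p^{s(\ell)}}$ to have unit determinant in $\calR_m$, hence to act as an automorphism on $\T_m$, whose reduction modulo $p^s$ yields the claimed automorphism of $\T_{m,s}$.

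First, I would check that $(\ell+1)\Fr_\ell \pm T_\ell$ actually lies in $p^{s(\ell)}\End_{\calR_m}(\T_m)$, so that the quotient is well defined. By definition of $\mathcal{L}_{m,s(\ell)}$, the elements $u_{\pm} := (\ell+1 \pm T_\ell)/p^{s(\ell)}$ are units in $\calR_m$; since $p\neq 2$, setting $\alpha := (u_+ + u_-)/2$ and $\beta := (u_+ - u_-)/2$ one obtains $\ell + 1 = p^{s(\ell)}\alpha$ and $T_\ell = p^{s(\ell)}\beta$, which handles the divisibility.

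For the determinant computation I would invoke Lemma~\ref{lem:charachteristic-polynomial-frobenius-Tm} (applicable since $s(\ell)\ge m$), giving $\Fr_\ell$ the characteristic polynomial $X^2 - \epsilon T_\ell X + \ell$ on $\T_m$ with $\epsilon := (-1)^{\frac{k+j}{2}-1}$. The standard formula $\det(a\Fr_\ell + b) = a^2\ell + ab\,\epsilon T_\ell + b^2$, valid for scalars $a,b\in\calR_m$ acting on a free rank-$2$ module, then yields
\begin{equation*}
    \det\bigl((\ell+1)\Fr_\ell \pm T_\ell\bigr) = (\ell+1)^2\ell \pm (\ell+1)\epsilon T_\ell^2 + T_\ell^2 = p^{2s(\ell)}\bigl[\alpha^2\ell + \beta^2 \pm p^{s(\ell)}\alpha\beta^2\epsilon\bigr].
\end{equation*}
Reducing the bracketed factor modulo $\m_\calR$ kills the term carrying $p^{s(\ell)}$, and using $\ell \equiv -1 \pmod{\m_\calR}$ the bracket becomes $-\alpha^2 + \beta^2 = (\beta-\alpha)(\beta+\alpha) = -u_+ u_-$, a nonzero element of the residue field. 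By Nakayama's lemma, the bracket is a unit in $\calR_m$, completing the determinant computation.

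I do not foresee any serious obstacle. The two sign choices are handled simultaneously because the term distinguishing them is divisible by $p^{s(\ell)}$ and thus invisible modulo $\m_\calR$; the only mildly delicate point is the identity $\beta^2 - \alpha^2 = -u_+ u_-$, which relies on expressing $\alpha$ and $\beta$ as the half-sum and half-difference of $u_+$ and $u_-$, i.e.~on the hypothesis $p\ne 2$.
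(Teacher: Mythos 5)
The proof is correct, and the core idea matches the paper's: one shows the determinant of $\frac{(\ell+1)\Fr_\ell\pm T_\ell}{p^{s(\ell)}}$ is a unit. The implementation differs in a mild but genuine way. The paper works directly on $\T_{m,s}$, where $\Fr_\ell$ is conjugate to $\tau_c$ and hence diagonalizable with eigenvalues $\pm 1$; in an eigenbasis the morphism becomes $\mathrm{diag}(u_{\pm},-u_{\mp})$, so the determinant is $-u_+u_-$, visibly a unit. You instead stay on $\T_m$, invoke Lemma~\ref{lem:charachteristic-polynomial-frobenius-Tm} for the characteristic polynomial of $\Fr_\ell$ (legitimate since $s(\ell)\ge m$ by construction of $\mathcal{P}'$), apply the trace–determinant formula, factor out $p^{2s(\ell)}$, and reduce mod $\m_\calR$ to recover $-u_+u_-$. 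Both routes land on the same unit; yours has the modest bonus of exhibiting the morphism as an automorphism of $\T_m$ itself (not merely of its quotient $\T_{m,s}$), at the cost of a slightly longer calculation, while the paper's diagonalization is unavailable on $\T_m$ because the conjugacy of $\Fr_\ell$ with $\tau_c$ only holds in $\Gal(K(\T_{m,s})/\Q)$. Two minor remarks: the divisibility of $\ell+1$ and $T_\ell$ by $p^{s(\ell)}$ in $\calR_m$ is also recorded directly in Lemma~\ref{lem:Tl-acts-as-0}, so your half-sum/half-difference trick, though clean, is not strictly needed; and the final step uses the local-ring fact that an element outside $\m_{\calR_m}$ is a unit, which is more elementary than Nakayama's lemma.
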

\begin{proof}
    By point (b) of Definition \ref{dfn:admissible-primes}, we have that $\Fr_\ell$ is conjugated to the complex conjugation $\tau_c$ in $\Gal(K(\T_{m,s})/\Q)$. Therefore, we can find an $\calR_{m,s}$-basis of $\T_{m,s}$ made of eigenvectors (with eigenvalues $1$ and $-1$) for the action of $\Fr_\ell$. Using this basis one can easily compute the determinants of the morphisms $\frac{(\ell+1)\Fr_\ell\pm T_\ell}{p^{s(\ell)}}$ and conclude that they are units.
\end{proof}

The aim of the rest of this chapter will be to prove the following result.

\begin{theorem}\label{thm:main-thm}
    Under Assumptions \ref{ass:residual-representation-irreducible}, \ref{ass:field-K}, \ref{ass:tamagawa-factors-at-N}, \ref{ass:H.stz} and \ref{ass:big-image}, there is a set of automorphisms $\{\chi_{n,\ell}\}$ of $\T^{\ac}$ and a universal Kolyvagin system $\boldsymbol{\gk}^{\ac}\in\overline{\KS}(\T^{\ac},\mathcal{F}_{\Gr},\mathcal{P}',\{\chi_{n,\ell}\})$ such that
    \begin{equation*}
        \boldsymbol{\gk}^{\ac}(1)=\varprojlim_t\big(\Cor^{K_t[1]}_{K_t}\mathfrak{z}(1)_{t}\big)\in H^1(K,\T^{\ac}).
    \end{equation*}
\end{theorem}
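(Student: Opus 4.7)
The plan is to take the derived classes $\boldsymbol{\gk}(n)_{t,m,s}$ from Definition \ref{def:derived-classes}, tensor them with a canonical generator of $\mathcal{G}(n)$ to keep track of the derivation, and show directly that the resulting family satisfies axioms (K1) and (K2) with a suitable choice of twists $\chi_{n,\ell}$. Concretely, having fixed generators $\sigma_\ell$ of each $\mathcal{G}_\ell$, I set
\begin{equation*}
    \boldsymbol{\gk}^{\ac}(n)_{t,m,s}\;:=\;\boldsymbol{\gk}(n)_{t,m,s}\otimes\bigotimes_{\ell\mid n}\sigma_\ell\;\in\;H^1(K,\T_{t,m,s})\otimes\mathcal{G}(n),
\end{equation*}
and check the axioms for this candidate. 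The final universal Kolyvagin system is then produced by passing to the limit in $(t,m,s)$.

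For (K1), I would verify the local condition place by place, using Lemma \ref{lem:easier-description-Greenberg-condition} to recast the Greenberg structure on $\T_{t,m,s}$. At primes $v\nmid Npn$, unramifiedness propagates from the big Heegner classes $\kappa_c$, which are themselves unramified outside $Npn$. At primes $v\mid N$, Assumption \ref{ass:tamagawa-factors-at-N} (via Proposition \ref{prop:tamagawa-numbers-at-N}) is precisely what is required for unramifiedness to descend to the quotients $\T_{t,m,s}$. At primes $v\mid p$, Longo--Vigni's construction places the Heegner classes in the appropriate $F_v^+$-part by ordinarity, and Assumption \ref{ass:H.stz} (through the surjection used in the proof of Lemma \ref{lem:easier-description-Greenberg-condition}) guarantees that the Greenberg condition descends. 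Finally, at primes $\gl=(\ell)$ with $\ell\mid n$, the transverse condition follows from the derivation: combining $(\sigma_\ell-1)D_\ell = \ell+1-\Tr_{\mathcal{G}_\ell}$ with part (a) of Proposition \ref{prop:first-descent-results} forces the localization of $D_n\mathfrak{z}(n)_{t,m,s}$ at $\gl$ to lie in $H^1_{\tr}(K_\gl,\T_{t,m,s})$, after splitting $H^1 = H^1_{\ur}\oplus H^1_{\tr}$.

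For (K2), the key Kolyvagin relation, I would exploit the Eichler--Shimura relation (Proposition \ref{prop:Eichler-Shimura-relation-big-Heegner-points}), which asserts that $\kappa_{n\ell}$ and $\Fr_{\gl_n/\ell}(\kappa_n)$ coincide locally at the prime above $\gl$. Applying the derivation operator $D_\ell$ and invoking $(\sigma_\ell-1)D_\ell = \ell+1-\Tr_{\mathcal{G}_\ell}$, the singular image of $\boldsymbol{\gk}(n\ell)_{t,m,s}$ at $\gl$ becomes, through the finite-singular isomorphism of \eqref{eq:definition-finite-singular}, comparable to the image of $\boldsymbol{\gk}(n)_{t,m,s}$ twisted by the operator $\tfrac{(\ell+1)\Fr_\ell - T_\ell}{p^{s(\ell)}}$ (up to sign, the second option being the other choice of eigenvalue for $\tau_c$). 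Corollary \ref{cor:finer-choice-of-primes} is then exactly the statement that this operator descends to an automorphism of each $\T_{t,m,s}$; choosing a coherent lift $\chi_{n,\ell}\in\Aut_{\calR^{\ac}}(\T^{\ac})$ of its inverse makes (K2) hold by construction.

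To close, I would invoke Lemma \ref{lem:vertical-compatibility-kn} for vertical compatibility in $t$ and observe that reduction in $(m,s)$ is automatic from the construction, so that $\boldsymbol{\gk}^{\ac}:=(\boldsymbol{\gk}^{\ac}(n)_{t,m,s})$ defines an element of $\overline{\KS}(\T^{\ac},\mathcal{F}_{\Gr},\mathcal{P}',\{\chi_{n,\ell}\})$. For $n=1$ the derivative $D_1$ is trivial, so $\boldsymbol{\gk}(1)_{t,m,s} = \sh_t\big(\Cor_{K_t[1]}^{K_t}\mathfrak{z}(1)_{t,m,s}\big)$, and passing to $\varprojlim_{t,m,s}$ under the Shapiro isomorphism $\sh_\infty$ of \eqref{eq:shapiro-isomorphisms} recovers $\varprojlim_t\Cor_{K_t[1]}^{K_t}\mathfrak{z}(1)_t$, as required. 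The main obstacle I anticipate is Step 2: in the derived equality arising from Eichler--Shimura, the quantity $(\ell+1)\Fr_\ell \mp T_\ell$ only becomes visible after dividing by the precise power $p^{s(\ell)}$, and one needs this division to produce an \emph{automorphism} of $\T^{\ac}$ (rather than a mere endomorphism) in order to obtain $\chi_{n,\ell}\in\Aut_{\calR^{\ac}}(\T^{\ac})$. This is precisely the role of the refined set $\mathcal{P}'$ of Definition \ref{dfn:kolyvagin-primes}, and the big image Assumption \ref{ass:big-image} intervenes crucially (through Lemma \ref{lem:finer-choice-of-primes}) to guarantee that $\mathcal{P}'$ is still large enough for the Kolyvagin machinery to produce nontrivial Selmer bounds downstream.
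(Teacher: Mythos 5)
Your proposal is correct and follows the paper's own strategy: you construct the system exactly as the paper does (derived classes tensored with $\bigotimes_{\ell\mid n}\sigma_\ell$), verify (K1) via the same combination of Lemma \ref{lem:easier-description-Greenberg-condition}, Assumption \ref{ass:tamagawa-factors-at-N} and Assumption \ref{ass:H.stz} (this is the content of the paper's Theorem \ref{thm:kolyvagin-classes-lie-in-the-Selmer}), verify (K2) from Eichler--Shimura plus the finite-singular decomposition and the divided operators of Corollary \ref{cor:finer-choice-of-primes} (this is Lemma \ref{lem:key-formula}, where the actual $\vartheta_\ell$ is the quotient $\big(\tfrac{(\ell+1)\Fr_\ell-T_\ell}{p^{s(\ell)}}\big)^{-1}\cdot\tfrac{(\ell+1)-(-1)^{(k+j)/2-1}T_\ell\Fr_\ell}{p^{s(\ell)}}$, both factors of which are invertible by the corollary — your "up to sign" caveat points at precisely this), interpolate via Lemma \ref{lem:vertical-compatibility-kn}, and evaluate at $n=1$ through Shapiro. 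This matches the paper's outline and proof essentially step for step, so no comparison beyond noting the agreement is needed.
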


\begin{remark}
    The classes $\Cor^{K_t[1]}_{K_t}\mathfrak{z}(1)_{t}$ form a projective system thanks to point (a) of Lemma \ref{lem:compatibility-of-z}. The strategy for the proof of Theorem \ref{thm:main-thm} will be the following:
    \begin{itemize}
        \item[(i)] Show that $\boldsymbol{\kappa}(n)_{t,m,s}\in H^1_{\mathcal{F}_{\Gr}(n)}(K,\T_{t,m,s})$ for all $n\in\mathcal{N}_{m,s}$.
        \item[(ii)] Find an explicit relation between $\loc_\gl \boldsymbol{\kappa}(n\ell)_{t,m,s}$ and $\loc_\gl  \boldsymbol{\kappa}(n)_{t,m,s}$ for every $n\ell\in\mathcal{N}_{m,s}'$, where $\gl$ is the prime of $K$ above $\ell$ and $\loc_\gl$ is the localization map at $\gl$ in cohomology.
        \item[(iii)] Define $\boldsymbol{\gk}^{\ac}(n)_{t,m,s}:=\boldsymbol{\gk}(n)_{t,m,s}\otimes\bigotimes_{\ell'\mid n}\gs_{\ell'}\in H^1_{\mathcal{F}_{\Gr}}(K,\T_{t,m,s})\otimes\mathcal{G}(n)$ and show that these classes interpolate into a modified universal Kolyvagin system with the required properties.
    \end{itemize}
\end{remark}

\subsection{Local properties of the classes}

Before studying the local properties of the classes $\boldsymbol{\kappa}(n)_{t,m,s}$, we set some notation. If $L/F$ is a Galois extension of number fields, $v$ is a prime of $F$ and $T$ is a $G_F$-module, we write
\begin{equation*}
    H^1(L_v,T):=\bigoplus_{w\mid v} H^1(L_w,T)\quad\text{and}\quad H^1(L_v^{\ur}, T):=\bigoplus_{w\mid v} H^1(L_w^{\ur},T),
\end{equation*}
where $f$ is the residue degree of the extension $L/K$ at $v$. We will also denote by
\begin{equation*}
    \loc_v:=\bigoplus_{w\mid v}\loc_w\colon H^1(L,T)\to H^1(L_v,T)
\end{equation*}
for the corresponding semi-localization map. We will often use the fact that semi-localization commutes with restriction, corestriction, Galois action and Shapiro's map. For the first three, see \cite[§1.5]{Neukirch-Schmidt-Wingberg:cohomology-of-number-fields}. For the behaviour with respect to Shapiro's map, see \cite[§3.1.2]{Skinner-Urban:Iwasawa.main-conjecture-for-GL2}.

\begin{proposition}\label{prop:z-in-the-Selmer}
	If  $m,s\in\Z_{>0}$, $t\ge 0$ and $n\in\mathcal{N}_{m,s}$ then $\mathfrak{z}(n)_{t}\in H^1_{\mathcal{F}_{\Gr}}(K_t[n],\T^{\dag})$.
\end{proposition}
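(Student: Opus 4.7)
The plan is to reduce the statement to the unmodified big Heegner class and then verify the Greenberg conditions place by place. Since $U_p^{-t}$ acts as an $\calR$-linear automorphism and therefore preserves any Selmer structure, and since semi-local corestriction is compatible with the Greenberg local conditions (both the unramified and the strict $F_v^+$-kernel conditions are defined by maps that commute with corestriction), it suffices to prove that $\kappa_c \in H^1_{\mathcal{F}_{\Gr}}(K[c], \T^\dag)$ for every positive integer $c$ coprime with $N$. This in turn is a local verification at each prime $w$ of $K[c]$ lying above a prime $v$ of $K$.

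At a prime $w$ above $v \nmid Np$, the representation $\T^\dag$ is unramified at $v$ by Proposition \ref{prop:characteristic-polynomial-Frobenius}; moreover $\kappa_c$ is constructed via Kummer-type maps applied to CM points on the Shimura curves of level $\Gamma_0(N)\cap \Gamma_1(p^m)$, which have good reduction at $v$. A smooth and proper base change argument then forces $\loc_w(\kappa_c) \in H^1_{\ur}(K[c]_w, \T^\dag)$. At a prime $w$ above $v \mid p$, the CM points sit in the ordinary locus of the Shimura curve, so that the unit-root splitting of their $p$-adic Tate module matches the Galois-theoretic filtration \eqref{eq:exact-sequence-at-p}; consequently $\loc_w(\kappa_c)$ factors through $H^1(K[c]_w, F_v^+(\T^\dag))$, and its image in $H^1(K[c]_w, F_v^-(\T^\dag))$ vanishes. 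This is exactly the strict Greenberg condition at $w$.

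The delicate case is $v \mid N$, where the Shimura curves may have bad reduction and the smooth proper base change argument is unavailable. Here I would lean on Assumption \ref{ass:tamagawa-factors-at-N} together with Proposition \ref{prop:tamagawa-numbers-at-N}. The freeness of $H^1(\mathfrak{I}_v, \T^\dag)$ over $\calR$ combined with torsion-freeness of $H^1(\mathfrak{I}_v, \T^\dag\otimes_\calR S)$ under specializations $\calR \to S$ reduces the vanishing of the singular part of $\loc_w(\kappa_c)$ to the analogous statement after specialization at a suitably generic arithmetic prime $\mathfrak{p}\subset\calR$. After specialization, $\kappa_c$ becomes (up to a unit) the classical Heegner class attached to the specialized eigenform $f_\mathfrak{p}$, and its unramifiedness at primes dividing $N$ is a classical statement (see \cite{Nekovar92:chow-groups} and \cite{Longo-Vigni:kolyvagin-systems-Heegner-cycles} in the quaternionic setting).

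The main obstacle is precisely this last step: in the absence of good reduction of the Shimura curves at primes dividing $N$, one must use Assumption \ref{ass:tamagawa-factors-at-N} as a structural substitute for smooth proper base change, and then pass through arithmetic specializations to appeal to the known Heegner theory for individual cusp forms in the family. All other local checks are essentially formal, once the construction of $\kappa_c$ is unpacked.
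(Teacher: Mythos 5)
Your overall strategy matches the paper's: reduce to showing $\kappa_{np^{t+1}}\in H^1_{\mathcal{F}_{\Gr}}(K[np^{t+1}],\T^\dag)$ (compatibility of the Greenberg conditions with $U_p^{-t}$ and corestriction), then verify locally. For $v\nmid Np$ and $v\mid p$, your geometric sketches (good reduction plus smooth proper base change; ordinary locus and unit-root splitting) are an unpacking of what the paper simply cites from \cite[Proposition 10.1]{Longo-Vigni11:quaternion-algebras-Hida-families} and \cite[Proposition 2.4.5]{Howard07:variation-of-Heegner-points-in-Hida-families}, and this is fine in spirit.

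The genuine gap is your treatment of $v\mid N$, specifically at the primes dividing $N^-$. You assert that ``unramifiedness at primes dividing $N$ is a classical statement'' for specialized Heegner classes, but this is not unconditionally true: at a prime of $N^-$ the quaternionic Shimura curve has bad reduction, and even for an individual cusp form the singular image of the Heegner class lands a priori only in the $p$-torsion of $H^1(\mathfrak{I}_v, T_g)$; it vanishes only when the $p$-part of the Tamagawa factor at $v$ is trivial, which is itself a Tamagawa-type hypothesis. Your detour through a generic arithmetic specialization $\mathfrak{s}\colon\calR\to S$ does not avoid this — it relocates it — and in addition quietly relies on the interpolation property linking the big Heegner class to the specialized one, which is a substantial input that you do not justify. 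The paper's route is more direct and avoids both issues: \cite[Proposition 10.1]{Longo-Vigni11:quaternion-algebras-Hida-families} shows that the image of $\loc_w\kappa_{np^{t+1}}$ in $H^1(K[np^{t+1}]_w^{\ur},\T^\dag)$ is $\calR$-\emph{torsion} for $w\mid N^-$; Assumption \ref{ass:tamagawa-factors-at-N} then gives that $H^1(K_v^{\ur},\T^\dag)=H^1(\mathfrak{I}_v,\T^\dag)$ is $\calR$-torsion-free; hence the singular image is zero. No specialization is needed, and the place where the hypothesis does the work is completely explicit. You should replace your specialization argument with this torsion--versus--torsion-free contradiction, and you should split the case $v\mid N$ into $v\mid N^+$ (covered by the $w\nmid N^-$ part of Longo--Vigni's result) and $v\mid N^-$ (where the torsion argument is actually needed), rather than treating $v\mid N$ uniformly.
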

\begin{proof}
    Following the proof of \cite[Proposition 10.1]{Longo-Vigni11:quaternion-algebras-Hida-families} (that in turn relies on \cite[Proposition 2.4.5]{Howard07:variation-of-Heegner-points-in-Hida-families}), one shows that $\loc_w\kappa_{np^{t}}\in H^1_{\mathcal{F}_{\Gr}}(K[np^{t}]_w,\T^\dag)$ for every prime $w\nmid N^-$ of $K[np^{t}]$. When $w\mid N^-$, one is able to show that the restriction of $\loc_w\kappa_{np^{t}}$ to $H^1(K[np^{t}]_w^{\ur},\T^\dag)$ is $\mathcal{R}$-torsion. By class field theory, $K_{v}=K[np^{t}]_w$ where $v=w\cap K$. By Assumption \ref{ass:tamagawa-factors-at-N}, we know that $H^1(K_{v}^{\ur},\T^\dag)$ is $\mathcal{R}$-torsion-free, hence the restriction of $\loc_w\kappa_{np^{t}}$ to it is zero. Therefore,
    \begin{equation}\label{eq:LV-classes-in-the-Selmer}
        \kappa_{np^t}\in H^1_{\mathcal{F}_{\Gr}}(K[np^{t}],\T^{\dag}).
    \end{equation}
		Fix now a prime $v\nmid p$ of $K_t[n]$. A careful study of restriction and corestriction maps based on \cite[Proposition 1.5.6]{Neukirch-Schmidt-Wingberg:cohomology-of-number-fields} (see also \cite[§B.2]{mastella-zerman2025:anticyclotomic}) yields the commutative diagram
		\begin{equation*}
        \begin{tikzcd}
	{H^1(K[np^{t+1}],\T^{\dag})} & {H^1(K[np^{t+1}]_v,\T^{\dag})} & {H^1(K[np^{t+1}]_v^{\ur},\T^{\dag})} \\
	{H^1(K_t[n],\T^{\dag})} & {H^1(K_t[n]_v,\T^{\dag})} & {H^1(K_t[n]_v^{\ur},\T^{\dag}).}
	\arrow["{U_p^{-t}\circ\Cor}"', from=1-1, to=2-1]
	\arrow["{\loc_v}", from=1-1, to=1-2]
	\arrow["{\loc_v}", from=2-1, to=2-2]
	\arrow[from=1-2, to=2-2]
	\arrow["\Res", from=1-2, to=1-3]
	\arrow[from=1-3, to=2-3]
	\arrow["\Res", from=2-2, to=2-3]
        \end{tikzcd}
            \end{equation*}
		Using \eqref{eq:LV-classes-in-the-Selmer} and chasing the above diagram, we conclude that $\loc_v(\mathfrak{z}(n)_{t})\in H^1_{\mathcal{F}_{\Gr}}(K_t[n]_v,\T^{\dag})$.
		
    Similarly, if $v\mid p$, the commutative diagram
		\begin{equation*}
		\begin{tikzcd}
	{H^1(K[np^{t+1}],\T^\dag)} & {H^1(K[np^{t+1}]_v,\T^{\dag})} & {H^1(K[np^{t+1}]_v,F^-_p(\T^{\dag}))} \\
	{H^1(K_t[n],\T^{\dag})} & {H^1(K_t[n]_v,\T^{\dag})} & {H^1(K_t[n]_v,F^-_p(\T^{\dag}))}
	\arrow["{\loc_v}", from=1-1, to=1-2]
	\arrow["{U_p^{-t}\circ\Cor}"', from=1-1, to=2-1]
	\arrow[from=1-2, to=1-3]
	\arrow[from=1-2, to=2-2]
	\arrow[from=1-3, to=2-3]
	\arrow["{\loc_v}", from=2-1, to=2-2]
	\arrow[from=2-2, to=2-3]
            \end{tikzcd}
		\end{equation*}
		together with \eqref{eq:LV-classes-in-the-Selmer} yields that $\loc_v(\mathfrak{z}(n)_{t})\in H^1_{\mathcal{F}_{\Gr}}(K_t[n]_v,\T^{\dag})$.
\end{proof}

\begin{theorem}\label{thm:kolyvagin-classes-lie-in-the-Selmer}
    Let $m,s\in\Z_{>0}$, $t\ge0$ and $n\in\mathcal{N}_{m,s}$. Then $\boldsymbol{\kappa}(n)_{t,m,s}\in H^1_{\mathcal{F}_{\Gr}(n)}(K,\T_{t,m,s})$.
\end{theorem}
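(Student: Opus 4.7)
The plan is to propagate the membership $\mathfrak{z}(n)_t\in H^1_{\mathcal{F}_{\Gr}}(K_t[n],\T^\dag)$ proved in Proposition~\ref{prop:z-in-the-Selmer} through each of the operations that produce $\boldsymbol{\kappa}(n)_{t,m,s}$: reduction modulo $(\omega_{2,m},p^s)$, the derivative $D_n$, the inverse of the restriction isomorphism of Proposition~\ref{prop:first-descent-results}(c), corestriction from $K_t[1]$ to $K_t$, and Shapiro's isomorphism $\sh_t$. Throughout, semi-localization commutes with each of these operations (by the references recalled just before Proposition~\ref{prop:z-in-the-Selmer} and by Lemma~\ref{lem:Shapiro-application}), and Lemma~\ref{lem:easier-description-Greenberg-condition} identifies the propagation of $\mathcal{F}_{\Gr}$ from $\T^\dag$ to $\T_{m,s}$ (resp.\ to $\T_{t,m,s}$) with the explicit Greenberg local conditions on these quotients, so I can work interchangeably with the propagated and the explicit versions of $\mathcal{F}_{\Gr}$.

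For a prime $v$ of $K$ with $v\nmid np$, fix a prime $w\mid v$ of $K_t[n]$. Assumption~\ref{ass:field-K} ensures that $v$ is unramified in $K_t[n]/K$; the Galois action of $\Z[\mathcal{G}_n]$, the inverse of $\res$ on the $\mathcal{G}_n$-invariants, and corestriction all preserve the unramified subgroup of local cohomology. Hence $\loc_v\boldsymbol{\kappa}'(n)_{t,m,s}$ lands in $H^1_\ur$, and applying $\sh_t$ together with Lemma~\ref{lem:easier-description-Greenberg-condition} yields $\loc_v\boldsymbol{\kappa}(n)_{t,m,s}\in H^1_{\mathcal{F}_{\Gr}(n)}(K_v,\T_{t,m,s})$. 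The case $v\mid p$ is a verbatim rerun of this diagram chase, with ``unramified'' replaced everywhere by ``kernel of the map to $H^1(-,F_v^-)$'' and with the diagram analogous to the second commutative square displayed in the proof of Proposition~\ref{prop:z-in-the-Selmer}.

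The substantive part of the theorem is the transverse condition at primes $\gl$ of $K$ above $\ell\mid n$. Since $\T_{t,m,s}$ is unramified at $\gl$, Remark~\ref{rk:transverse-condition} reduces the claim to showing that $\loc_\gl\boldsymbol{\kappa}(n)_{t,m,s}$ dies upon restriction to $H^1(K[\ell]_{\gl'},\T_{t,m,s})$ for any prime $\gl'$ of $K[\ell]$ above $\gl$. I would exploit the functorial splitting $H^1(K_\gl,\T_{t,m,s})=H^1_\ur\oplus H^1_\tr$ recalled in Section~\ref{sec:modified-Kolyvagin-systems}, combined with the derivative identity \eqref{eq:formula-derivative-operators} and the fact that $\ell$ is totally ramified of degree $\ell+1$ in $K[n]/K[n/\ell]$, to show that $D_n$ annihilates the unramified component of the semi-localization. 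Concretely, if $\xi$ is a local cocycle representing $\loc_\gl\mathfrak{z}(n)_{t,m,s}$ (which is unramified at each prime of $K_t[n]$ above $\gl$), then applying $(\sigma_\ell-1)D_\ell=\ell+1-\Tr_{\mathcal{G}_\ell}$ and using Lemma~\ref{lem:Tl-acts-as-0} (giving $\ell+1\equiv 0$ in $\calR_{m,s}$) together with Proposition~\ref{prop:first-descent-results}(a) forces $\loc_\gl D_n\mathfrak{z}(n)_{t,m,s}$ to lie in $H^1_\tr$. Corestriction and Shapiro then transport this transverse property to $\loc_\gl\boldsymbol{\kappa}(n)_{t,m,s}$.

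The main obstacle is the bookkeeping in this last step: tracking the inflation--restriction sequences relating cohomology over $K_\gl$, $K[\ell]_{\gl'}$, and the primes of $K_t[n]$ above $\gl$, and matching them carefully with the $\mathcal{G}_\ell$-action defining $H^1_\tr$ via the Kolyvagin derivative. I would follow the template of \cite[Proposition~1.7.3]{Howard04:heegner-point-kolyvagin-system} adapted to the big-coefficient quaternionic setting, in parallel with the analogous step in \cite[Theorem~4.13]{Buyukboduk:big-Heegner-point-kolyvagin-system}.
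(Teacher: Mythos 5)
Your treatment of the primes $v\nmid np$ and $v\mid n$ is essentially the paper's argument and is sound: away from $np$ the extension $K_t[n]_{v_n}/K_v$ is unramified so the maximal unramified extensions coincide and $(\res)^{-1}$ preserves $H^1_\ur$; and at $v\mid n$ the combination of unramifiedness of $\loc\mathfrak{z}(n)$, the identity \eqref{eq:formula-derivative-operators}, $\ell+1\equiv 0$ in $\calR_{m,s}$ and Remark~\ref{rk:transverse-condition} is exactly the route the paper takes (the paper evaluates $D_\ell$ on $\Fr_{v_n}$ to produce the factor $\ell(\ell+1)/2$, but this is the same mechanism).

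The case $v\mid p$, however, is \emph{not} a verbatim rerun, and treating it as such is a genuine gap. The reason cases (i) and (ii) go through under $(\res_{K_t[1]}^{K_t[n]})^{-1}$ is that the relevant local condition can be tested on a fixed big extension (e.g.\ $K_v^{\ur}$, the same for $K_t[1]_{v_1}$ and $K_t[n]_{v_n}$), so membership descends trivially along restriction. At $v\mid p$ the Greenberg condition is $\ker\bigl(H^1(L_w,\T_{m,s})\to H^1(L_w,F_v^-(\T_{m,s}))\bigr)$, and the restriction map
\[
H^1\bigl(K_t[1]_{v_1},F_v^-(\T_{m,s})\bigr)\longrightarrow H^1\bigl(K_t[n]_{v_n},F_v^-(\T_{m,s})\bigr)
\]
is not injective: by inflation--restriction its kernel is $H^1\bigl(\Gal(K_t[n]_{v_n}/K_t[1]_{v_1}),H^0(K_t[n]_{v_n},F_v^-(\T_{m,s}))\bigr)$, and this local extension is a genuine unramified extension of the totally ramified (at $p$) field $K_t[1]_{v_1}$. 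So knowing that $D_n\mathfrak{z}(n)_{t,m,s}$ dies in $H^1(K_t[n]_{v_n},F_v^-(\T_{m,s}))$ does \emph{not} tell you that $\tilde{\boldsymbol\gk}(n)_{t,m,s}=(\res)^{-1}D_n\mathfrak{z}(n)_{t,m,s}$ dies in $H^1(K_t[1]_{v_1},F_v^-(\T_{m,s}))$. The diagram you invoke from Proposition~\ref{prop:z-in-the-Selmer} is for corestriction, which commutes cleanly with the map to $F_v^-$; it does not help with the inverse restriction. The paper closes this gap by an Iwasawa-theoretic argument: it produces a corestriction-compatible system of obstruction classes $c(n)_{\ga,m,s}\in M_\ga$ up the anticyclotomic tower ($\ga\ge t$), identifies $M_\ga$ via inflation--restriction, observes that once the modules $H^0(K_\ga[n]_{v_n(\ga)},F_v^-(\T_{m,s}))$ stabilize the transition map is multiplication by $p$ on a finite module, and concludes that for $\tilde\ga\gg t$ the map $M_{\tilde\ga}\to M_t$ vanishes, forcing $c(n)_{t,m,s}=0$. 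You would need to supply an argument of this kind (or an equivalent one) before the $v\mid p$ case of the theorem can be considered proved.
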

\begin{proof}
    We have to show that $\loc_v\boldsymbol{\kappa}(n)_{t,m,s}\in H^1_{\mathcal{F}_{\Gr}(n)}(K_v,\T_{t,m,s})$ for every prime $v$ of $K$. Let $v_n$ be a prime of $K_t[n]$ above $v$ and call $v_1=v_n\cap K_t[1]$.

    (i) Suppose that $v\nmid np$, so that $H^1_{\mathcal{F}_{\Gr}(n)}(K_v,\T_{t,m,s})=H^1_{\ur}(K_v,\T_{t,m,s})$ by Lemma \ref{lem:easier-description-Greenberg-condition}. Starting from Proposition \ref{prop:z-in-the-Selmer}, the functoriality of $D_n$, restriction, corestriction (see \cite[§1.5]{Neukirch-Schmidt-Wingberg:cohomology-of-number-fields}) and Shapiro's lemma (see \cite[§3.1.2]{Skinner-Urban:Iwasawa.main-conjecture-for-GL2}) in (semi-)local Galois cohomology together with the fact that $K_v^{\ur}=K_t[1]_{v_1}^{\ur}=K_t[n]_{v_n}^{\ur}$ easily imply that $\loc_v\boldsymbol{\kappa}(n)_{t,m,s}\in H^1_{\ur}(K_v,\T_{t,m,s})$.

    (ii) Suppose that $v\mid n$, so that $H^1_{\mathcal{F}_{\Gr}(n)}(K_v,\T_{t,m,s})=H^1_{\tr}(K_v,\T_{t,m,s})$. Let $\ell$ be the rational prime below $v$ and set $v_\ell=v_n\cap K_t[\ell]$. By Proposition \ref{prop:z-in-the-Selmer} and the functoriality of $D_n$, we obtain that $\loc_{v_n}(D_n\mathfrak{z}(n)_{t,m,s})\in H^1_{\ur}(K_t[n]_{v_n},\T_{m,s})$. One can lift the fixed generator $\sigma_\ell$ of $\mathcal{G}_\ell$ to an element of $G_{K_t[1]_{v_1}^{\ur}}$, that acts trivially on $\T_{m,s}$ since $\T_{m,s}$ is unramified outside $Np$. Then, using the fact that $\loc_{v_n}\mathfrak{z}(n)_{t,m,s}$ is inflated by an unramified cocycle (see Lemma \ref{lem:app-unramified-cocycles}), we have that
    \begin{equation*}
       (D_\ell \mathfrak{z}(n)_{t,m,s})(\Fr_{v_n})=\sum_{i=1}^\ell i(\sigma_\ell \mathfrak{z}(n)_{t,m,s})(\Fr_{v_n})=\sum_{i=1}^\ell i \mathfrak{z}(n)_{t,m,s}(\Fr_{v_n})=\frac{\ell(\ell+1)}{2} \mathfrak{z}(n)_{t,m,s}(\Fr_{v_n})
   \end{equation*}
     is trivial, since $\ell+1$ is zero in $\calR_{m,s}$. By \cite[Lemma 1.2.1]{Mazur-Rubin:Kolyvagin-systems}, the evaluation at $\Fr_{v_n}$ induces an isomorphism between $H^1_{\ur}(K_t[n]_{v_n},\T_{m,s})$ and $\T_{m,s}$, therefore we have shown that
    \begin{equation}\label{eq:step-1}
        \loc_{v_n}(D_n\mathfrak{z}(n)_{t,m,s})=0.
    \end{equation}
    By class field theory, the prime $v_\ell$ splits completely in $K_t[n]$, hence $K_t[n]_{v_n}=K_t[\ell]_{v_\ell}$. Then, the commutative diagram
\begin{equation*}
		\begin{tikzcd}
		{H^1(K_t[n],\T_{m,s})} && {H^1(K_t[n]_{v_n},\T_{m,s})} \\
		{H^1(K_t[1],\T_{m,s})} & {H^1(K_t[1]_{v_1},\T_{m,s})} & {H^1(K_t[\ell]_{v_\ell},\T_{m,s})}
		\arrow["{\loc_{v_n}}", from=1-1, to=1-3]
		\arrow["\res", from=2-1, to=1-1]
		\arrow["{\loc_{v_1}}", from=2-1, to=2-2]
		\arrow["\res", from=2-2, to=2-3]
		\arrow[Rightarrow, no head, from=2-3, to=1-3]
		\end{tikzcd}
		\end{equation*}
together with \eqref{eq:step-1} and the functoriality of corestriction and Shapiro's map in semi-local Galois cohomology yield that the restriction of $\loc_v\boldsymbol{\kappa}(n)_{t,m,s}$ to $H^1(K[\ell]_{v_\ell},\T_{t,m,s})$ is zero. Thanks to the discussion of Remark \ref{rk:transverse-condition}, we conclude that $\loc_v\boldsymbol{\kappa}(n)_{t,m,s}\in H^1_{\tr}(K_v,\T_{t,m,s})$.

(iii) Suppose that $v\mid p$, so that 
\begin{equation*}
    H^1_{\mathcal{F}_{\Gr}(n)}(K_v,\T_{t,m,s})=\ker\big(H^1(K_v,\T_{t,m,s})\to H^1(K_v,F_v^-(\T_{t,m,s}))\big)
\end{equation*}
by Lemma \ref{lem:easier-description-Greenberg-condition}. For every $\ga\ge t$, choose a place $v_n(\ga)$ above $v_n$ with the property that $v_n(\ga+1)\cap K_\ga[n]=v_n(\ga)$, and set $v_1(\ga)=v_n(\ga)\cap K_\ga[1]$. The functoriality of $D_n$ together with Proposition \ref{prop:z-in-the-Selmer} implies that the image of $D_n\mathfrak{z}(n)_{\ga}$ in $H^1(K_\ga[n]_{v_n(\ga)},F_v^-(\T_{m,s}))$ is zero. Define $\tilde{\boldsymbol{\gk}}(n)_{\ga,m,s}:=(\res^{K_\ga[n]}_{K_\ga[1]})^{-1}D_n\mathfrak{z}(n)_{\ga,m,s}\in H^1(K_\ga[1],\T_{m,s})$ and call $c(n)_{\ga,m,s}$ the image of $\loc_{v_1(\ga)}\tilde{\boldsymbol{\gk}}(n)_{\ga,m,s}$ in $H^1(K_\ga[1]_{v_1(\ga)},F_v^-(\T_{m,s}))$. Then, we know that $c(n)_{\ga,m,s}$ lies in
\begin{equation*}
    M_\ga:=\ker\Big( H^1(K_\ga[1]_{v_1(\ga)},F_v^-(\T_{m,s}))\overset{\res}{\longrightarrow} H^1(K_\ga[n]_{v_n(\ga)},F_v^-(\T_{m,s}))\Big).
\end{equation*}

As a consequence of the norm compatibilities of Lemma \ref{lem:compatibility-of-z}, one can show that
\begin{equation*}
    \Cor^{K_{\ga+1}[1]_{v_1(\ga+1)}}_{K_\ga[1]_{v_1(\ga)}} c(n)_{\ga+1,m,s}=c(n)_{\ga,m,s}
\end{equation*}
for every $\ga\ge t$. By applying the inflation--restriction exact sequence, we obtain that $M_\ga$ is isomorphic to
\begin{equation*}
    N_\ga:=H^1\big(K_\ga[n]_{v_n(\ga)}/K_\ga[1]_{v_1(\ga)},H^0(K_\ga[n]_{v_n(\ga)},F_v^-(\T_{m,s})\big)
\end{equation*}
and that the corestriction map $M_{\ga+1}\to M_\ga$ corresponds to the map
\begin{equation}\label{eq:explicit-corestriction-inflation}
    [\xi]\mapsto [\Tr_{K_{\ga+1}[n]_{v_n(\ga+1)}/K_{\ga}[n]_{v_n(\ga)}}\circ \xi\circ i_{\ga}^{-1}]
\end{equation}
on cocycle classes representing elements of $N_{\ga+1}$, where $i_\ga$ is the natural isomorphism 
\begin{equation*}
    i_\ga\colon \Gal(K_{\ga+1}[n]_{v_n(\ga+1)}/K_{\ga+1}[1]_{v_1(\ga+1)})\to\Gal(K_{\ga}[n]_{v_n(\ga)}/K_{\ga}[1]_{v_1(\ga)}).
\end{equation*}
Since the module $\T_{m,s}$ is finite, the cardinality of the modules $H^0(K_\ga[n]_{v_n(\ga)},F_v^-(\T_{m,s}))$ is bounded independently on $\ga$, hence there is $\ga_0\ge t$ such that these modules stabilize for all $\ga\ge\ga_0$. For such $\ga$, the trace map appearing in \eqref{eq:explicit-corestriction-inflation} coincides with multiplication by $p$. Therefore, there is a big enough $\tilde{\ga}$ such that
\begin{equation*}
    \Cor^{K_{\tilde{\ga}}[1]_{v_1(\tilde{\ga})}}_{K_t[1]_{v_1}}\colon M_{\tilde{\ga}}\to M_t
\end{equation*}
is the zero map. Since $c(n)_{t,m,s}$ lies in the image of this map, it is trivial. This means that $\loc_{v_1}\tilde{\boldsymbol{\gk}}(n)_{\ga,m,s}$ lies in
\begin{equation*}
    \ker\big(H^1(K_t[1]_{v_1},\T_{m,s})\to H^1(K_t[1]_{v_1},F_v^{-}(\T_{m,s})\big).
\end{equation*}
One concludes by applying the functoriality of corestriction and Shapiro's map (see \cite[§3.1.2]{Skinner-Urban:Iwasawa.main-conjecture-for-GL2}) in (semi-)local Galois cohomology.
\end{proof}

\subsection{The key relation}\label{sec:the-key-relation}

Recall now the morphisms $\ga_\gl$ and $\beta_\gl$ defined in \eqref{eq:definition-finite-singular}. The main step needed to end the proof of Theorem \ref{thm:main-thm} is the following relation, whose proof is inspired by the arguments of \cite[§7-9]{Nekovar92:chow-groups} and \cite[§1.7]{Howard04:heegner-point-kolyvagin-system}. For every class $\boldsymbol{x}\in H^1(K_\gl,\T_{t,m,s})$, we indicate by $[\boldsymbol{x}]_{\s}$ its image in the singular quotient $H^1_\s(K_\gl,\T_{t,m,s})$.

\begin{lemma}\label{lem:key-formula}
    Let $m,s\in\Z_{>0}$ and $t\ge 0$. For any $n\ell\in\mathcal{N}_{m,s}'$ with $\ell$ prime there is $\vartheta_\ell\in\Aut(\T_{t,m,s})$ such that
    \begin{equation*}
        \ga_\gl\big(\loc_\gl \boldsymbol{\gk}(n)_{t,m,s}\big)=\vartheta_\ell\Big(\beta_\gl\big([\loc_\gl \boldsymbol{\gk}(n\ell)_{t,m,s}]_{\s}\otimes\sigma_\ell\big)\Big)
    \end{equation*}
    as elements of $\T_{t,m,s}$, where $\gl$ is the prime of $K$ above $\ell$.
\end{lemma}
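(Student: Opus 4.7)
The plan is to reduce the identity to a cocycle-level computation in the local cohomology at $\lambda$ and then exploit the Eichler--Shimura relation of Proposition \ref{prop:Eichler-Shimura-relation-big-Heegner-points} together with the refined divisibility afforded by the set $\mathcal{P}'$ (Corollary \ref{cor:finer-choice-of-primes}).

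First, I would fix a compatible tower of primes $v_{n\ell}\mid v_n\mid v_1\mid\lambda$ in the extension $K_t[n\ell]\supset K_t[n]\supset K_t[1]\supset K$. By the functoriality of Shapiro's map, corestriction and (the inverse of) restriction with respect to (semi-)localization, the statement is equivalent to an analogous local relation between the classes $\loc_{v_1}(\res)^{-1}D_n\mathfrak{z}(n)_{t,m,s}$ and $\loc_{v_1}(\res)^{-1}D_{n\ell}\mathfrak{z}(n\ell)_{t,m,s}$ in $H^1(K_t[1]_{v_1},\T_{m,s})$. Since $\lambda$ is unramified in $K_t[n]/K$ but totally tamely ramified in $K_t[n\ell]/K_t[n]$ of degree $\ell+1$, this reduces the problem to a standard local Kolyvagin-type computation.

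Next, I would apply the Eichler--Shimura relation (Proposition \ref{prop:Eichler-Shimura-relation-big-Heegner-points}) to identify $\loc_{v_{n\ell}}\mathfrak{z}(n\ell)_{t,m,s}$ with $\Fr_{v_n/\ell}(\loc_{v_n}\mathfrak{z}(n)_{t,m,s})$ in $H^1(K_t[n\ell]_{v_{n\ell}},\T_{m,s})$. Combined with the derivative operator identity $(\sigma_\ell-1)D_\ell=\ell+1-\Tr_{\mathcal{G}_\ell}$ of \eqref{eq:formula-derivative-operators}, the norm compatibility $\Cor^{K_t[n\ell]}_{K_t[n]}\mathfrak{z}(n\ell)=T_\ell\,\mathfrak{z}(n)$ of Lemma \ref{lem:compatibility-of-z}, and the fact that $\loc_{v_n}\mathfrak{z}(n)_{t,m,s}$ is represented by an unramified cocycle (cf.\ Theorem \ref{thm:kolyvagin-classes-lie-in-the-Selmer}(i)), this will let me express the value of a cocycle representative of $D_{n\ell}\mathfrak{z}(n\ell)_{t,m,s}$ at a lift $\tilde\sigma_\ell$ of $\sigma_\ell$ in terms of the value of a cocycle representative of $D_n\mathfrak{z}(n)_{t,m,s}$ at $\Fr_\lambda$. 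The intermediate coefficient will have the shape $(\ell+1)\Fr_\lambda\pm T_\ell$, the sign being determined by the eigenspace decomposition of $\T_{m,s}$ under $\tau_c$ (cf.\ the proof of Corollary \ref{cor:finer-choice-of-primes} and Proposition \ref{prop:characteristic-polynomial-Frobenius}).

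Finally, by Corollary \ref{cor:finer-choice-of-primes} the operator $\frac{(\ell+1)\Fr_\lambda\pm T_\ell}{p^{s(\ell)}}$ is an automorphism of $\T_{m,s}$; lifting it to $\T_{t,m,s}$ through the natural projection $\calR^\ac\twoheadrightarrow\calR_{t,m,s}$ produces the desired $\vartheta_\ell$. Combining the above steps with the explicit descriptions of $\alpha_\lambda$ as evaluation at $\Fr_\lambda$ and of $\beta_\lambda$ as evaluation at a lift of $\sigma_\ell$ yields the claimed identity. The hardest part will be the cocycle-level computation of the second step, namely tracking the passage between the unramified class $\loc_\lambda\boldsymbol{\gk}(n)_{t,m,s}$ and the transverse class $\loc_\lambda\boldsymbol{\gk}(n\ell)_{t,m,s}$ through the tamely ramified local extension $K_t[n\ell]_{v_{n\ell}}/K_t[n]_{v_n}$, in the spirit of \cite[\S1.7]{Howard04:heegner-point-kolyvagin-system} and \cite[\S6]{Buyukboduk:big-Heegner-point-kolyvagin-system}, while keeping careful track of the anticyclotomic variable and of the twist by the critical character $\Theta$ occurring in the characteristic polynomial of $\Fr_\ell$ on $\T^\dag$.
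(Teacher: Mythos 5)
Your plan follows the same path as the paper's proof: reduce to a cocycle-level identity at a prime above $\lambda$ via functoriality of Shapiro's map and (co)restriction, apply the Eichler--Shimura relation (Proposition \ref{prop:Eichler-Shimura-relation-big-Heegner-points}), the derivative operator identity \eqref{eq:formula-derivative-operators}, the norm compatibility of Lemma \ref{lem:compatibility-of-z}, and the unramified cocycle description, and then invoke Corollary \ref{cor:finer-choice-of-primes} to invert the coefficient divided by $p^{s(\ell)}$. What your sketch leaves implicit, and what is in fact the heart of the argument, is the technical device that makes the division by $p^{s(\ell)}$ legitimate. One cannot divide inside the finite modules $\T_{m,s}$ or $\T_{m,s(\ell)}$. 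The paper first establishes the relation
\begin{equation*}
(\ell+1)a_y - T_\ell a_x = \Big((-1)^{\frac{k+j}{2}-1}T_\ell\Fr_\ell - (\ell+1)\Big)\big(a - p^{s(\ell)}c\big)
\end{equation*}
inside $\T_m$, which is a free (hence torsion-free) $\calR_m$-module since $\calR_m$ is free over $\OF$; only there can both sides be divided by $p^{s(\ell)}$, and only afterwards does one reduce to $\T_{m,s(\ell)}$ and then to $\T_{m,s}$.

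In particular, the element $\bar a$ witnessing the vanishing of $\Cor_{K_t[n]}^{K_t[n\ell]}\bar{\boldsymbol{y}}$ modulo $p^{s(\ell)}$ (Proposition \ref{prop:first-descent-results}(a)) must be lifted, via Lemma \ref{lem:compatibility-of-z}(b), to an $a\in\T_m$ satisfying $(\Cor y)(g) - T_\ell x(g)=(g-1)a$; the extra term $T_\ell x(g)$ vanishes only after reduction, and it is precisely what produces the coefficient $(\ell+1)\Fr_\ell - T_\ell$ on the $\ga_\gl$-side. Finally, a small but substantive correction: the operator of Corollary \ref{cor:finer-choice-of-primes} is $\frac{(\ell+1)\Fr_\ell\pm T_\ell}{p^{s(\ell)}}$ with $\Fr_\ell$, not $\Fr_\gl=\Fr_\ell^2$. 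Since $\Fr_\gl$ acts trivially on $\T_{m,s}$ while $\Fr_\ell$ has eigenvalues $\pm 1$, writing $\Fr_\lambda$ as you do would collapse both coefficients to $(\ell+1)\pm T_\ell$ and lose the distinction between the $\ga_\gl$- and $\gb_\gl$-sides of the identity.
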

\begin{proof}
    Recall that, since $\ell\in\mathcal{P}_{m,s}'$, there is an index $s(\ell)\ge s$ such that $s(\ell)\ge m$ and $\ell\in \mathcal{L}_{m,s(\ell)}$. To ease the notation, let $\boldsymbol{x}$ and $\bar{\boldsymbol{x}}$ be the image of $D_n \mathfrak{z}(n)_{t}$ in $H^1(K_t[n],\T_m)$ and $H^1(K_t[n],\T_{m,s(\ell)})$, respectively, and let $\boldsymbol{y}$ and $\bar{\boldsymbol{y}}$ be the image of $D_n \mathfrak{z}(n\ell)_{t}$ in $H^1(K_t[n\ell],\T_m)$ and $H^1(K_t[n\ell],\T_{m,s(\ell)})$, respectively. We will also use the letters $x$, $\bar{x}$, $y$, $\bar{y}$ to denote cocycles representing $\boldsymbol{x}$, $\bar{\boldsymbol{x}}$, $\boldsymbol{y}$ and $\bar{\boldsymbol{y}}$, respectively.

    By point (a) of Proposition \ref{prop:first-descent-results}, we know that $\Cor_{K_t[n]}^{K_t[n\ell]}(\bar{\boldsymbol{y}})=0$. Therefore, there is an element $\bar{a}\in \T_{m,s(\ell)}$ such that
    \begin{equation*}
        \big(\Cor_{K_t[n]}^{K_t[n\ell]}(\bar{y})\big)(g)=(g-1)\bar{a}
    \end{equation*}
    for every $g\in\Gal(\bar{\Q}/K_t[n])$, where the corestriction map acts on cocycles via the explicit description of \cite[p. 48]{Neukirch-Schmidt-Wingberg:cohomology-of-number-fields}. The element $\bar{a}$ is uniquely determined by $\bar{y}$, since $H^0(K_t[n],\T_{m,s(\ell)})=\{0\}$ by the proof of point (c) of Proposition \ref{prop:first-descent-results}. The cohomology class $D_\ell \bar{\boldsymbol{y}}$ can be represented by the cocycle $D_\ell \bar{y}\in Z^1(K_t[n\ell],\T_{m,s(\ell)})$ defined as
    \begin{equation*}
        (D_\ell \bar{y})(h):=\sum_{i=1}^\ell i\tilde{\gs}_\ell^i\bar{y}(\tilde{\gs}_\ell^{-i}h\tilde{\gs}_\ell^i)
    \end{equation*}
    for every $h\in\Gal(\bar{\Q}/K_t[n\ell])$, where $\tilde{\gs}_\ell$ is a lift of the fixed generator $\sigma_\ell$ of $\mathcal{G}_\ell$ to $\Gal(\bar{\Q}/K_t[n])$. By points (b) and (c) of Proposition \ref{prop:first-descent-results}, we know that $D_\ell \bar{\boldsymbol{y}}$ lies in the image of the restriction map with domain $H^1(K_t[n],\T_{m,s(\ell)})$. At the level of cocycles, in \cite[§7]{Nekovar92:chow-groups} Nekov\' a\v r showed that there is a cocycle $f_{\bar{y}}\in Z^1(K_t[n],\T_{m,s(\ell)})$ uniquely determined by the conditions
    \begin{equation}\label{eq:definition-fy}
        \res_{K_t[n]}^{K_t[n\ell]} f_{\bar{y}}=D_\ell\bar{y}\quad\text{and}\quad f_{\bar{y}}(\tilde{\gs}_\ell)=-\tilde{\gs}_\ell\bar{a}.
    \end{equation}
    Moreover, its cohomology class $[f_{\bar{y}}]\in H^1(K_t[n],\T_{m,s(\ell)})$ is independent on the choice of the representative $\bar{y}$ of $\bar{\boldsymbol{y}}$.

    Let now $\gl_{n\ell}$ be a prime of $K_t[n\ell]$ above $\gl$, and set $\gl_n:=\gl_{n\ell}\cap K_t[n]$. Notice that, by class field theory, $K_t[n]_{\gl_n}=K_\gl$ and $K_t[n\ell]_{\gl_{n\ell}}=K[\ell]_{\gl_\ell'}$, where $\gl_\ell':=\gl_{n\ell}\cap K[\ell]$. The action of $\Gal(\bar{\Q}_\ell/K_\gl)$ on $\T_{m,s(\ell)}$ is trivial as $\T_{m,s(\ell)}$ is unramified at $\ell$ and $\Fr_\gl$ acts trivially on $\T_{m,s(\ell)}$, therefore $H^1(K_\gl,\T_{m,s(\ell)})$ and $H^1(K[\ell]_{\gl_{\ell}'}/K_\gl,\T_{m,s(\ell)})$ are both homomorphism groups.
    
    The same argument used to prove \eqref{eq:step-1} implies that $\loc_{\gl_{n\ell}}(D_\ell\bar{\boldsymbol{y}})=0$. By applying the inflation--restriction exact sequence, we find a cohomology class $\bar{\boldsymbol{z}}_0\in H^1(K[\ell]_{\gl_{\ell}'}/K_\gl,\T_{m,s(\ell)})=\Hom(\mathcal{G}_\ell,\T_{m,s(\ell)})$ such that $\inf_{K_\gl}^{K[\ell]_{\gl_{\ell}'}}\bar{\boldsymbol{z}}_0=\loc_{\gl_n}f_{\bar{y}}$. Choose now a lift $\tilde{\gs}_\ell$ of $\sigma_\ell$ that lies in $\Gal(\bar{\Q}_\ell/\bar{\Q}_\ell^{\ur})$ (using the fixed embedding $\Q\hookrightarrow\Q_\ell$). Then
    \begin{equation}\label{eq:relation-z0-a}
        \bar{\boldsymbol{z}}_0(\sigma_\ell)=(\loc_{\gl_n}f_{\bar{y}})(\tilde{\gs}_\ell)=-\tilde{\gs}_\ell\bar{a}=-\bar{a},
    \end{equation}
    using the properties described in \eqref{eq:definition-fy} and the fact that $\T_{m,s(\ell)}$ is unramified at $\ell$.

    According to point (b) of Lemma \ref{lem:compatibility-of-z}, there is an element $a\in\T_m$ such that
    \begin{equation*}
        \big(\Cor_{K_t[n]}^{K_t[n\ell]}(y)\big)(g)-T_\ell x(g)=(g-1)a
    \end{equation*}
    for every $g\in\Gal(\bar{\Q}/K_t[n])$. When quotienting modulo $p^{s(\ell)}$, the term $T_\ell x(g)$ vanishes (see Lemma \ref{lem:Tl-acts-as-0}) and hence the reduction of $a$ to $\T_{m,s(\ell)}$ coincides with $\bar{a}$. Localizing and restricting to $g=h_0\in \Gal(\bar{\Q}_\ell/K_t[n\ell]_{\gl_{n\ell}})=\Gal(\bar{\Q}_\ell/K[\ell]_{\gl_\ell'})$, we obtain the relation
    \begin{equation*}
        \sum_{i=0}^{\ell} \tilde{\gs}_\ell^i y(\tilde{\gs}_\ell^{-i} h_0\tilde{\gs}_\ell^i)-T_\ell x(h_0)=(h_0-1)a
    \end{equation*}
    as elements of $\T_m$. Since $\loc_{\gl_n}(\boldsymbol{y})\in H^1_{\ur}(K[\ell]_{\gl_\ell'},\T_m)$ by Proposition \ref{prop:z-in-the-Selmer}, all the elements appearing in the above sum coincide (see Lemma \ref{lem:app-unramified-cocycles}). Therefore, the previous relation becomes
    \begin{equation*}
        (\ell+1) y(h_0)-T_\ell x(h_0)=(h_0-1)a.
    \end{equation*}
    Choose now $h_0\in \Gal(\bar{\Q}_\ell/K[\ell]_{\gl_\ell'})$ with the property that its image to $\Gal(\Q_\ell^{\ur}/K_\gl)\cong \Gal(K[\ell]_{\gl_\ell'}^{\ur}/K[\ell]_{\gl_\ell'})$ coincides with $\Fr_\gl$. Then, combining the last equation with \eqref{eq:explicit-description-unramified-cocycles}, we obtain the formula
    \begin{equation*}
        (\ell+1)a_y-T_\ell a_x=(\Fr_\gl-1)(a-(\ell+1)b_y+T_\ell b_x)
    \end{equation*}
    on $\T_m$, where $a_x$, $b_x$, $a_y$, $b_y$ are elements of $\T_m$ uniquely determined by the cocycles $x$ and $y$ (see Appendix \ref{app:unramified-Galois-cohomology}). In particular, the classes of $a_x$ and $a_y$ in $\T_m/(\Fr_\gl-1)\T_m$ respectively coincide with the images of the classes $\loc_{\gl_{n}}\boldsymbol{x}$ and $\loc_{\gl_{n\ell}}\boldsymbol{y}$ under the isomorphisms
    \begin{equation}\label{eq:diagram-unramified-cohomology}
        \begin{tikzcd}
	{H^1_{\ur}(K_t[n]_{\gl_{n}},\T_m)} & {H^1(K_\gl^{\ur}/K_\gl,\T_m)} & {\T_m/(\Fr_\gl-1)\T_m} \\
	{H^1_{\ur}(K_t[n\ell]_{\gl_{n\ell}},\T_m)} & {H^1(K[\ell]_{\gl_{\ell}'}^{\ur}/K[\ell]_{\gl_{\ell}'},\T_m)} & {\T_m/(\Fr_{\gl_\ell'}-1)\T_m,}
	\arrow["{\inf^{-1}}", from=1-1, to=1-2]
	\arrow["\cong", from=1-2, to=1-3]
	\arrow["{\inf^{-1}}", from=2-1, to=2-2]
	\arrow["\cong", from=2-2, to=2-3]
	\arrow["\cong", from=2-3, to=1-3]
\end{tikzcd}
    \end{equation}
    where the horizontal unnamed maps are induced by evaluating cocycles at the Frobenius (see \cite[Lemma B.2.8]{Rubin00:euler-systems}). Since $s(\ell)\ge m$, we can apply the description of the action of the Frobenius on $\T_m$ observed in Lemma \ref{lem:charachteristic-polynomial-frobenius-Tm} to obtain
    \begin{equation}\label{eq:relation-middle-proof}
        (\ell+1)a_y-T_\ell a_x=((-1)^{\frac{k+j}{2}-1}T_\ell\Fr_\ell-(\ell+1))(a-p^{s(\ell)}c)
    \end{equation}
    for some $c\in\T_m$. Both terms are divisible by $p^{s(\ell)}$ and $\T_m$ is a (torsion-)free module over $\mathcal{R}_m$, therefore we can divide the above equality by $p^{s(\ell)}$. Moreover, as a consequence of Proposition \ref{prop:Eichler-Shimura-relation-big-Heegner-points}, we have the equality $a_y=\Fr_\ell a_x$ (see \cite[Lemma 2.32]{mastella-zerman2025:anticyclotomic} for more details). Therefore, the relation \eqref{eq:relation-middle-proof} implies that
    \begin{equation}\label{eq:key-formula}
        \bigg(\frac{\ell+1}{p^{s(\ell)}}\Fr_\ell-\frac{T_\ell}{p^{s(\ell)}}\bigg)\bar{a}_x=\bigg(\frac{(-1)^{\frac{k+j}{2}-1}T_\ell}{p^{s(\ell)}}\Fr_\ell-\frac{(\ell+1)}{p^{s(\ell)}}\bigg) \bar{a}
    \end{equation}
    on $\T_{m,s(\ell)}$, where $\bar{a}_x$ is the reduction of $a_x$ to $\T_{m,s(\ell)}$. 

    Notice that the isomorphisms of diagram \eqref{eq:diagram-unramified-cohomology} hold also with $\T_{m,s(\ell)}$ in place of $\T_m$. Since $\Fr_\gl$ acts trivially on $\T_{m,s(\ell)}$, we have that $\bar{a}_x$ is the element of $\T_{m,s(\ell)}$ attached to $\bar{\boldsymbol{x}}$ in this way, i.e.~$\bar{a}_x=\bar{x}(\Fr_{\gl_n})$. By \eqref{eq:relation-z0-a}, we also have that $\bar{a}=-f_{\bar{y}}(\tilde{\gs}_\ell)=-\bar{\boldsymbol{z}}_0(\sigma_\ell)$. Since the action of $\Gal(\bar{\Q}_\ell/K_\gl)$ is trivial on $\T_{m,s(\ell)}$, both these quantities don't depend on the chosen cocycles, and the second one doesn't depend on the chosen lift of $\sigma_\ell$. Therefore, we can rewrite the formula \eqref{eq:key-formula} as
    \begin{equation}\label{eq:formula-proof-Kt[n]}
        \bigg(\frac{(\ell+1)\Fr_\ell-T_\ell}{p^{s(\ell)}}\bigg)\tilde{\boldsymbol{\gk}}(n)_{t,m,s(\ell)}(\Fr_{\gl_n})=\bigg(\frac{(\ell+1)-(-1)^{\frac{k+j}{2}-1}T_\ell\Fr_\ell}{p^{s(\ell)}}\bigg) \tilde{\boldsymbol{\gk}}(n\ell)_{t,m,s(\ell)}(\tilde{\gs}_\ell)
    \end{equation}
    where $\tilde{\boldsymbol{\gk}}(n)_{t,m,s(\ell)}=D_n\mathfrak{z}(n)_{t,m,s(\ell)}=\bar{\boldsymbol{x}}$ and $\tilde{\boldsymbol{\gk}}(n\ell)_{t,m,s(\ell)}=(\res^{K_t[n\ell]}_{K[n]})^{-1}D_{n\ell}\mathfrak{z}(n\ell)_{t,m,s(\ell)}=[f_{\bar{y}}]$. By \cite[Lemma 1.2.1]{Mazur-Rubin:Kolyvagin-systems}, there are isomorphisms analogous to those defined in \eqref{eq:definition-finite-singular} for finite extensions of $K$ and primes above $\gl$. Using the obvious notation, we then have that
    \begin{equation*}
        \tilde{\boldsymbol{\gk}}(n)_{t,m,s(\ell)}(\Fr_{\gl_n})=\ga_{\gl_n}(\tilde{\boldsymbol{\gk}}(n)_{t,m,s(\ell)})\quad\text{and}\quad \tilde{\boldsymbol{\gk}}(n\ell)_{t,m,s(\ell)}(\tilde{\gs}_\ell)=\beta_{\gl_n}([\tilde{\boldsymbol{\gk}}(n\ell)_{t,m,s(\ell)}]_{\s}\otimes\sigma_\ell).
    \end{equation*}
    We now project the formula \eqref{eq:formula-proof-Kt[n]} to $\T_{m,s}$. Using the functoriality of the finite-singular decomposition, we can apply $\Cor^{K_t[1]}_{K_t}\circ (\res^{K_t[n]}_{K_t[1]})^{-1}$ and obtain that
    \begin{equation*}
        \bigg(\frac{(\ell+1)\Fr_\ell-T_\ell}{p^{s(\ell)}}\bigg)\ga_{\gl'}(\boldsymbol{\gk}'(n)_{t,m,s})=\bigg(\frac{(\ell+1)-(-1)^{\frac{k+j}{2}-1}T_\ell\Fr_\ell}{p^{s(\ell)}}\bigg) \beta_{\gl'}([\boldsymbol{\gk}'(n\ell)_{t,m,s}]_{\s}\otimes\sigma_\ell)
    \end{equation*}
    where $\gl'=\gl_n\cap K_t$. Since $\Fr_\ell^2=\Fr_\gl$ acts as the identity on $\T_{m,s}$, we can re-write the numerator of the coefficient of the right-hand side as $((\ell+1)\Fr_\ell-(-1)^{\frac{k+j}{2}-1}T_\ell)\Fr_\ell$. Thanks to Corollary \ref{cor:finer-choice-of-primes}, the coefficients of the left-hand side and the right-hand side are invertible endomorphisms of $\T_{m,s}$. Therefore, there is $\vartheta_{\gl'}\in\Aut(\T_{m,s})$ such that 
    \begin{equation*}
        \ga_{\gl'}(\boldsymbol{\gk}'(n)_{t,m,s})=\vartheta_{\gl'}(\beta_{\gl'}([\boldsymbol{\gk}'(n\ell)_{t,m,s}]_{\s}\otimes\sigma_\ell).
    \end{equation*}
    Since there were no constraints in the choice of the prime $\gl_n$ made at the beginning of the proof, the above relation holds for every prime $\gl'$ of $K_t$ above $\gl$. We then conclude by applying semi-local Shapiro's map to both sides.
\end{proof}

With this result in hand, we can conclude the proof of Theorem \ref{thm:main-thm}.

\begin{proof}[Proof of Theorem \ref{thm:main-thm}]
    To begin with, let's fix $m,s\in\Z_{>0}$ and $t\ge 0$. For every couple $(n,\ell)$ with $n\ell\in\mathcal{N}_{m,s}'$ and $\ell$ prime, define $\chi_{n,\ell}:=\vartheta_\ell^{-1}$ to be the inverse of the automorphisms that appear in Lemma \ref{lem:key-formula}. Define
    \begin{equation*}
        \boldsymbol{\gk}^{\ac}(n)_{t,m,s}:=\boldsymbol{\gk}(n)_{t,m,s}\otimes\bigotimes_{\ell'\mid n}\gs_{\ell'}\in H^1_{\mathcal{F}_{\Gr}}(K,\T_{t,m,s})\otimes\mathcal{G}(n).
    \end{equation*}
    Combining Theorem \ref{thm:kolyvagin-classes-lie-in-the-Selmer} with Lemma \ref{lem:key-formula}, we conclude that $\{\boldsymbol{\gk}^{\ac}(n)_{t,m,s}\}_{n\in\mathcal{N}_{t,m,s}'}$ is an element of $\KS(\T_{t,m,s},\mathcal{F}_{\Gr},\mathcal{P}_{m,s}',\{\chi_{n,\ell}\})$. 

    When varying $t,m,s$, we need to prove that these classes interpolate into a modified universal Kolyvagin system. First, the automorphisms $\chi_{n,\ell}$ don't depend on $t,m,s$. Indeed, fix $(n,\ell)$ such that $\ell$ is prime and $n\ell\in\mathcal{N}'$, and choose $t,t',m,m',s,s'$ with the property that $n\ell\in \mathcal{N}_{m,s}'\cap\mathcal{N}_{m',s'}'$. Then, it is clear from the proof of Lemma \ref{lem:key-formula} that the automorphisms $\vartheta_\ell=\chi_{n,\ell}^{-1}$ on $\T_{t,m,s}$ and $\T_{t',m',s'}$ both are the image of a unique automorphism $\vartheta_\ell$ of $\T^\ac$.

    Then, it is enough to check that, whenever $t'\ge t$, $m'\ge m$ and $s'\ge s$, the image of $\{\boldsymbol{\gk}^{\ac}(n)_{t',m',s'}\}_{n\in\mathcal{N}_{m',s'}}$ under the natural map
    \begin{equation*}
        \KS(\T_{t',m',s'},\mathcal{F}_{\Gr},\mathcal{P}_{m',s'},\{\chi_{n,\ell}\})\longrightarrow \KS(\T_{t,m,s},\mathcal{F}_{\Gr},\mathcal{P}_{m',s'},\{\chi_{n,\ell}\})
    \end{equation*}
    coincides with $\{\boldsymbol{\gk}^{\ac}(n)_{t,m,s}\}_{n\in\mathcal{N}_{m',s'}}$. When $t=t'$, this follows easily from the fact that the classes $\boldsymbol{\gk}^{\ac}(n)_{t',m',s'}$ and $\boldsymbol{\gk}^{\ac}(n)_{t,m,s}$ are built as the image of the same big Heegner class $\mathfrak{z}(n)_t$. When, instead, $m'=m$, $s'=s$ and $t'>t$, one uses the compatibility of Lemma \ref{lem:vertical-compatibility-kn}.

    Lastly, it is clear from the definitions that $\boldsymbol{\gk}^{\ac}(1)_{t,m,s}=\boldsymbol{\gk}(1)_{t,m,s}=(\sh_t\circ \Cor^{K_t[1]}_{K_t})(\mathfrak{z}(1)_{t,m,s})$. Taking the inverse limit on $t,m,s$, we obtain the relation claimed in Theorem \ref{thm:main-thm}.
\end{proof}

\section{Anticyclotomic Iwasawa theory}\label{sec:Anticyclotomic-Iwasawa-theory}

In this section we recover a classical consequence of the existence of a nontrivial Kolyvagin system to the Iwasawa theory of the representation $\T^\dag$. In particular, following the ideas of \cite{Fouquet13:Dihedral}, we give a proof of one divisibility of the big Heegner point main conjecture in this context (see \cite[Conjecture 10.8]{Longo-Vigni11:quaternion-algebras-Hida-families}). From now on, we will work under the following assumption.

\begin{assumption}\label{ass:R-regular}
    The ring $\calR$ is regular.
\end{assumption}

\subsection{$\calR^\ac$-modules} 

We start by describing in detail the algebraic structure of $\calR$ and $\calR^{\ac}$.

\begin{lemma}\label{lem:regularity}
    The rings $\calR$ and $\calR^\ac$ are complete Noetherian regular local integrally closed UFDs, whose height 1 prime ideals are principal.
\end{lemma}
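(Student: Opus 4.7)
The plan is to bootstrap everything from Assumption~\ref{ass:R-regular} and the isomorphism $\calR^{\ac}\cong\calR\llbracket X\rrbracket$ of $\calR$-algebras (used in the proof of Lemma~\ref{lem:residual-representation-Iwasawa-irreducible}), and then invoke a short chain of standard commutative algebra facts.

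First I would handle $\calR$. Completeness, the Noetherian property, and locality are already recorded in Lemma~\ref{lem:algebraic-properties-of-R}, and regularity is Assumption~\ref{ass:R-regular}. Regular local rings are normal (in particular integrally closed in their fraction field), and by the Auslander--Buchsbaum theorem every regular local ring is a UFD. In a UFD, every height-one prime ideal is principal (one picks any nonzero element of the prime, factors it into irreducibles, and observes that one of the irreducible factors must lie in the prime, generating it by Krull's height theorem since irreducibles generate prime ideals of height one in a UFD).

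Next I would pass to $\calR^{\ac}\cong\calR\llbracket X\rrbracket$. Completeness with respect to the maximal ideal, the Noetherian property, and locality are standard for formal power series rings over a complete Noetherian local ring. Regularity is preserved: if $(\calR,\m_\calR)$ is regular of dimension $d$ with a regular system of parameters $x_1,\dots,x_d$, then $(x_1,\dots,x_d,X)$ is a regular system of parameters in $\calR\llbracket X\rrbracket$, whose maximal ideal is $\m_\calR+(X)$ and whose Krull dimension is $d+1$. Hence $\calR^{\ac}$ is again a regular local ring, and the same pair of theorems (regular $\Rightarrow$ normal and UFD, UFD $\Rightarrow$ height-one primes principal) applies verbatim.

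There is essentially no obstacle: the only thing to verify carefully is that $\calR^{\ac}$ really is $\calR\llbracket X\rrbracket$ (a single variable, so the regularity transfer is immediate), which has already been used in the paper, and that the UFD/normality implications are being applied to an honest regular local ring rather than merely to a Cohen--Macaulay one. I would simply present the argument as a two-step quotation of Auslander--Buchsbaum plus the fact that power series in one variable over a regular local ring are regular local, ending with the observation that in any UFD the height-one primes are principal.
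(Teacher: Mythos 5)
Your proposal is correct and follows essentially the same route as the paper: deduce that $\calR$ is a complete Noetherian regular local ring from Lemma \ref{lem:algebraic-properties-of-R} and Assumption \ref{ass:R-regular}, transfer these properties to $\calR^{\ac}\cong\calR\llbracket X\rrbracket$, and then apply Auslander--Buchsbaum together with the standard UFD implications (normality and principality of height-one primes). The only cosmetic difference is that the paper derives integral closedness from the UFD property rather than directly from regularity, which changes nothing.
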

\begin{proof}
    Combining Lemma \ref{lem:algebraic-properties-of-R} with Assumption \ref{ass:R-regular}, we have that $\mathcal{R}$ is a complete Noetherian regular local ring. By \cite[Theorem 3.3, Exercise 8.6, Theorem 19.5]{Matsumura89:Commutative-ring-theory}, the same is true for $\calR^\ac$. Then, by Auslander-Buchsbaum's theorem (see \cite[Theorem 20.3]{Matsumura89:Commutative-ring-theory}), both $\mathcal{R}$ and $\mathcal{R}^{\ac}$ are UFDs. Therefore, they are also integrally closed (see \cite[Example 9.1]{Matsumura89:Commutative-ring-theory}) and every high $1$ prime ideal is principal by \cite[Theorem 20.1]{Matsumura89:Commutative-ring-theory}.
\end{proof}

There is a rich theory of pseudo-isomorphisms between finitely generated  $\calR^\ac$-modules (see e.g.~\cite[§VII.4.4]{Bourbaki98:commutative-algebra}). For what is needed here, let's recall that the \emph{characteristic ideal} of a finitely generated torsion $\calR^\ac$-module $M$ is
\begin{equation*}
         \Char_{\calR^{\ac}}(M)=\prod_{\hight\p=1} \p^{\length_{\calR^\ac_\p}(M_\p)},
\end{equation*}
where the product runs over the set of high-one primes of $\calR^\ac$. This is a principal ideal of $\calR^\ac$ with the property that $M$ is pseudo-isomorphic to $\calR^\ac/\Char_{\calR^{\ac}}(M)$.

\subsection{Specializations}

We shall now study the behaviour of Greenberg (and other) Selmer modules when applying some families of specialization maps, in the following sense.

\begin{definition}
    Let $R$ be a complete local Noetherian $\OF$-algebra with finite residue field of characteristic $p$.
    \begin{itemize}
        \item[(i)] An $\OF$-algebra morphism $\sfrak\colon R\to S$, where $S$ is a complete local Noetherian domain with finite residue field of characteristic $p$, is called an \emph{S-specialization} of $R$.
        \item[(ii)] If $T$ is an $R$-module, denote by $T_\sfrak$ (or $T_S$, when the map $\sfrak$ is clear from the context) the $S$-module $T\otimes_{\sfrak}S$.
    \end{itemize}
\end{definition}

Notice that we allow the case $\sfrak=\id\colon R\to R$. When $\sfrak\colon \calR^{\ac}\to S$ is a specialization of $\calR^\ac$, define $\V_{\mathfrak{s}}^{\ac}$, $\A_{\mathfrak{s}}^{\ac}$, $F_v^{\pm}(\T_{\mathfrak{s}}^{\ac})$, $F_v^{\pm}(\V_{\mathfrak{s}}^{\ac})$ and $F_v^{\pm}(\A_{\mathfrak{s}}^{\ac})$ to be $\T_{\mathfrak{s}}^\ac\otimes_S\Frac(S)$, $\T_{\mathfrak{s}}^\ac\otimes_S S^\vee$, $F_v^{\pm}(\T^\ac)\otimes_{\mathfrak{s}}S$, $F_v^{\pm}(\T_{\mathfrak{s}}^\ac)\otimes_S\Frac(S)$ and $F_v^{\pm}(\T_{\mathfrak{s}}^\ac)\otimes_S S^\vee$ respectively, where $(\bullet)^\vee:=\Hom_{\cont}(\bullet,\Q_p/\Z_p)$ denotes the Pontryagin dual of $\bullet$. 

One can define the Greenberg Selmer structure on $\V^\ac:=\V^\ac_{\id}$ and on $\A^\ac:=\A^\ac_{\id}$ exactly as in Definition \ref{dfn:strict-Greenberg-selmer-structure}. For any specialization $\sfrak$, this induces a Selmer structure on $\T_{\mathfrak{s}}^{\ac}$, $\V_{\mathfrak{s}}^{\ac}$ and $\A_{\mathfrak{s}}^{\ac}$ by propagation, to be denoted again by $\mathcal{F}_\Gr$. However, in this cases we also have the following natural Selmer structure.

\begin{definition}
        Let $L$ be a finite extension of $K$, $\sfrak\colon \calR^{\ac}\to S$ be a specialization of $\calR^\ac$ and let $X_\sfrak$ be any of $\T_{\mathfrak{s}}^{\ac}$, $\V_{\mathfrak{s}}^{\ac}$ and $\A_{\mathfrak{s}}^{\ac}$. The Selmer structure $\mathcal{F}_{\sfrak}$ on $X_\sfrak$ over $L$ is defined by setting 
        \begin{equation*}
            H^1_{\mathcal{F}_\sfrak}(L_w,X_\sfrak):=\begin{cases}
                H_{\ur}^1(L_w,X_\sfrak)\quad &\text{if $w\nmid p$}\\
                \ker\big(H^1(L_w,X_\sfrak)\to H^1(L_w,F_w^-(X_\sfrak))\big)\quad &\text{if $w\mid p$}
            \end{cases}
        \end{equation*}
        where $w$ runs over all places of $L$.
    \end{definition}

When $\sfrak=\id$ we have that $\mathcal{F}_\Gr=\mathcal{F}_{\sfrak}$, but in general they are different. Notice that the argument of Lemma \ref{lem:easier-description-Greenberg-condition} doesn't work here because $\sfrak$ is not necessarily surjective. However, for every specialization $\sfrak\colon \calR^{\ac}\to S$ there is a natural map
\begin{equation}\label{eq:Greenberg-and-sfrak-selmer-structure}
    H^1_{\mathcal{F}_\Gr}(L,X_{\id})\longrightarrow H^1_{\mathcal{F}_\sfrak}(L,X_\sfrak).
\end{equation}

\subsection{Comparison with other Selmer groups}

In this section, we exploit the link between the $\mathcal{F}_\sfrak$-Selmer groups and other Selmer groups, such as Nekov\' a\v r's extended Selmer groups and Bloch--Kato Selmer groups. This is needed for an explicit comparison with other works, such as Howard \cite{Howard04:heegner-point-kolyvagin-system} and Fouquet \cite{Fouquet13:Dihedral}.

Let $X$ be any of $\T_{\mathfrak{s}}^{\ac}$, $\V_{\mathfrak{s}}^{\ac}$ and $\A_{\mathfrak{s}}^{\ac}$. In this setting, Nekov\' a\v r defined some \emph{extended Selmer groups} $\tilde{H}^1_{\f}(K,X)$ in terms of his Selmer complexes. These Selmer groups are linked to the $\mathcal{F}_\sfrak$-Selmer groups via the exact sequence (see \cite[Lemma 9.6.3]{Nekovar06:selmer-complexes})
\begin{equation}\label{eq:Nekovar-exact-sequence}
    \bigoplus_{v\mid p} H^0(K_v, F_v^-(X))\to \tilde{H}^1_{\f}(K,X)\to H^1_{\mathcal{F}_\sfrak}(K,X)\to 0.
\end{equation}
We show that this exact sequence, combined with Assumption \ref{ass:H.stz}, gives an isomorphism between the two Selmer groups in some relevant cases.

\begin{lemma}\label{lem:equality-Selmer-Nekovar}
    Let $X$ be one of $\T_\sfrak^\ac$ or $\A_\sfrak^\ac$. Then
    \begin{itemize}
        \item[(i)] $H^0(K_v, F_v^-(X))=\{0\}$ for every $v\mid p$;
        \item[(ii)] $\tilde{H}^1_{\f}(K,X)\cong H^1_{\mathcal{F}_\sfrak}(K,X)$.
    \end{itemize}
\end{lemma}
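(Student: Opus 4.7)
The plan is as follows. For part (i), the key observation is that $F_v^-(\T_\sfrak^\ac)\cong S$ as an $S$-module, with $G_{K_v}$ acting through a character $\chi_\sfrak\colon G_{K_v}\to S^\times$ obtained by composing with $\sfrak$ the character $\xi\colon G_{K_v}\to(\calR^\ac)^\times$ that describes the action on the free rank-$1$ $\calR^\ac$-module $F_v^-(\T^\ac)=F_v^-(\T^\dag)\otimes_\calR\calR^\ac$. Since $\sfrak$ is a local homomorphism, it induces an embedding of residue fields $\F_{\calR^\ac}=\F_\calR\hookrightarrow\F_S$, and the residual character $\bar\chi_\sfrak$ is the composition of $\xi\bmod\m_{\calR^\ac}$ with this embedding. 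By Assumption \ref{ass:H.stz} the character $\xi\bmod\m_{\calR^\ac}$ is nontrivial, so the same is true for $\bar\chi_\sfrak$: there exists $g_0\in G_{K_v}$ with $\chi_\sfrak(g_0)-1\notin\m_S$, i.e.\ $\chi_\sfrak(g_0)-1\in S^\times$.

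To finish (i) for $X=\T_\sfrak^\ac$, I observe that any invariant $s\in S$ satisfies $(\chi_\sfrak(g_0)-1)s=0$, so $s=0$ since $\chi_\sfrak(g_0)-1$ is a unit. For $X=\A_\sfrak^\ac$, I first note that
\begin{equation*}
F_v^-(\A_\sfrak^\ac)=F_v^-(\T_\sfrak^\ac)\otimes_S S^\vee\cong S^\vee
\end{equation*}
as $S$-modules, with $G_{K_v}$ acting via $\chi_\sfrak$ through the first factor. An element $\phi\in S^\vee$ is $G_{K_v}$-invariant precisely when $\phi\bigl((\chi_\sfrak(g)-1)x\bigr)=0$ for every $g\in G_{K_v}$ and every $x\in S$. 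Applying this with $g=g_0$ and using that $\chi_\sfrak(g_0)-1\in S^\times$ forces $\phi(x)=0$ for every $x\in S$, hence $\phi=0$.

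Part (ii) is then immediate: the exact sequence \eqref{eq:Nekovar-exact-sequence} shows that the natural surjection $\tilde H^1_\f(K,X)\twoheadrightarrow H^1_{\mathcal{F}_\sfrak}(K,X)$ has kernel equal to the image of $\bigoplus_{v\mid p}H^0(K_v,F_v^-(X))$, which vanishes by (i); hence the surjection is an isomorphism. The only mildly delicate point in the whole argument is the Pontryagin-dual reformulation for the divisible module $\A_\sfrak^\ac$, but once (i) is established for both $\T_\sfrak^\ac$ and $\A_\sfrak^\ac$, (ii) is a formal consequence of Nekov\'a\v r's sequence.
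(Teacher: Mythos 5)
Your argument is correct, and it reaches the conclusion by a somewhat more explicit route than the paper. For $X=\T_\sfrak^\ac$ the paper reduces modulo $\m_S$, invokes Assumption \ref{ass:H.stz} to kill the invariants of the residual representation, and then applies the Nakayama-type Lemma \ref{lem:nakayama-for-Galois-representation}; your observation that some $g_0\in G_{K_v}$ acts through a unit with $\chi_\sfrak(g_0)-1\in S^\times$ is the concrete content of that reduction, made available by the rank-one structure of $F_v^-$. The genuine divergence is in the case $X=\A_\sfrak^\ac$: the paper passes to the Pontryagin dual $M=H^0(K_v,F_v^-(\A_\sfrak^\ac))^\vee$, applies Nakayama to the finitely generated $S$-module $M$, and identifies $(M/\m_S M)^\vee$ with the invariants of $F_v^-(\T_\sfrak^\ac)/\m_S F_v^-(\T_\sfrak^\ac)$, whereas your direct computation with $\phi\in S^\vee$ and the unit $\chi_\sfrak(g_0)-1$ bypasses the duality detour entirely and is arguably cleaner. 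Both arguments rest on the same two inputs --- that $F_v^-$ is free of rank one over $S$, so the $G_{K_v}$-action is through a character, and that Assumption \ref{ass:H.stz} forces the residual character to be nontrivial --- and both implicitly use that $\sfrak$ is a local homomorphism, so that it induces an embedding of residue fields (a standing convention for specializations here, which you rightly make explicit). Part (ii) is handled identically in the two proofs, as a formal consequence of Nekov\'a\v r's exact sequence \eqref{eq:Nekovar-exact-sequence} once (i) is known.
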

\begin{proof}
    Let $X=\T_\sfrak^\ac$. Since the residue field of $S$ is finite, the residual representation of $F_v^-(\T_\sfrak^\ac)=F_v^-(\T^\ac)\otimes_\sfrak S$ is a finite base change of the 1-dimensional representation $F_v^-(\bar{\T}^\ac)$, that has no $G_{K_v}$-invariants by Assumption \ref{ass:H.stz}. Therefore, $H^0(K_v,F_v^-(\T_\sfrak^\ac)/\m_S)=\{0\}$, and we obtain point (i) by applying Lemma \ref{lem:nakayama-for-Galois-representation}. Point (ii) descends from (i) by the exact sequence \eqref{eq:Nekovar-exact-sequence}. 
    
    Let now $M:=H^0(K_v, F_v^-(\A_\sfrak^\ac))^\vee$ be the Pontryagin dual of $H^0(K_v, F_v^-(\A_\sfrak^\ac))$. By Nekov\' a\v r's duality theory (see the first paragraphs of \cite[§2]{Kim-Longo:anticyclotomic-main-conjecture} for a compact treatment), $M$ is isomorphic to the module of $G_{K_v}$-coinvariants of $\Hom_\calR(F_v^-(\T_\sfrak^\ac),S)$, that is a finitely generated $S$-module. Therefore, by Nakayama's lemma, in order to show (i) it is enough to show that $M/\m_{S}M=\{0\}$. For this, taking again Pontryagin duals, we obtain that
    \begin{equation*}
        (M/\m_{S}M)^\vee\cong M^\vee[\m_S]=H^0(K_v,F_v^-(\A_\sfrak^\ac))[\m_S]=H^0(K_v,F_v^-(\A_\sfrak^\ac)[\m_S]).
    \end{equation*}
    Now, the Galois representation $F_v^-(\A_\sfrak^\ac)[\m_S]$ is isomorphic to $F_v^-(\T_\sfrak^\ac)/\m_S F_v^-(\T_\sfrak^\ac)$, which was shown to have no $G_{K_v}$-invariants in the first part of the proof, therefore we obtain point (i). Point (ii) follows again from the exact sequence \eqref{eq:Nekovar-exact-sequence}.
\end{proof}

When $S$ is a discrete valuation ring (DVR) finite over $\OF$, there is the notion of the Bloch-Kato condition on $\T_{\mathfrak{s}}^{\ac}$ and $\A_{\mathfrak{s}}^{\ac}$, which we now recall following \cite[Definition 5.20]{Fouquet13:Dihedral}. To begin with, notice that the natural maps $\iota\colon \T^\ac_\sfrak\to\V_\sfrak^\ac$ and $\eta\colon\V_\sfrak^\ac\to\A_\sfrak^\ac$ extend to maps in cohomology, to be denoted with the same name.

\begin{definition}
    Let $L$ be a finite extension of $K$ and $\mathfrak{s}$ be a specialization of $\mathcal{R}^{\ac}$ to a discrete valuation ring $S$ finite over $\OF$. Define the \emph{Bloch--Kato Selmer structure $\mathcal{F}_{\BK}$} on $\T_{\mathfrak{s}}^\ac$ by setting the local conditions
    \begin{equation*}
            H^1_{\mathcal{F}_{\BK}}\big(L_w,\T_{\mathfrak{s}}^\ac\big)=\begin{cases}
                \iota^{-1}\big(H_{\ur}^1(L_w,\V_{\mathfrak{s}}^\ac)\big)\quad &\text{if $w\mid N$}\\
                \iota^{-1}\big(H^1(L_w,F_v^+(\V_{\mathfrak{s}}^\ac))\big)\quad &\text{if $w\mid p$}\\
                H_{\ur}^1(L_w, \T_{\mathfrak{s}}^\ac) &\text{if $w\nmid Np$}.
            \end{cases}
        \end{equation*}
    where $w$ varies among the places of $L$ and $v=w\cap K$. Similarly, we define the \emph{Bloch--Kato Selmer structure $\mathcal{F}_{\BK}$} on $\A_{\mathfrak{s}}^\ac$ by setting local conditions
    \begin{equation*}
            H^1_{\mathcal{F}_{\BK}}(L_w,\A_{\mathfrak{s}}^\ac)=\begin{cases}
                \eta\big(H_{\ur}^1(L_w,\V_{\mathfrak{s}}^\ac)\big)\quad &\text{if $w\mid N$}\\
                \eta\big(H^1(L_w,F_v^+(\V_{\mathfrak{s}}^\ac))\big)\quad &\text{if $w\mid p$}\\
                H_{\ur}^1(L_w, \A_{\mathfrak{s}}^\ac) &\text{if $w\nmid Np$}.
            \end{cases}
        \end{equation*}
\end{definition}

In general, thanks to \cite[Lemma 3.5]{Rubin00:euler-systems}, there are inequalities $H^1_{\mathcal{F}_\sfrak}(L,\T_\sfrak^\ac)\subseteq H^1_{\mathcal{F}_\BK}(L,\T_\sfrak^\ac)$ and $H^1_{\mathcal{F}_\sfrak}(L,\A_\sfrak^\ac)\supseteq H^1_{\mathcal{F}_\BK}(L,\A_\sfrak^\ac)$. However, thanks to the assumptions of Section \ref{sec:controlling-ramification}, we have the following stronger result when $L=K$.

\begin{lemma}\label{lem:Greenberg=Bloch-Kato}
    Let $\sfrak\colon \calR^{\ac}\to S$ be a specialization to a discrete valuation ring $S$ finite over $\OF$. Then
    \begin{equation*}
        H^1_{\mathcal{F}_\sfrak}(K,\T_\sfrak^\ac)= H^1_{\mathcal{F}_\BK}(K,\T_\sfrak^\ac)\quad\text{and}\quad H^1_{\mathcal{F}_\sfrak}(K,\A_\sfrak^\ac)= H^1_{\mathcal{F}_\BK}(K,\A_\sfrak^\ac).
    \end{equation*}
\end{lemma}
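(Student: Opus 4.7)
The plan is to analyze the difference between the two Selmer structures prime by prime. Since both $\mathcal{F}_\sfrak$ and $\mathcal{F}_\BK$ impose the unramified condition at every place $w \nmid Np$, the comparison reduces to the finite set of primes $v\mid Np$. I will treat the cases $v\mid N$ and $v\mid p$ separately and, for each of them, prove the equality of local conditions for both $X = \T_\sfrak^\ac$ and $X = \A_\sfrak^\ac$, relying on the torsion-theoretic inputs provided by Assumptions \ref{ass:tamagawa-factors-at-N} and \ref{ass:H.stz}.

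At a prime $v \mid N$, for $X = \T_\sfrak^\ac$ the quotient $H^1_{\mathcal{F}_\BK}(K_v,\T_\sfrak^\ac)/H^1_{\mathcal{F}_\sfrak}(K_v,\T_\sfrak^\ac)$ injects into the $S$-torsion of the singular quotient $H^1_\s(K_v,\T_\sfrak^\ac)$, which in turn embeds via inflation-restriction into $H^1(\mathfrak{I}_v,\T_\sfrak^\ac)$; the latter is $S$-torsion-free by Proposition \ref{prop:tamagawa-numbers-at-N}, which gives the equality. For $X = \A_\sfrak^\ac$, I will combine Proposition \ref{prop:tamagawa-numbers-at-N} with the fact that $\A_\sfrak^\ac$ is $S$-cotorsion to see that $0 \to (\T_\sfrak^\ac)^{\mathfrak{I}_v} \to (\V_\sfrak^\ac)^{\mathfrak{I}_v} \to (\A_\sfrak^\ac)^{\mathfrak{I}_v} \to 0$ is short exact (the connecting map lands in an $S$-torsion-free group but has $S$-cotorsion source, hence is zero). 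Taking $\Gal(K_v^{\ur}/K_v)$-cohomology and invoking $\cd(\hat{\Z})=1$, I then deduce the surjectivity of $\eta\colon H^1_\ur(K_v,\V_\sfrak^\ac)\twoheadrightarrow H^1_\ur(K_v,\A_\sfrak^\ac)$, which combined with the obvious reverse inclusion yields the claim.

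At a prime $v \mid p$, for $X = \T_\sfrak^\ac$ the Bloch--Kato condition unfolds, via the long exact sequence of $0 \to F^+_v(\V) \to \V \to F^-_v(\V) \to 0$, as $\ker\bigl(H^1(K_v,\T_\sfrak^\ac)\to H^1(K_v,F^-_v(\V_\sfrak^\ac))\bigr)$; its agreement with Greenberg reduces to the injectivity of $H^1(K_v,F^-_v(\T_\sfrak^\ac))\to H^1(K_v,F^-_v(\V_\sfrak^\ac))$. Multiplying by a uniformizer $\pi$ of $S$, this amounts to the vanishing of $H^0(K_v,F^-_v(\T_\sfrak^\ac)/\pi)$, which follows from Assumption \ref{ass:H.stz}: the locality of $\sfrak$ forces $\calR^\ac\to S/\pi$ to factor through $\F_{\calR^\ac}$, so the reduction is a finite-field extension of $F^-_v(\bar\T^\ac)$, which has no $G_{K_v}$-invariants. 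For $X = \A_\sfrak^\ac$ both local conditions are realized as images in $H^1(K_v,\A_\sfrak^\ac)$ of $F^+_v$-cohomology, and the gap is controlled by the cokernel of $\eta\colon H^1(K_v,F^+_v(\V_\sfrak^\ac)) \to H^1(K_v,F^+_v(\A_\sfrak^\ac))$, which injects into $H^2(K_v,F^+_v(\T_\sfrak^\ac))$. I will show that this last group vanishes via Tate local duality, identifying (through the pairing \eqref{eq:pairing}, which after $\sfrak$ gives $F^+_v(\T_\sfrak^\ac)^*(1)\cong F^-_v(\T_\sfrak^\ac)$) its Pontryagin dual with $H^0(K_v,F^-_v(\A_\sfrak^\ac))$, which is zero by Lemma \ref{lem:equality-Selmer-Nekovar}(i).

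The hardest step will be the Tate-duality identification at $v\mid p$ for $X = \A_\sfrak^\ac$: one must verify that the pairing \eqref{eq:pairing} specializes correctly along $\sfrak$ and that the continuous-cohomology/Pontryagin duality dictionary applies cleanly to the free $S$-module $F^+_v(\T_\sfrak^\ac)$ rather than to a finite module. All remaining ingredients are essentially formal consequences of Proposition \ref{prop:tamagawa-numbers-at-N} and Lemma \ref{lem:equality-Selmer-Nekovar}, together with the long exact sequences attached to the filtrations $F^+_v\subseteq \cdot$ and the tautological sequence $0\to\T_\sfrak^\ac\to\V_\sfrak^\ac\to\A_\sfrak^\ac\to 0$.
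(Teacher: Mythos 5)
Your proof is correct and follows essentially the same route as the paper: a place-by-place comparison in which the equality at $v\mid N$ rests on the torsion-freeness of $H^1(\mathfrak{I}_v,\T^\ac_\sfrak)$ from Proposition \ref{prop:tamagawa-numbers-at-N}, and the equality at $v\mid p$ rests on the vanishing of $H^0(K_v,F_v^-(\cdot))$ and the Tate-dual vanishing of $H^2(K_v,F_v^+(\T^\ac_\sfrak))$ via Lemma \ref{lem:equality-Selmer-Nekovar} (the paper simply cites \cite[Lemma 3.5]{Rubin00:euler-systems} where you unfold the argument). One small wording fix: at $v\mid N$ the connecting map $(\A^\ac_\sfrak)^{\mathfrak{I}_v}\to H^1(\mathfrak{I}_v,\T^\ac_\sfrak)$ vanishes because $\A^\ac_\sfrak$ is an $S$-\emph{torsion} module (not ``cotorsion''), so its image in a torsion-free module is zero.
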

\begin{proof}
    If $v$ is a place of $K$ not dividing $Np$, by \cite[Lemma 3.5]{Rubin00:euler-systems} the $\mathcal{F}_\sfrak$ and the $\mathcal{F}_{\BK}$ local conditions coincide at $v$. If $v\mid N$, by considering the $\mathfrak{I}_v$-cohomology of the exact sequence $0\to \T^\ac_\sfrak\to\V^\ac_\sfrak\to\A^\ac_\sfrak\to 0$ and noting that the image of $(\V^\ac_\sfrak)^{\mathfrak{I}_v}$ in $(\A^\ac_\sfrak)^{\mathfrak{I}_v}$ is the maximal divisible subgroup $(\A^\ac_\sfrak)^{\mathfrak{I}_v}_{\di}$, we obtain that $(\A^\ac_\sfrak)^{\mathfrak{I}_v}/(\A^\ac_\sfrak)^{\mathfrak{I}_v}_{\di}$ injects into $H^1(\mathfrak{I}_v, \T^\ac_\sfrak)$, which is torsion-free thanks to Proposition \ref{prop:tamagawa-numbers-at-N}. Therefore, by point (iii) of \cite[Lemma 3.5]{Rubin00:euler-systems}, we conclude that $H^1_{\mathcal{F}_\Gr}(K_v,\T_\sfrak^\ac)= H^1_{\mathcal{F}_\BK}(K_v,\T_\sfrak^\ac)$ and $H^1_{\mathcal{F}_\Gr}(K_v,\A_\sfrak^\ac)= H^1_{\mathcal{F}_\BK}(K_v,\A_\sfrak^\ac)$.

    Assume now that $v\mid p$. By the definition of the corresponding local conditions, it is enough to show that the map $H^1(K_v, F_v^-(\T^\ac_\sfrak))\to H^1(K_v, F_v^-(\V^\ac_\sfrak))$ is injective and that the map $H^1(K_v, F_v^+(\V^\ac_\sfrak))\to H^1(K_v, F_v^+(\A^\ac_\sfrak))$ is surjective. The kernel of the first map is $H^0(K_v, F_v^-(\A^\ac_\sfrak))$, which is trivial by Lemma \ref{lem:equality-Selmer-Nekovar}. The cokernel of the second map is $H^2(K_v, F_v^+(\T^\ac_\sfrak))$, which is isomorphic to $H^0(K_v,F_v^-(\T^\ac_\sfrak))$ by duality (see the proof of Proposition \ref{lem:easier-description-Greenberg-condition}), which is trivial by Lemma \ref{lem:equality-Selmer-Nekovar}.
\end{proof}

\subsection{The big Heegner point main conjecture}\label{sec:big-heegner-point-main-conjecture}

Following the arguments of \cite[§5-6]{Fouquet13:Dihedral}, we can now use the universal modified Kolyvagin system of Theorem \ref{thm:main-thm} to prove one divisibility of the big Heegner point main conjecture. The main idea is to use specializations of $\calR^\ac$ to discrete valuation rings.

Let $\sfrak\colon \calR^\ac\to S$ be a specialization to a DVR finite over $\OF$. For every $i\in\Z_{>0}$, let $\mathcal{Q}_i$ be the set of primes $\ell\in\mathcal{P}$ whose Frobenius is conjugated with $\tau_c$ in $\Gal(K(\T^\ac_\sfrak/p^i)/\Q)$, choose a subset $\mathcal{Q}'$ of $\mathcal{Q}_1$ and call $\mathcal{Q}_i':=\mathcal{Q}'\cap\mathcal{Q}_i$. Denote by $\mathcal{M}'$ the set of square-free products of elements of $\mathcal{Q}'$.  For every couple $(n,\ell)$ with $\ell$ prime and $n\ell\in\mathcal{M}'$, let $\chi_{n,\ell}\in\Aut_{S}(\T^{\ac}_\sfrak)$ and denote with the same letter the induced automorphism of $\T^{\ac}_\sfrak/p^i$, for every $i\in\Z_{>0}$. Then, exactly as done in Section \ref{sec:modified-Kolyvagin-systems} (see \cite[Section 2.7]{mastella-zerman2025:anticyclotomic} for the general theory), one can define the $S$-module of universal Kolyvagin systems for $(\T^\ac_\sfrak,\mathcal{F}_\sfrak,\mathcal{Q}',\{\chi_{n,\ell}\})$ as
\begin{equation*}
            \overline{\KS}(\T^{\ac}_\sfrak,\mathcal{F}_\sfrak,\mathcal{Q}', \{\chi_{n,\ell}\}):=\varprojlim_{i}\varinjlim_{j\ge i} \KS(\T^\ac_\sfrak/p^i,\mathcal{F}_\sfrak,\mathcal{Q}_{j}', \{\chi_{n,\ell}\}).
\end{equation*}
If $\{\chi_{n,\ell}\}$ is a set of automorphisms of $\T^\ac$ as in Section \ref{sec:modified-Kolyvagin-systems} and $\mathcal{P}'$ is the set of primes of Definition \ref{dfn:kolyvagin-primes}, the functoriality of Kolyvagin systems with respect to the specialization $\sfrak$ together with equation \eqref{eq:Greenberg-and-sfrak-selmer-structure} yield a canonical map
\begin{equation}\label{eq:specialization-kolyvagin-system}
    \sfrak\colon\overline{\KS}(\T^\ac,\mathcal{F}_\Gr,\mathcal{P}',\{\chi_{n,\ell}\})\longrightarrow \overline{\KS}(\T_\sfrak^\ac,\mathcal{F}_\sfrak,\mathcal{P}',\{\chi_{n,\ell}\}),
\end{equation}
where we denote in the same way the automorphism of $\T_\sfrak^\ac$ induced by $\chi_{n,\ell}$. 

\begin{lemma}\label{lem:Howard-result}
    Let $\sfrak\colon \calR^{\ac}\to S$ be a specialization to a discrete valuation ring finite over $\OF$ and let $\boldsymbol{\gk}_\sfrak\in \overline{\KS}(\T^\ac_\sfrak,\mathcal{F}_\sfrak,\mathcal{P}',\{\chi_{n,\ell}\})$ with the property that $\boldsymbol{\gk}_\sfrak(1)\ne 0$. Then $H^1_{\mathcal{F}_\sfrak}(K,\T_{\sfrak}^\ac)$ is free of rank $1$ and there exists a torsion module $M$ finite over $S$ with
    \begin{equation*}
        \length_S(M)\le \length_S\big(H^1_{\mathcal{F}_\sfrak}(K,\T_{\sfrak}^\ac)/S\cdot\boldsymbol{\gk}_\sfrak(1)\big)
    \end{equation*}
    such that $H^1_{\mathcal{F}_\sfrak}(K,\A_{\sfrak}^\ac)\cong \Frac(S)/S\oplus M\oplus M$.
\end{lemma}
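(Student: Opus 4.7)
My plan is to deduce this lemma from the general structural theorem for modified universal Kolyvagin systems over a discrete valuation ring, in the spirit of Mazur--Rubin's \cite[Theorem 5.2.10]{Mazur-Rubin:Kolyvagin-systems} and Howard's \cite[Theorem 1.6.1]{Howard04:heegner-point-kolyvagin-system}, in the form developed in \cite[\S 2.7]{mastella-zerman2025:anticyclotomic}. The point is that once the Selmer structure $\mathcal{F}_\sfrak$ on $\T_\sfrak^\ac$ is shown to be self-dual and of core rank $1$, the nonvanishing of $\boldsymbol{\gk}_\sfrak(1)$ forces exactly the claimed structure on both the $\T$-side and the $\A$-side Selmer groups, with the length of the ``torsion part'' controlled by the index of $\boldsymbol{\gk}_\sfrak(1)$ inside $H^1_{\mathcal{F}_\sfrak}(K,\T_\sfrak^\ac)$.

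The two structural hypotheses would be checked as follows. For self-duality, Lemma \ref{lem:Greenberg=Bloch-Kato} identifies $\mathcal{F}_\sfrak$ with the Bloch--Kato Selmer structure on both $\T_\sfrak^\ac$ and $\A_\sfrak^\ac$, and the perfect alternating pairing of \eqref{eq:pairing}, after specialization via $\sfrak$ and tensoring with the Pontryagin dual, identifies $\A_\sfrak^\ac$ with the Cartier dual of $\T_\sfrak^\ac$ and matches the Bloch--Kato conditions at every place. For the core rank, a prime-by-prime Euler characteristic computation, using the fact that complex conjugation splits $\T_\sfrak^\ac$ into two eigenspaces of $S$-rank $1$ (a consequence of the alternating pairing, already observed after \eqref{eq:pairing}) together with the fact that primes in $\mathcal{P}'$ are inert in $K$, yields core rank $1$ exactly as in \cite[\S 1.5]{Howard04:heegner-point-kolyvagin-system}. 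The residual absolute irreducibility of $\bar{\T}^\ac$ as a $G_K$-representation (Remark \ref{rk:irreducibility-over-ring-class-fields}) and the big image hypothesis (Assumption \ref{ass:big-image}) supply the Galois-theoretic genericity required to run the Chebotarev-type argument that produces Kolyvagin primes realizing any desired pattern.

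With these inputs in place the structural theorem applies. On the one hand, $H^1_{\mathcal{F}_\sfrak}(K,\T_\sfrak^\ac)$ is $S$-free of rank $1$, because any nonzero $S$-torsion class would be forced to kill $\boldsymbol{\gk}_\sfrak(1)$ under a suitable reduction modulo a power of the uniformizer, contradicting $\boldsymbol{\gk}_\sfrak(1)\neq 0$. On the other hand, the quantitative Kolyvagin descent—which bounds the singular localizations of classes in the dual Selmer group using the derivative classes $\boldsymbol{\gk}_\sfrak(n)$ for $n\in\mathcal{N}'$ of increasing complexity, combined with global (Poitou--Tate) duality—produces an abstract $S$-isomorphism $H^1_{\mathcal{F}_\sfrak}(K,\A_\sfrak^\ac)\cong \Frac(S)/S\oplus M\oplus M$ with $\length_S(M)\le \length_S\big(H^1_{\mathcal{F}_\sfrak}(K,\T_\sfrak^\ac)/S\cdot\boldsymbol{\gk}_\sfrak(1)\big)$; the doubling of $M$ reflects the self-duality together with the fact that the cokernel of the localization map at each Kolyvagin prime injects simultaneously into two factors of the dual Selmer group. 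The main obstacle I anticipate is verifying that the modifications $\{\chi_{n,\ell}\}$ of the finite-singular isomorphism are compatible with the duality step; this is precisely what requires invoking the modified Kolyvagin system framework of \cite{mastella-zerman2025:anticyclotomic} instead of the classical Mazur--Rubin or Howard arguments verbatim.
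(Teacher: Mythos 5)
Your proposal is essentially the paper's argument: the proof reduces to Howard's structural theorem \cite[Theorem 1.6.1]{Howard04:heegner-point-kolyvagin-system} adapted to the modified Kolyvagin system framework of \cite{mastella-zerman2025:anticyclotomic}, with the decisive inputs being Lemma \ref{lem:Greenberg=Bloch-Kato} (to identify $\mathcal{F}_\sfrak$ with $\mathcal{F}_\BK$ and get the required cartesian/torsion-freeness property), the pairing \eqref{eq:pairing} for self-duality, Remark \ref{rk:irreducibility-over-ring-class-fields} for residual irreducibility, and Assumption \ref{ass:big-image} for image-theoretic genericity. The paper simply verifies Howard's Hypotheses H.0--H.5 one by one --- in particular invoking Sah's lemma explicitly for H.2, which you gesture at under ``Galois-theoretic genericity'' --- whereas you package the same checks under ``self-duality'' and ``core rank $1$,'' but the substance and the route are the same.
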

\begin{proof}
   First notice that, if we show that $\T_\sfrak^\ac$ and $\mathcal{F}_\sfrak$ satisfy Hypotheses H.0-H.5 of \cite[§1.3]{Howard04:heegner-point-kolyvagin-system}, we can apply Proposition \ref{prop:howard-theorem-revised} -- that is a variation of \cite[Theorem 1.6.1]{Howard04:heegner-point-kolyvagin-system} -- in order to conclude. Therefore, we are left to show that Hypotheses H.0-H.5 of \textit{loc.~cit.}~hold.

    Hypothesis H.0 is true since $\T^\ac$ is free of rank 2 over $\calR^\ac$. Since $S$ has finite residue field, the residual representation $\bar{\T}_\sfrak^\ac:=\T_\sfrak^\ac/\m_S\T_\sfrak^\ac$ is equivalent to $\T^\ac/\m_{\calR^\ac}\T^\ac$ up to finite base change, which is absolutely irreducible by Remark \ref{rk:irreducibility-over-ring-class-fields}, yielding Hypothesis H.1.

    By Lemma \ref{lem:Greenberg=Bloch-Kato}, we know that $\mathcal{F}_\sfrak=\mathcal{F}_\BK$ on $\T_\sfrak^\ac$, therefore all local conditions are induced by local conditions on $\V^\ac_\sfrak$. This implies that, for every prime $v$ of $K$, the quotient of $H^1(K_v,\T_\sfrak^\ac)$ with $H^1_{\mathcal{F}_{\sfrak}}(K_v,\T_\sfrak^\ac)$ is $\calR_\sfrak^\ac$-torsion free. Therefore, the straightforward generalization of \cite[Lemma 3.7.1]{Mazur-Rubin:Kolyvagin-systems} to the anticyclotomic case implies that \cite[Hypothesis H.3]{Howard04:heegner-point-kolyvagin-system} is satisfied.

    Hypotheses H.4-H.5 of \textit{loc.~cit.}~are a consequence of the existence of the pairing \eqref{eq:pairing}, which induces a perfect, Galois invariant pairing 
    \begin{equation*}
        \T_\sfrak^\ac\times\T_\sfrak^\ac\to S(1),
    \end{equation*}
    exactly as shown in the proof of \cite[Proposition 2.1.3]{Howard04:heegner-point-kolyvagin-system}.
    
    Let now $F=K_\infty(\A^\ac_\sfrak)$, so that $G_F$ acts trivially on $\T^\ac_\sfrak=\varprojlim_j\A_\sfrak^\ac[\mathfrak{m}_S^j]$. This, together with the existence of the pairing above, implies that $F$ contains all the $p$-power roots of unit. Thanks to our big image assumption (Assumption \ref{ass:big-image}), the image of $G_K$ in $\Aut(\bar{\T}_\sfrak^\ac)$ contains a nontrivial scalar matrix. Therefore, we can apply Sah's lemma \cite[Theorem 2.7.(c)]{sah:1968automorphisms} and conclude that $H^1(F/K,\bar{\T}_\sfrak^\ac)=0$, as required by \cite[Hypothesis H.2]{Howard04:heegner-point-kolyvagin-system}.
\end{proof}

\begin{theorem}\label{thm:iwasawa-main-conjecture}
    Assume that the system $\boldsymbol{\gk}^\ac\in \overline{\KS}(\T^\ac,\mathcal{F}_\Gr,\mathcal{P}',\{\chi_{n,\ell}\})$ of Theorem \ref{thm:main-thm} is such that $\boldsymbol{\gk}^{\ac}(1)\ne 0$. Then  
    \begin{itemize}
        \item[(a)] $H^1_{\mathcal{F}_\Gr}(K,\T^\ac)$ is a torsion-free  $\calR^\ac$-module of rank 1.
        \item[(b)] $\Char_{\calR^\ac}(H^1_{\mathcal{F}_\Gr}\big(K,\A^\ac)^\vee_{\tors}\big)\supseteq \Char_{\calR^\ac}\big(H^1_{\mathcal{F}_\Gr}(K,\T^{\ac})/\calR^\ac\cdot \boldsymbol{\gk}^\ac(1)\big)^2$.
    \end{itemize}
\end{theorem}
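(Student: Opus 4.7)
The plan follows the strategy of Fouquet \cite[§5-6]{Fouquet13:Dihedral}, reducing the two-variable big Heegner main conjecture to the one-variable statement packaged in Lemma \ref{lem:Howard-result} via specializations. Since $\calR^\ac$ is a regular UFD whose height-one primes are all principal (Lemma \ref{lem:regularity}), both assertions can be checked after localization at every height-one prime, and it will suffice to treat all but finitely many such primes. For each height-one prime $\mathfrak{p}$ of $\calR^\ac$, let $S_\mathfrak{p}$ denote the integral closure of $\calR^\ac/\mathfrak{p}$ in its field of fractions; this is a discrete valuation ring finite over $\OF$. Call $\sfrak_\mathfrak{p}\colon\calR^\ac\to S_\mathfrak{p}$ the associated specialization and write $\boldsymbol{\gk}_\mathfrak{p}^\ac:=\sfrak_\mathfrak{p}(\boldsymbol{\gk}^\ac)$ for its image under the map \eqref{eq:specialization-kolyvagin-system}. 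Since $\boldsymbol{\gk}^\ac(1)$ is a nonzero element of the $\calR^\ac$-module $H^1(K,\T^\ac)$, its image $\boldsymbol{\gk}_\mathfrak{p}^\ac(1)$ is nonzero for all but finitely many $\mathfrak{p}$; denote this cofinite set of ``good'' primes by $\Sigma_0$.

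At each $\mathfrak{p}\in\Sigma_0$, Lemma \ref{lem:Howard-result} applies to the modified universal Kolyvagin system $\boldsymbol{\gk}_\mathfrak{p}^\ac\in\overline{\KS}(\T_{\sfrak_\mathfrak{p}}^\ac,\mathcal{F}_{\sfrak_\mathfrak{p}},\mathcal{P}',\{\chi_{n,\ell}\})$, yielding that $H^1_{\mathcal{F}_{\sfrak_\mathfrak{p}}}(K,\T_{\sfrak_\mathfrak{p}}^\ac)$ is free of rank one over $S_\mathfrak{p}$ and
\begin{equation*}
\mathrm{length}_{S_\mathfrak{p}}\bigl(H^1_{\mathcal{F}_{\sfrak_\mathfrak{p}}}(K,\A_{\sfrak_\mathfrak{p}}^\ac)^\vee_{\tors}\bigr)\ge 2\,\mathrm{length}_{S_\mathfrak{p}}\bigl(H^1_{\mathcal{F}_{\sfrak_\mathfrak{p}}}(K,\T_{\sfrak_\mathfrak{p}}^\ac)/S_\mathfrak{p}\cdot\boldsymbol{\gk}_\mathfrak{p}^\ac(1)\bigr).
\end{equation*}

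The next step is to transfer this one-variable data back to $\calR^\ac$-level Selmer groups via a control theorem. Standard arguments in Nekov\'a\v r's theory of Selmer complexes \cite[§8]{Nekovar06:selmer-complexes}, combined with Lemma \ref{lem:equality-Selmer-Nekovar} (which identifies the $\mathcal{F}_\sfrak$-Selmer groups with Nekov\'a\v r's extended Selmer groups $\tilde{H}^1_\f$) and Lemmas \ref{lem:easier-description-Greenberg-condition}, \ref{lem:Greenberg=Bloch-Kato} (which describe the Greenberg local conditions under propagation and base change), should show that for all but finitely many $\mathfrak{p}$ the natural base-change maps
\begin{equation*}
H^1_{\mathcal{F}_\Gr}(K,\T^\ac)\otimes_{\calR^\ac}S_\mathfrak{p}\longrightarrow H^1_{\mathcal{F}_{\sfrak_\mathfrak{p}}}(K,\T_{\sfrak_\mathfrak{p}}^\ac),\qquad H^1_{\mathcal{F}_{\sfrak_\mathfrak{p}}}(K,\A_{\sfrak_\mathfrak{p}}^\ac)^\vee\longrightarrow H^1_{\mathcal{F}_\Gr}(K,\A^\ac)^\vee\otimes_{\calR^\ac}S_\mathfrak{p}
\end{equation*}
have kernels and cokernels of uniformly bounded $S_\mathfrak{p}$-length. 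Part (a) is then immediate: the $\calR^\ac$-rank of $H^1_{\mathcal{F}_\Gr}(K,\T^\ac)$ equals the generic $S_\mathfrak{p}$-rank, which is one, and any $\calR^\ac$-torsion element (whose annihilator is contained in only finitely many height-one primes) would descend to a nonzero $S_\mathfrak{p}$-torsion element in some $H^1_{\mathcal{F}_{\sfrak_\mathfrak{p}}}(K,\T_{\sfrak_\mathfrak{p}}^\ac)$ with $\mathfrak{p}\in\Sigma_0$, contradicting freeness. For part (b), since $\calR^\ac$ is a UFD with principal height-one primes, the characteristic-ideal divisibility can be checked at every height-one prime; at $\mathfrak{p}\in\Sigma_0$ the control theorem identifies the $\mathfrak{p}$-local lengths of $H^1_{\mathcal{F}_\Gr}(K,\A^\ac)^\vee_{\tors}$ and of $H^1_{\mathcal{F}_\Gr}(K,\T^\ac)/\calR^\ac\cdot\boldsymbol{\gk}^\ac(1)$ with the corresponding $S_\mathfrak{p}$-lengths, and the displayed inequality transports verbatim.

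The principal obstacle is the control theorem invoked above: proving that specialization modulo a height-one prime commutes, up to bounded error, with the formation of the Greenberg Selmer group. The delicate part is at places $v\mid p$, where one must track how the $F_v^-$-quotient local condition interacts with base change and with the discrepancy between $\mathcal{F}_\Gr$ and $\mathcal{F}_\sfrak$. Assumption \ref{ass:H.stz} is precisely designed to guarantee the vanishing $H^0(K_v,F_v^-(\bar{\T}^\dag))=\{0\}$, which is inherited by all relevant specializations and delivers the triviality exploited in Lemma \ref{lem:equality-Selmer-Nekovar}. Likewise, Assumption \ref{ass:tamagawa-factors-at-N} and Proposition \ref{prop:tamagawa-numbers-at-N} keep the unramified local conditions at places $v\mid N$ well-behaved under base change. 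Together these two assumptions furnish the control needed to extract the sharp characteristic-ideal divisibility.
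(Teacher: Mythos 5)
Your overall strategy — specialize $\T^{\ac}$ to a discrete valuation ring, invoke Lemma \ref{lem:Howard-result}, and transfer back — is the same Fouquet-style strategy the paper uses. But there is a genuine dimensional error that breaks the setup. The ring $\calR^{\ac}=\calR\llbracket\Gamma^{\ac}\rrbracket$ has Krull dimension $3$ (since $\calR$ has dimension $2$), so for a height-one prime $\mathfrak{p}$ of $\calR^{\ac}$ the quotient $\calR^{\ac}/\mathfrak{p}$ has dimension $2$, and its integral closure is a $2$-dimensional normal ring, not a discrete valuation ring finite over $\OF$. Lemma \ref{lem:Howard-result} requires a DVR specialization, i.e.\ a specialization whose kernel has height $2$, and you cannot directly equate the localized length $\length_{\calR^{\ac}_{\mathfrak{p}}}(\cdot)$ at a height-one prime with an $S$-length for such a specialization $S$. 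This is precisely what the density and control-theorem machinery of Fouquet \cite[\S 5--6]{Fouquet13:Dihedral} is designed to navigate, and it is where your argument needs real input that is not supplied by a simple one-prime-at-a-time comparison.

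A second, related issue is the reduction to \emph{cofinitely many} height-one primes. Divisibility of characteristic ideals
\begin{equation*}
\Char_{\calR^{\ac}}\bigl(H^1_{\mathcal{F}_{\Gr}}(K,\A^{\ac})^{\vee}_{\tors}\bigr)\supseteq \Char_{\calR^{\ac}}\bigl(H^1_{\mathcal{F}_{\Gr}}(K,\T^{\ac})/\calR^{\ac}\cdot \boldsymbol{\gk}^{\ac}(1)\bigr)^{2}
\end{equation*}
must hold at \emph{every} height-one prime, and the finitely many excluded primes could be exactly where the divisibility fails. The paper sidesteps this by arguing by contradiction: if the inclusion fails at some height-one prime, Fouquet's density results let one choose a DVR specialization $\sfrak\colon\calR^{\ac}\to S$ with $\boldsymbol{\gk}^{\ac}_{\sfrak}(1)\neq 0$ at which the failure persists, and this contradicts Lemma \ref{lem:Howard-result} after identifying $\tilde{H}^1_{\f}$ with $H^1_{\mathcal{F}_{\sfrak}}$ via Lemma \ref{lem:equality-Selmer-Nekovar}. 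For part (a), the paper does not derive torsion-freeness from specializations at all; it cites Fouquet's Proposition~6.2.(i) (or Perrin-Riou) directly, with only the rank-one statement relying on Lemma \ref{lem:Howard-result} via Fouquet's Proposition~6.5. To repair your argument you would need to (i) replace height-one quotients by genuine DVR specializations, (ii) state and use the contradiction/density argument rather than a one-prime-at-a-time control theorem, and (iii) incorporate the identification of Lemma \ref{lem:equality-Selmer-Nekovar} so that the lengths on both sides are computed for the same Selmer groups as those in Lemma \ref{lem:Howard-result}.
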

\begin{proof}
    The proof of this fact is a direct consequence of the arguments of \cite[§6]{Fouquet13:Dihedral} (see in particular the proof of \cite[Theorem 6.3]{Fouquet13:Dihedral}). We explain in detail how our setting fits in Fouquet's theory.

    (a) By Lemma \ref{lem:equality-Selmer-Nekovar}, we have the equality $H^1_{\mathcal{F}_\Gr}(K,\T^\ac)\cong \tilde{H}^1_\f(K,\T^\ac)$. Therefore, the torsion-freeness of $H^1_{\mathcal{F}_\Gr}(K,\T^\ac)$ is a consequence of \cite[Proposition 6.2.(i)]{Fouquet13:Dihedral} (or of \cite[§1.3.3]{Perrin-Riou:p-adic-L-functions-and-p-adic-representations}, as explained in \cite[Lemma 3.3]{Castella-Wan:iwasawa-main-conjecture-for-GL2}). Moreover, the proof of \cite[Proposition 6.5]{Fouquet13:Dihedral} goes along with Corollary 5.21 of \textit{loc.~cit.}~replaced by our Lemma \ref{lem:Howard-result}, giving the rank-1 result. Notice that the number $r$ that appears in the statement of \cite[Proposition 6.2.(i)]{Fouquet13:Dihedral} is zero thanks to Assumption \ref{ass:H.stz}.

    (b) Assume that $\Char_{\calR^\ac}(H^1_{\mathcal{F}_\Gr}(K,\T^\ac)/\boldsymbol{\gk}^\ac(1))^2\not\subseteq\Char_{\calR^\ac}(H^1_{\mathcal{F}_\Gr}(K,\A^\ac)^\vee_{\tors})$. Exactly as in the proof of \cite[Theorem 6.3]{Fouquet13:Dihedral}, by applying some density results one finds a specialization $\sfrak\colon\calR^\ac\to S$ to a DVR finite over $\OF$ such that $\boldsymbol{\gk}^\ac_\sfrak(1)\ne 0$ and 
    \begin{equation*}
        \length_S\big(\tilde{H}_\f^1(K,\A^\ac_\sfrak)^\vee_{\tors}\big)> 2\cdot\length_S\big(\tilde{H}^1_{\f}(K,\T^\ac_\sfrak)/\boldsymbol{\gk}_\sfrak^\ac(1)\big),
    \end{equation*}
    where $\boldsymbol{\gk}^\ac_\sfrak$ is the image of $\boldsymbol{\gk}^\ac$ with respect to \eqref{eq:specialization-kolyvagin-system}. However, as a consequence of the inequality of Lemma \ref{lem:Howard-result}, we have that
     \begin{equation*}
        \length_S\big(H_{\mathcal{F}_\sfrak}^1(K,\A^\ac_\sfrak)^\vee_{\tors}\big)\le 2\cdot\length_S\big(H_{\mathcal{F}_\sfrak}^1(K,\T^\ac_\sfrak)/\boldsymbol{\gk}_\sfrak^\ac(1)\big).
    \end{equation*}
    This is a contradiction, as $\tilde{H}_\f^1(K,\A^\ac_\sfrak)\cong H_{\mathcal{F}_\sfrak}^1(K,\A^\ac_\sfrak)$ and $\tilde{H}^1_{\f}(K,\T^\ac_\sfrak)\cong H_{\mathcal{F}_\sfrak}^1(K,\T^\ac_\sfrak)$ by Lemma \ref{lem:equality-Selmer-Nekovar}.
\end{proof}

\begin{remark}
    If we assume that $p\nmid\varphi(N)$, the work of Cornut--Vatsal \cite{cornut-vatsal2007:nontriviality} (see also \cite[Corollary 3.1.2]{Howard07:variation-of-Heegner-points-in-Hida-families} and the last lines of \cite[§4.3]{Castella-Wan:iwasawa-main-conjecture-for-GL2}) implies that $\boldsymbol{\gk}^\ac(1)\ne 0$, therefore the above theorem is unconditional in this case. Using a different method, the big Heegner point main conjecture has been recently proven by Castella--Wan (see \cite[Theorem 5.1]{Castella-Wan:iwasawa-main-conjecture-for-GL2}) under some mild hypotheses.
\end{remark}

\appendix

\section{Explicit unramified Galois cohomology}\label{app:unramified-Galois-cohomology}

Let $\ell\ne p$ be two odd primes and let $T$ be an unramified $\Z_p\llbracket \Gal(\bar{\Q}_\ell/\Q_\ell)\rrbracket$-module which is finitely generated as a $\Z_p$-module. Let $F$ be a finite extension of the local field $\Q_\ell$ and let  $\Fr_F\in\Gal(F^{\ur}/F)$ be the Frobenius element, that is a profinite generator of $\Gal(F^{\ur}/F)\cong\hat{\Z}$. Since $T$ is unramified, the action of $\Fr_F$ on $T$ is well defined. Denote by
\begin{equation*}
    \pi_{\ur}\colon \Gal(\bar{\Q}_\ell/F)\twoheadrightarrow \Gal(F^{\ur}/F)
\end{equation*}
the natural projection. Then, to any $g\in \Gal(\bar{\Q}_\ell/F)$ we can attach a unique exponent $u(g)\in\hat{\Z}$ such that $\pi_{\ur}(g)=\Fr_F^{u(g)}$.

Let $x\in Z^1(F,T)$ be a cocycle whose cohomology class $[x]$ lies in $H^1_{\ur}(F,T)$. Then, the cocycle $x$ is inflated by a cocycle $\tilde{x}\in Z^1(F^{\ur}/F,T)$, up to summing a coboundary. Then, for every $g\in \Gal(\bar{\Q}_\ell/F)$ such that $u(g)\in\Z_{>0}$ there is $b_x\in T$ such that
\begin{equation}\label{eq:explicit-description-unramified-cocycles}
    x(g)=\tilde{x}(\Fr_F^{u(g)})+(g-1)b_x=(1+\Fr_F+\dots+\Fr_F^{u(g)-1})a_x+(\Fr_F^{u(g)}-1)b_x,
\end{equation}
where $a_x:=\tilde{x}(\Fr_F)\in T$. In fact, the first equality holds for every $g\in \Gal(\bar{\Q}_\ell/F)$, while the requirement $u(g)\in\Z_{>0}$ is needed only for the second equality. Via the isomorphism 
\begin{equation*}
    H^1(F^{\ur}/F,T)\overset{\cong}{\longrightarrow}T/(\Fr_F-1)T
\end{equation*}
induced by evaluating cocycles at $\Fr_F$ (see \cite[Lemma B.2.8]{Rubin00:euler-systems}), the class $[x]$ corresponds to the class of $a_x$ in the codomain.

\begin{lemma}\label{lem:app-unramified-cocycles}
    Let $x\in Z^1(F,T)$ be an unramified cocycle as above and let $\tau\in \Gal(\bar{\Q}_\ell/F^{\ur})$. Then $\gt x=x$.
\end{lemma}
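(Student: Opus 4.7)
The strategy is to reduce to the case where $x$ is literally an inflated cocycle, using the decomposition $x = \inf(\tilde x) + dy$ with $\tilde x \in Z^1(F^{\ur}/F,T)$ and $y \in T$, and then to check each summand separately. The key observation powering both checks is that $\tau$ acts trivially on $T$: indeed, since $F \supseteq \Q_\ell$ implies $\Q_\ell^{\ur} \subseteq F^{\ur}$, any $\tau \in \Gal(\bar{\Q}_\ell/F^{\ur})$ restricts trivially to $\Q_\ell^{\ur}$ and therefore lies in the inertia subgroup of $G_{\Q_\ell}$, which kills $T$ by the unramifiedness assumption.

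First I would treat the inflated piece $\inf(\tilde x)$. Using the standard convention $(\tau x')(g) := \tau \cdot x'(\tau^{-1} g \tau)$ for the action of $\tau$ on a cocycle, the computation
\begin{equation*}
(\tau\inf(\tilde x))(g) = \tau \cdot \tilde x\bigl((\tau^{-1} g \tau)|_{F^{\ur}}\bigr) = \tau \cdot \tilde x\bigl(g|_{F^{\ur}}\bigr) = \tilde x\bigl(g|_{F^{\ur}}\bigr) = (\inf(\tilde x))(g)
\end{equation*}
uses in its second equality that $\tau|_{F^{\ur}} = 1$ and in its third equality that $\tau$ acts trivially on $T$. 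Next I would check the coboundary piece: for $y \in T$ one has
\begin{equation*}
(\tau \cdot dy)(g) = \tau \cdot \bigl((\tau^{-1} g \tau) y - y\bigr) = g\tau y - \tau y = gy - y = (dy)(g),
\end{equation*}
again using $\tau y = y$ in the last step. Summing, $\tau x = \tau \inf(\tilde x) + \tau (dy) = \inf(\tilde x) + dy = x$.

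There is no real obstacle here: once one has the two inputs (\emph{$\tau$ fixes $T$ pointwise} and \emph{$\tau$ fixes $F^{\ur}$ pointwise}), both a direct verification on the inflated part and on the coboundary part are immediate. The only subtle point worth spelling out is the inclusion $\Q_\ell^{\ur} \subseteq F^{\ur}$, which ensures that the unramifiedness of $T$ as a $G_{\Q_\ell}$-module (rather than merely as a $G_F$-module) is enough to conclude that elements of $\Gal(\bar{\Q}_\ell/F^{\ur})$ act trivially on $T$.
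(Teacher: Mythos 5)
Your proof is correct and takes essentially the same route as the paper. The paper works directly from the single formula $x(g)=\tilde{x}(\Fr_F^{u(g)})+(g-1)b_x$ of \eqref{eq:explicit-description-unramified-cocycles}, first noting $\tau x(\tau^{-1}g\tau)=x(\tau^{-1}g\tau)$ because $\tau$ acts trivially on $T$, and then reducing to $u(g)=u(\tau^{-1}g\tau)$, which it deduces from the normality of $\Gal(\bar{\Q}_\ell/F^{\ur})$; you split the cocycle into the inflated piece and the coboundary piece and verify each separately, replacing the normality argument with the cleaner observation that $\pi_{\ur}$ is a group homomorphism killing $\tau$. The inputs are identical, and your version is, if anything, slightly more transparent about why both pieces are separately $\tau$-fixed.
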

\begin{proof}
    We need to prove that $\gt x(\gt^{-1} g\gt)=x(g)$ for every $g\in \Gal(\bar{\Q}_\ell/F)$. First, notice that $\gt x(\gt^{-1} g\gt)=x(\gt^{-1} g\gt)$, as $T$ is unramified. Then, by the explicit description of \eqref{eq:explicit-description-unramified-cocycles}, it is enough to show that $u(g)=u(\gt^{-1} g\gt)$, i.e.~that $\pi_{\ur}(g)=\pi_{\ur}(\gt^{-1} g\gt)$. The relation $\gt^{-1} g\gt=\gt^{-1} g\gt g^{-1} g$ together with the normality of $\Gal(\bar{\Q}_\ell/F^{\ur})$ in $\Gal(\bar{\Q}_\ell/F)$ yields that $\pi_{\ur}(g)=\pi_{\ur}(\gt^{-1} g\gt)$, as desired.
\end{proof}

\section{Howard's method revised}\label{sec:howards-method-revised}

The purpose of this appendix is to cover a technical step in the proof of Lemma \ref{lem:Howard-result}. In particular, we need to prove an analogue of \cite[Theorem 1.6.1]{Howard04:heegner-point-kolyvagin-system} in our setting. The arguments presented here are an adaptation of those of \cite[§2.12]{mastella-zerman2025:anticyclotomic}, which in turn are profoundly based on those of \cite[§1.6]{Howard04:heegner-point-kolyvagin-system}.

Following the notation of Section \ref{sec:big-heegner-point-main-conjecture}, let $\sfrak\colon \calR^{\ac}\to S$ be a specialization to a discrete valuation ring finite over $\OF$. Let $\mathcal{F}$ be a Selmer structure on $\T^\ac_\sfrak$ and let $\{\chi_{n,\ell}\}$ be a set of automorphisms of $\T^\ac_\sfrak$, indexed on the set of couples $(n,\ell)$ with $\ell$ prime and $n\ell\in\mathcal{N}'$. Here, $\mathcal{N}'$ is the set of square-free products of elements of the set $\mathcal{P}'$ introduced in Definition \ref{dfn:kolyvagin-primes}. For every $i\in\Z_{>0}$, denote by $\mathcal{P}'_i$ the set of primes $\ell\in\mathcal{P}'$ whose Frobenius is conjugated with $\tau_c$ in $\Gal(K(\T^\ac_\sfrak/p^i)/\Q)$. The main result of this section is the following proposition.

\begin{proposition}\label{prop:howard-theorem-revised}
    Suppose that the triple $(\T^\ac_\sfrak,\mathcal{F},\mathcal{P}')$ satisfies Hypotheses H.0--H.5 of \cite{Howard04:heegner-point-kolyvagin-system} and assume that there is $\boldsymbol{\gk}_\sfrak\in \overline{\KS}(\T^\ac_\sfrak,\mathcal{F},\mathcal{P}',\{\chi_{n,\ell}\})$ with the property that $\boldsymbol{\gk}_\sfrak(1)\ne 0$. Then $H^1_{\mathcal{F}}(K,\T_{\sfrak}^\ac)$ is free of rank $1$ and there exists a torsion module $M$ finite over $S$ with
    \begin{equation*}
        \length_S(M)\le \length_S\big(H^1_{\mathcal{F}}(K,\T_{\sfrak}^\ac)/S\cdot\boldsymbol{\gk}_\sfrak(1)\big)
    \end{equation*}
    such that $H^1_{\mathcal{F}}(K,\A_{\sfrak}^\ac)\cong \Frac(S)/S\oplus M\oplus M$.
\end{proposition}
\begin{proof}
    The proof follows word by word the proof of \cite[Theorem 1.6.1]{Howard04:heegner-point-kolyvagin-system}, with just two precautions. 
    
    Firstly, notice that our setting is coherent with the one of \cite[§1.6]{Howard04:heegner-point-kolyvagin-system}, except that our sets of primes $\mathcal{P}'_i$ do not eventually contain \emph{all} primes whose Frobenius is conjugated with $\tau_c$ in $\Gal(K(\T^\ac_\sfrak/p^i)/\Q)$. In \emph{loc.~cit.}, this fact is only used in the proof of \cite[Lemma 1.6.2]{Howard04:heegner-point-kolyvagin-system}. We replace this with Lemma \ref{lem:playing-with-primes}, where we use a finer argument to obtain the same result.

    Secondly, we can safely replace Howard's Kolyvagin system with our modified universal Kolyvagin system $\boldsymbol{\gk}_\sfrak$ in the arguments of \cite[§1.6]{Howard04:heegner-point-kolyvagin-system}. Indeed, one only needs the property that $\loc_\gl(\boldsymbol{\gk}_\sfrak(n\ell)_{i})=0$ if and only if $\loc_\gl(\boldsymbol{\gk}_\sfrak(n)_{i})=0$, which is granted by property (K2) in Definition \ref{dfn:kolyvagin-system}. Here, $n\ell\in\mathcal{N}'$, $\gl=(\ell)$ and $\boldsymbol{\gk}_\sfrak(\cdot)_{i}$ is the reduction of $\boldsymbol{\gk}_\sfrak(\cdot)$ modulo $p^i$. Having this, the proof of \cite[Lemma 1.6.4]{Howard04:heegner-point-kolyvagin-system} goes along word by word.
\end{proof}

Denote by $\bar{\T}^\ac$ the residual $G_\Q$-representation of $\T^\ac$ and, if $M$ is any module with an action of the complex conjugation $\tau_c$, write $M^\pm$ for the submodules of $M$ where $\tau_c$ acts as $\pm 1$, respectively.

\begin{lemma}\label{lem:playing-with-primes}
     Let $c^+\in H^1(K,\bar{\T}^\ac_\sfrak)^+$ and $c^-\in H^1(K,\bar{\T}^\ac_\sfrak)^-$. Then, for every $i\gg 0$ there are infinitely many primes $\ell\in\mathcal{P}_i'$ such that if $c^\pm \ne 0$, then $\loc_\gl(c^{\pm})\ne 0$, where $\gl=(\ell)$ is the prime of $K$ above $\ell$.
\end{lemma}
\begin{proof}
    We assume that both $c^\pm$ are nonzero, the proof of the other cases being analogous. For simplicity, let $T:=\T^\ac_\sfrak$ and $\bar{T}=\bar{\T}^\ac_\sfrak$. Let $F/\Q$ be the extension of \cite[Hypothesis H.2]{Howard04:heegner-point-kolyvagin-system} and let $L_i = K(T/p^i)$, for every $i\in\Z_{>0}$. Since $L_i$ is contained in $F(\mu_{p^{\infty}})$, the restriction
    \begin{equation*}
        H^1(K,\bar{T})\to H^1(L_i,\bar{T})^{\Gal(L_i/K)}\cong\Hom(G_{L_i},\bar{T})^{\Gal(L_i/K)}
    \end{equation*}
    is an injection for any $i\ge1$. Denote by $\psi_i^+$ and $\psi_i^-$ the non-zero homomorphisms of $\Hom(G_{L_i},\bar{T})$ corresponding to $c^+$ and $c^-$. Let $\tilde{L}_i$ be the smallest extension of $L_i$ that is cut out by $\psi_i^+$ and $\psi_i^-$ and is Galois over $\Q$, and let $G_i:=\Gal(\tilde{L}_i/L_i)$.

     Note that $\tau_c$ acts on $G_i$ by conjugation, determining the eigenspaces $G_i^+$ and $G_i^-$ associated with the eigenvalues $1$ and $-1$. Set
    \begin{equation*}
        M_i:=\Hom(G_i,\bar{T})^{\Gal(L_i/K)}
    \end{equation*}
    and call $\bar{\psi}_i^{\pm}$ the elements of $M_i$ determined by $\psi_i^{\pm}$, respectively. There is also an action of $\tau_c$ on $M_i$ and, by hypothesis, $\bar{\psi}_i^{\pm}\in M_i^{\pm}$.

     We now claim that the maps $\bar{\psi}_i^\pm$ are both non-zero on $G_i^+$. This is because each map $\bar{\psi}_i^\pm$ factors through the maximal abelian $p$-primary quotient of $G_i$, which splits as the sum of the two $\pm 1$-eigenspaces for the action of $\tau_c$. 
    Hence, if we suppose that $\bar{\psi}_i^\pm(G_i^+)=0$, then $\bar{\psi}_i^\pm(G_i)=\bar{\psi}_i^\pm(G_i^-)$ is contained in $\bar{T}^\mp$, which is 1-dimensional. Since $\bar{\psi}_i^\pm$ is non-zero and fixed by $\Gal(L_i/K)$, it follows that $\bar{\psi}_i^\pm(G_s)$ spans a non-zero proper $(\calR/\m_\calR)$-submodule of $\bar{T}$ stable under the action of $G_\Q$, which contradicts Assumption \ref{ass:residual-representation-irreducible}. Then, it follows that we can find
    \begin{equation*}
        g\in G_i^+\quad\text{such that}\quad \bar{\psi}_i^\pm(g)\ne 0.
    \end{equation*}
     Notice now that for any $i\ge1$ the (nontrivial) homomorphism $\psi_{i+1}^{\pm}$ coincides with the restriction of $\psi_i^\pm$ to $G_{L_{i+1}}$, so that $\ker \psi_{i+1}^\pm=\ker \psi_i^\pm\cap G_{L_{i+1}}$. Therefore, a careful analysis of the Galois groups involved yields that $L_{i+1}\tilde{L}_i= \tilde{L}_{i+1}\ne L_{i+1}$, hence $G_{i+1}$ is isomorphic to a nonzero subgroup of $G_i$. Since $G_1$ is finite, there is an $i_0$ such that $G_{i+1}\cong G_i$ for all $i\ge i_0$, which yields also that $L_i=L_{i+1}\cap \tilde{L}_i$ and in particular that
    \begin{equation}\label{equ:splitting-Galois-groups-applications}
        \Gal(\tilde{L}_{i+1}/L_i)=\Gal(L_{i+1}/L_i)\times G_i.
    \end{equation}

    Suppose therefore that $i\ge i_0$ and let $\ga\in 1+p^i\Zp$. By Assumption \ref{ass:big-image} and the splitting of \eqref{equ:splitting-Galois-groups-applications}, it follows that there is an element $\gs_\ga\in G_{L_i}$ that fixes $\tilde{L}_i$ and such that the image of $\gs_\ga$ in $\Aut(T_{i+1})$ is the scalar $\ga$. Define $\mathcal{Q}_i(\ga)$ to be the set of primes $\ell$ that are unramified and whose Frobenius $\Fr_\ell$ is conjugated with $\tau_c g \gs_\ga$ in $\Gal(\tilde{L}_{i+1}/\Q)$. Notice that, in particular, $\Fr_\ell|_{\tilde{L}_i}$ is conjugated with $\tau_c g$ and $\Fr_\ell|_{L_{i+1}}$ is conjugated with $\tau_c \sigma_{\alpha}$.

     By Chebotarev's density theorem, the set $\mathcal{Q}_i(\ga)$ is infinite. Our aim is to find a suitable $\ga$ such that $\mathcal{Q}_i(\ga)\subseteq\mathcal{P}_i'$ and such that the primes of $\mathcal{Q}_i(\ga)$ satisfy the claim of the lemma.
    
   Choose a prime $\tilde{\gl}_i$ of $\tilde{L}_i$ above $\ell\in \mathcal{Q}_i(\ga)$ and set $\gl_i=\tilde{\gl}_i\cap L_i$ and $\gl=\gl_i\cap K$. Notice that $\gl$ splits completely in $L_i$, because $\Fr_{\lambda}$ acts trivially on $T_i$. Moreover $\Fr_{\tilde{\gl}_i/\gl_i}=(\Fr_{\tilde{\gl}_i/\ell})^2=\tau_c \, g\,\tau_c\, g=g^2\in G_i$, using the fact that $g\in G_i^+$, and therefore
    \begin{equation*}
        \bar{\psi}^{\pm}(\Fr_{\tilde{\gl}_i/\gl_i})
        =\bar{\psi}^{\pm}(g^2)=2\bar{\psi}^{\pm}(g)\ne 0,
    \end{equation*}
    since $p\ne 2$. This shows that the restriction of $\bar{\psi}^{\pm}$ to $\Gal\big((\tilde{L}_i)_{\tilde{\gl}_i}/(L_i)_{\gl_i}\big)$ is non-zero and hence, since $(L_i)_{\gl_i}=K_\gl$, we obtain that $\loc_\gl c^{\pm}$ is non-zero.

   Eventually, one can argue as in the proof of Lemma \ref{lem:finer-choice-of-primes} to find a suitable $\ga\in 1+p^i\Z_p$ such that $\mathcal{Q}_i(\ga)\subseteq \mathcal{P}_i'$, concluding the proof.
\end{proof}




\printbibliography

@article{Ribet_1985, title={On $l$-adic representations attached to modular forms II}, volume={27}, journal={Glasgow Mathematical Journal}, author={Ribet, Kenneth A.}, year={1985}, pages={185–194}}

@article{buyukboduk2011:lambda,
  title={$\Lambda$-adic Kolyvagin systems},
  author={B{\"u}y{\"u}kboduk, K{\^a}z{\i}m},
  journal={International Mathematics Research Notices},
  volume={2011},
  number={14},
  pages={3141--3206},
  year={2011},
  publisher={OUP}
}

@article{cornut-vatsal2007:nontriviality,
  title={Nontriviality of Rankin-Selberg L-functions and CM points},
  author={Cornut, Christophe and Vatsal, Vinayak},
  journal={London Mathematical Society Lecture Note Series},
  volume={320},
  pages={121},
  year={2007},
  publisher={Cambridge University Press}
}

@article{mastella-zerman2025:anticyclotomic,
  title={On anticyclotomic Euler and Kolyvagin systems},
  author={Mastella, Luca and Zerman, Francesco},
  journal={arXiv preprint arXiv:2505.08710},
  year={2025}
}

@article{sah:1968automorphisms,
  title={Automorphisms of finite groups},
  author={Sah, Chih-han},
  journal={Journal of Algebra},
  volume={10},
  number={1},
  pages={47--68},
  year={1968},
  publisher={Academic Press}
}

@article{Buyukboduk:big-Heegner-point-kolyvagin-system,
  title={Big Heegner point Kolyvagin system for a family of modular forms},
  author={B{\"u}y{\"u}kboduk, K{\^a}z{\i}m},
  journal={Selecta Mathematica},
  volume={20},
  number={3},
  pages={787--815},
  year={2014},
  publisher={Springer}
}

@incollection{Deligne:formes-modulaires,
  title={Formes modulaires et representations $\ell$-adiques},
  author={Deligne, Pierre},
  booktitle={S{\'e}minaire Bourbaki vol. 1968/69 Expos{\'e}s 347-363},
  pages={139--172},
  year={2006},
  publisher={Springer}
}

@book{Eisenbud:commutative-algebra,
  title={Commutative algebra: with a view toward algebraic geometry},
  author={Eisenbud, David},
  volume={150},
  year={2013},
  publisher={Springer Science \& Business Media}
}

@article{Fouquet13:Dihedral,
  title={Dihedral Iwasawa theory of nearly ordinary quaternionic automorphic forms},
  author={Fouquet, Olivier},
  journal={Compositio Mathematica},
  volume={149},
  number={3},
  pages={356--416},
  year={2013},
  publisher={London Mathematical Society}
}

@inproceedings{Ghate05:ordinary-forms,
  title={Ordinary forms and their local Galois representations},
  author={Ghate, Eknath},
  booktitle={Algebra and Number Theory: Proceedings of the Silver Jubilee Conference University of Hyderabad},
  pages={226--242},
  year={2005},
  organization={Springer}
}

@inproceedings{Hida86:Iwasawa-modules,
  title={Iwasawa modules attached to congruences of cusp forms},
  author={Hida, Haruzo},
  booktitle={Annales scientifiques de l'{\'E}cole Normale Sup{\'e}rieure},
  volume={19},
  number={2},
  pages={231--273},
  year={1986}
}

@article{Hida86:Galois-representations,
  title={Galois representations into $\GL_2 (\Z_p [[X]])$ attached to ordinary cusp forms},
  author={Hida, Haruzo},
  journal={Inventiones mathematicae},
  volume={85},
  pages={545--613},
  year={1986},
  publisher={Springer-Verlag}
}

@article{Howard07:variation-of-Heegner-points-in-Hida-families,
  title={Variation of Heegner points in Hida families},
  author={Howard, Benjamin},
  journal={Inventiones mathematicae},
  volume={167},
  pages={91--128},
  year={2007},
  publisher={Springer}
}

@article{Longo-Vigni11:quaternion-algebras-Hida-families,
  title={Quaternion algebras, Heegner points and the arithmetic of Hida families},
  author={Longo, Matteo and Vigni, Stefano},
  journal={Manuscripta mathematica},
  volume={135},
  pages={273--328},
  year={2011},
  publisher={Springer}
}

@book{Matsumura89:Commutative-ring-theory,
  title={Commutative ring theory},
  author={Matsumura, Hideyuki},
  number={8},
  year={1989},
  publisher={Cambridge university press}
}

@book{Nekovar06:selmer-complexes,
  title={Selmer complexes},
  author={Nekov{\'a}{\v{r}}, Jan},
  volume={310},
  year={2006},
  publisher={Soci{\'e}t{\'e} math{\'e}matique de France}
}

@article{Nekovar-Plater:parity-selmer-groups,
  title={On the parity of ranks of Selmer groups},
  author={Nekov{\'a}{\v{r}}, Jan and Plater, Andrew},
  journal={Asian Journal of Mathematics},
  volume={4},
  number={2},
  pages={437--498},
  year={2000},
  publisher={International Press of Boston}
}

@article{Vigni22:shafarevich-tate-groups,
  title={On Shafarevich-Tate groups and analytic ranks in families of modular forms, I. Hida families},
  author={Vigni, Stefano},
  journal={Annali Scuola Normale Superiore - Classe di Scienze},
  pages={2--38},
  year={2022}
}

@book{Mazur-Rubin:Kolyvagin-systems,
  title={Kolyvagin systems},
  author={Mazur, Barry and Rubin, Karl},
  year={2004},
  publisher={American Mathematical Society}
}

@article{Buyukboduk:deformations-of-kolyvagin-systems,
  title={Deformations of Kolyvagin systems},
  author={B{\"u}y{\"u}kboduk, K{\^a}z{\i}m},
  journal={Annales math{\'e}matiques du Qu{\'e}bec},
  volume={40},
  pages={251--302},
  year={2016},
  publisher={Springer}
}

@article{Howard04:heegner-point-kolyvagin-system,
  title={The Heegner point Kolyvagin system},
  author={Howard, Benjamin},
  journal={Compositio Mathematica},
  volume={140},
  number={6},
  pages={1439--1472},
  year={2004},
  publisher={London Mathematical Society}
}

@book{Cox22:primes-of-the-form,
  title={Primes of the Form $x^2+ny^2$: Fermat, Class Field Theory, and Complex Multiplication. with Solutions},
  author={Cox, David A.},
  volume={387},
  year={2022},
  publisher={American Mathematical Soc.}
}

@article{Gross91:Kolyvagins-work,
  title={Kolyvagin’s work on modular elliptic curves},
  author={Gross, Benedict H},
  journal={L-functions and arithmetic (Durham, 1989)},
  volume={153},
  pages={235--256},
  year={1991}
}

@book{Neukirch-Schmidt-Wingberg:cohomology-of-number-fields,
  title={Cohomology of number fields},
  author={Neukirch, J{\"u}rgen and Schmidt, Alexander and Wingberg, Kay},
  volume={323},
  year={2013},
  publisher={Springer Science \& Business Media}
}

@book{Washington:cyclotomic-fields,
  title={Introduction to cyclotomic fields},
  author={Washington, Lawrence C.},
  volume={83},
  year={1997},
  publisher={Springer Science \& Business Media}
}

@article{Colmez:theorie-dIwasawa,
  title={Th{\'e}orie d'Iwasawa des repr{\'e}sentations de de Rham d'un corps local},
  author={Colmez, Pierre},
  journal={Annals of Mathematics},
  pages={485--571},
  year={1998},
  publisher={JSTOR}
}

@book{Rubin00:euler-systems,
  title={Euler systems},
  author={Rubin, Karl},
  number={147},
  year={2000},
  publisher={Princeton University Press}
}

@article{Nekovar92:chow-groups,
  title={Kolyvagin's method for Chow groups of Kuga-Sato varieties},
  author={Nekov{\'a}{\v{r}}, Jan},
  journal={Inventiones mathematicae},
  volume={107},
  number={1},
  pages={99--125},
  year={1992},
  publisher={Springer}
}

@article{Kim-Longo:anticyclotomic-main-conjecture,
  title={Anticyclotomic main conjecture and the non-triviality of Rankin--Selberg $L$-values in Hida families},
  author={Kim, Chan-Ho and Longo, Matteo},
  journal={Journal of Number Theory},
  volume={250},
  pages={183--205},
  year={2023},
  publisher={Elsevier}
}

@book{Bourbaki98:commutative-algebra,
  title={Commutative algebra: chapters 1-7},
  author={Bourbaki, Nicolas},
  volume={1},
  year={1998},
  publisher={Springer Science \& Business Media}
}

@article{Longo-Vigni:kolyvagin-systems-Heegner-cycles,
author = {Matteo Longo and Stefano Vigni},
title = {{Kolyvagin systems and Iwasawa theory of generalized Heegner cycles}},
volume = {59},
journal = {Kyoto Journal of Mathematics},
number = {3},
publisher = {Duke University Press},
pages = {717 -- 746},
keywords = {generalized Heegner cycles, Iwasawa theory, modular forms},
year = {2019},
}

@article{Castella-Wan:iwasawa-main-conjecture-for-GL2,
  title={The Iwasawa Main Conjectures for $\GL_2$ and derivatives of $p$-adic $L$-functions},
  author={Castella, Francesc and Wan, Xin},
  journal={Advances in Mathematics},
  volume={400},
  pages={108266},
  year={2022},
  publisher={Elsevier}
}

@book{Perrin-Riou:p-adic-L-functions-and-p-adic-representations,
  title={$p$-adic $L$-functions and $p$-adic representations},
  author={Perrin-Riou, Bernadette},
  year={2000},
  publisher={American Mathematical Society, Providence, RI}
}

@article{Kolyvagin:euler-systems,
  title={Euler Systems},
  author={Kolyvagin, Victor A.},
  journal={The Grothendieck Festschrift, Volume II},
  volume={87},
  pages={435-483},
  year={1990},
  publisher={Progr. Math. Birch\" auser Boston}
}

@article{Kolyvagin:finiteness-of-E-and-sha,
  title={Finiteness of $E(\Q)$ and $\Sh(E/\Q)$ for a subclass of Weil curves},
  author={Kolyvagin, Viktor A.},
  journal={Izvestiya Rossiiskoi Akademii Nauk. Seriya Matematicheskaya},
  volume={52},
  number={3},
  pages={522--540},
  year={1988},
  publisher={Russian Academy of Sciences, Steklov Mathematical Institute of Russian~…}
}

@article{Kolyvagin-Logachev:finiteness-of-Sha,
  title={Finiteness of the Shafarevich-Tate group and the group of rational points for some modular abelian varieties},
  author={Kolyvagin, Victor A. and Logach{\"e}v, Dmitry Y.},
  journal={Algebra and Analysis},
  volume={1},
  number={5},
  pages={171--196},
  year={1989}
}

@article{Buyukboduk-Sakamoto25:on-the-artin-formalism,
  title={On the Artin formalism for triple product $ p $-adic $ L $-functions},
  author={B{\"u}y{\"u}kboduk, K{\^a}z{\i}m and Sakamoto, Ryotaro},
  journal={arXiv preprint arXiv:2501.06541},
  year={2025}
}

@misc{Zerman24:PhD-thesis,
  title={Quaternionic Kolyvagin systems and Iwasawa theory for Hida families},
  author={Zerman, Francesco},
  year={2024},
  note={Ph.D. Thesis, Università di Genova},
  publisher={Universit{\`a} degli Studi di Genova}
}

@article{Conti-Lang-Medvedovsky23:Big-images,
  title={Big images of two-dimensional pseudorepresentations},
  author={Conti, Andrea and Lang, Jaclyn and Medvedovsky, Anna},
  journal={Mathematische Annalen},
  volume={385},
  number={3},
  pages={1--95},
  year={2023},
  publisher={Springer}
}

@article{Skinner-Urban:Iwasawa.main-conjecture-for-GL2,
  title={The Iwasawa main conjectures for $\GL_2$},
  author={Skinner, Christopher and Urban, Eric},
  journal={Inventiones mathematicae},
  volume={195},
  pages={1--277},
  year={2014},
  publisher={Springer}
}

\end{document}